\documentclass[11pt]{article}
\usepackage[margin=1.3in]{geometry}
\usepackage{graphicx} % Required for inserting images
\usepackage{authblk}  % For better author formatting
\usepackage{hyperref} % For clickable email links
\usepackage[dvipsnames]{xcolor}
\usepackage{esvect}
\usepackage[ruled,vlined,linesnumbered]{algorithm2e}
\SetAlgoNoLine
\usepackage{graphicx}      
\usepackage{amsmath,amssymb, amsfonts,amsthm}

\usepackage{subcaption}
\usepackage{standalone}
\usepackage{hyperref}   
\usepackage{color}% for hyperlinks
\newtheorem{Theorem}{Theorem}
\newtheorem{Lemma}{Lemma}
\newtheorem{Remark}{Remark}
\newtheorem{proposition}[Theorem]{Proposition} 
\usepackage{tikz}
\usetikzlibrary{arrows.meta,calc}
\usepackage{float}
\usepackage{todonotes}
\usepackage[dvipsnames]{xcolor}
\usepackage{enumitem}
\usepackage{url}
\usepackage{array}
\usepackage{caption}
\usepackage{nicematrix}
\usepackage{arydshln}  
\usepackage{array} % Required for tabular
\usepackage{tikz}
\usepackage{pgfplots}
\pgfplotsset{compat=1.8}
\usepackage{pgfplotstable}
\usetikzlibrary{3d, calc}
\usepackage{placeins}   % Ensures figures do not float past barriers
\usepackage{cases}
\pgfplotsset{compat=1.17}

\usepackage[
    backend = biber,
    doi = false,
    url = false,
    isbn = false,
    sorting = none,
    giveninits = true,  % modern replacement for firstinits
    maxnames = 99,
    hyperref = true,
    block = none
]{biblatex}

\renewbibmacro{in:}{}

\bibliography{reference}

\DeclareMathOperator{\diag}{diag}

\newcommand{\R}{\mathbb{R}}

\newcommand{\rint}{\operatorname{int}}

\newcommand{\xc}[1]{\vspace{.1cm}}

\newcommand{\dist}{\operatorname{dist}}

\DeclareMathOperator{\sgn}{sgn}
\DeclareMathOperator{\nll}{null}

\makeatletter
\def\blfootnote{\xdef\@thefnmark{}\@footnotetext}
\makeatother

\title{On Coexistence of Conservative Replicator Dynamics with Four Strategies}

\begin{document}
\author{ Haoyu Yin$^*$, \quad Xudong Chen, \quad and \quad Bruno Sinopoli 
}
\date{}
\maketitle
\blfootnote{H.Yin and X.Chen are with Department of Electrical and Systems Engineering, Washington University in St. Louis. B.Sinopoli is with School of Electrical, Computer and Energy Engineering, 
Arizona State University. Emails: \texttt{\{h.yin,  cxudong\}@wustl.edu}, \texttt{\{bruno.sinopoli\}@asu.edu}. }

\blfootnote{$^*$Corresponding author: H.Yin}
%\blfootnote{This work was supported by NSF ECCS-2426017 and by DARPA HR0011-25-3-0211.}

\begin{abstract}                          % Abstract of not more than 200 words.
In this paper, we study four-strategy conservative replicator dynamics induced by constant payoff matrices. 
We establish necessary and sufficient conditions for 
coexistence to occur by associating the payoff matrix with its digraph, revealing exactly five distinct digraph classes governing the global behavior. We further show that, whenever the dynamics is coexistent, every non-equilibrium trajectory in the relative interior of the simplex is a Lyapunov-stable periodic orbit.
Together with the classification of the boundary phase portraits, these results provide a complete characterization of the global dynamics in the four-strategy case with coexistence.
\end{abstract}

% \end{frontmatter}

\section{Introduction}\label{sec:intro}
Over the past decades, replicator dynamics has emerged as a fundamental model in evolutionary game theory for describing the continuous evolution of strategy shares in a large population driven by payoff differences~\cite{AkinLosert1984,HofbauerSigmund1998}. Let \(x=(x_1,\dots,x_n)\) denote the population state, evolving on the standard simplex: 
$$\Delta^{n-1}:=\left \{x\in\mathbb{R}^n_{\ge 0} \mid \sum_{i=1}^n x_i=1 \right \}.$$
Given a payoff matrix \(A\in\mathbb{R}^{n\times n}\) that encodes the fitness/payoff of each strategy against the current population state, the replicator dynamics is given by
\begin{equation}\label{eq:intro_rep}
\dot x_i(t) = x_i(t)\left ((Ax(t))_i - x^\top(t) A x(t) \right ),
\end{equation}
for all $i=1,\dots,n$. 
In the dynamics, the growth rate of each strategy is determined by its excess payoff relative to the population average~\cite{Sandholm2010}. In particular, strategies performing better than average increase instantly in proportion, whereas those performing worse decrease. This model has become a standard framework for studying strategic adaptation and long-run selection~\cite{HofbauerSigmund2003}, as it links the geometry of the simplex with the incentive structure induced by the game specified by payoff matrix $A$.

% {\color{red} add more references; review papers}
{ 
Replicator dynamics has been widely appreciated in population games for its use as an aggregate model for boundedly rational learning among interacting agents and has been extensively investigated in the literature (see, e.g.,~\cite{HofbauerSigmund2003,mertikopoulos2016learning,como2020imitation,hankins2026population,yin2026zero,quijano2017role} and references therein). %This viewpoint has been used across population games and related application domains. 
In applications such as resource-allocation problems, $x_i(t)$ can represent the fraction of agents selecting a certain task, route, or resource at time~$t$, while payoffs encode congestion, performance, or utility feedback~\cite{quijano2017role,barreiro2016distributed,ramirez2010population}. In epidemic game models, strategy shares can represent the adoption of protective behaviors such as vaccination, quarantine, or social distancing, with payoffs determined by the trade-off between infection risk and behavioral cost~\cite{amaral2021epidemiological,martins2023epidemic}. More broadly, in networked multi-population settings, each population (or community) may carry its own strategy distribution, and these distributions evolve through payoff-based revision rules while being coupled through inter-population interactions and subject to constraints on shared resources, mobility, and network topology~\cite{riehl2018survey,como2020imitation,govaert2022population}.

We also mention a few models that extend the standard replicator dynamics. 
The so-called co-evolving evolutionary games~\cite{gong2022limit,skoulakis2021evolutionary} study the interplay between the population's strategic behavior and the surrounding environment by introducing a new variable, namely, the environment state, and investigate the dynamic behavior of the coupled system.   
%allow the population state and an environmental state to evolve jointly, so that the payoff landscape changes in response to the current population distribution. 
%Networked evolutionary dynamics incorporate graph-mediated interactions among agents, populations, or communities, making collective behavior depend on both payoff incentives and interconnection topology~\cite{riehl2018survey}. 
Next, we mention higher-order learning dynamics, which enrich payoff-based revision by allowing strategy updates to depend on auxiliary learning states, such as accumulated payoff information~\cite{gao2020passivity,laraki2013higher}, memory~\cite{fox2013population}, or filtering variables~\cite{mabrok2016passivity,park2019payoff}, rather than only on the instantaneous payoff vector. We further mention the replicator-mutator dynamics that incorporates mutation, exploration, or noisy strategy revision. This model has been used for scenarios where agents not only imitate successful strategies but may also switch to alternative strategies through error, experimentation, or imperfect transmission, such as language evolution, behavior adoption in social networks, and decision-making in networked multi-agent systems~\cite{pais2012hopf}.
}

{  
A complementary line of work treats the evolutionary process as a system to be controlled or planned. Rather than focusing on characterizing the behavior of the dynamics, these studies introduce interventions that modify incentives, payoff feedback, or interaction structure in order to steer the population toward a desired state. In networked evolutionary games, control inputs can be applied to selected agents or interactions to influence the collective state of the population~\cite{riehl2016towards,riehl2018survey}. Adaptive-gain mechanisms instead adjust the strength of payoff feedback along the evolution, thereby shaping the rate and direction of adaptation under replicator dynamics~\cite{zino2023adaptive}. More recently, social-planning formulations have treated relative fitness as a design variable, allowing a planner to influence how boundedly rational agents learn and to align the resulting population behavior with a system-level objective~\cite{siththaranjan2025social}. 
}
% { 
% The replicator dynamics~\eqref{eq:intro_rep} also admits a game-theoretic interpretation through the lens of population games~\cite{Sandholm2010}. In this interpretation, $x_i$ represents the fraction of a large population currently using strategy $i$, and $A$ specifies the payoff landscape faced by boundedly rational agents. Rather than solving a global optimization problem, agents revise their strategies through simple payoff-based learning or imitation rules.
% The replicator dynamics is one canonical aggregate model generated by such revision protocols: strategies with above-average payoff gain population share, while those with below-average payoff lose share. 
% This viewpoint connects the geometric analysis of~\eqref{eq:intro_rep} to a broader class of learning dynamics in population games.
% }

\subsection{Scope of our work and main contributions}
In this paper, we will focus on the class of {\it conservative} replicator dynamics.   
%extends the zero-sum games~\cite{AkinLosert1984,yin2026zero} to a broader family of non-dissipative systems}. 
Such dynamics admit conserved quantities and can exhibit recurrent behavior. A natural question in this context is whether all strategies persist in the long run. This is captured by the notion of {\it coexistence}, which requires that every trajectory of~\eqref{eq:intro_rep}, whose initial condition $x(0)$ is in $\rint \Delta^{n-1}$, i.e., the relative interior of $\Delta^{n-1}$, is asymptotically bounded away from the boundary $\partial \Delta^{n-1}$. More precisely, the dynamics~\eqref{eq:intro_rep} is coexistent if for every $x(0)\in\rint \Delta^{n-1}$, there exists an $\varepsilon>0$ such that 
$$
\lim_{t\to\infty} \inf x_i(t)\ \ge\ \varepsilon,\quad \mbox{for all } i=1,\dots,n.
$$

Understanding when coexistence can arise is a fundamental problem in evolutionary game theory and population dynamics, %as it formalizes robust coexistence: 
as it rules out extinction of initially present strategies as time evolves. 
%and ensures that the asymptotic dynamics does not collapse onto lower-dimensional faces of the simplex. 
%This question is further motivated by the broad range of conservative and related population-dynamic models 
Its importance has also been widely appreciated in many applications, including condensate selection in driven-dissipative bosonic systems~\cite{knebel2015evolutionary}, graph-structured evolutionary dynamics~\cite{madeo2014game}, and distributed population-dynamics methods for optimization and control synthesis~\cite{barreiro2016distributed,quijano2017role}. 
Necessary and/or sufficient conditions for coexistence have been obtained in the literature. For example, in~\cite{knebel2013coexistence}, the authors have shown that dynamics~\eqref{eq:intro_rep} with skew-symmetric $A$ is coexistent if and only if the null space of $A$ intersects $\rint \Delta^{n-1}$, i.e., the set of all strictly positive probability vectors. A literature review will be provided shortly.  

However, as seen from its definition, coexistence alone does not tell how each individual trajectory of the dynamics~\eqref{eq:intro_rep} evolves over time, unless the dynamics~\eqref{eq:intro_rep} is of a special type, such as gradient flow whose stable equilibria belong to the interior of $\Delta^{n-1}$, or,   
the state space of the dynamics is two-dimensional (specifically, $n = 3$ and the replicator dynamics evolves on $\Delta^2$). For the latter case, one can use the celebrated Poincar\'e-Bendixson theorem to conclude that if the system is coexistent, then each trajectory with $x(0)\in \rint \Delta^2$ converges to either an equilibrium point or to a periodic orbit. 
The problem quickly becomes intractable as $n$ increases. The literature is very sparse for characterizing phase portraits of coexistent replicator dynamics for $n \geq 4$. 

This paper deals with conservative (and hence, non-gradient) replicator dynamics~\eqref{eq:intro_rep} with four strategies (i.e., $n = 4$). We provide a necessary and sufficient condition for coexistence of such replicator dynamics and, moreover, a complete characterization of the phase portrait of any such system.  
As shown in~\cite{HofbauerSigmund1998}, every payoff matrix $A$ in this class is dynamically equivalent, up to column shifts, to a skew-symmetric matrix. %(we note here that every polynomial replicator dynamics can arise as a population game with skew-symmetric payoff, which.
It therefore suffices to characterize the dynamics induced by skew-symmetric matrices $A\in \R^{4\times 4}$ as
\begin{equation}
\label{eq:zsreplicator}
    \dot x_i(t) =x_i(t)(Ax(t))_i,\quad \mbox{for all } i=1,2,3,4,
\end{equation}
where $x^\top(t) Ax(t)=0$. 
Note that the above system can be translated to a three-dimensional antisymmetric Lotka-Volterra system~\cite{AkinLosert1984}. %, which arise naturally in four-species cyclic competition. 
The state evolution describes the change of population shares of four competing types, and can exhibit rich geometric behavior, including saddles, spirals, and straight-line trajectories as shown 
in~\cite{durney2011saddles}.
{ Such systems arise in closed biochemical reaction networks with conserved total concentration, where the four coordinates represent the concentrations of four molecular states connected by a cyclic sequence of one-step transitions, commonly described as a macromolecular ``turning wheel''; see~\cite{doi:10.1073/pnas.85.16.5923,murza2010global}. %Conservation of total concentration places the normalized state on $\Delta^3$, with periodic interior orbits representing persistent oscillations among the four states and boundary convergence indicating the depletion of one or more states.
} The dynamics also appears through the standard von Neumann symmetrization of a $2\times 2$ two-player zero-sum game, such as matching pennies, making it a canonical benchmark for cyclic, non-convergent learning dynamics~\cite{papadimitriou2016nash,biggar2024attractor}.

% In the applications discussed above, the four coordinates can represent four competing resources, behaviors, or strategic choices. The conservative four-strategy model isolates the dynamics generated by cyclic payoff interactions before environmental feedback, network coupling, mutation, learning memory, or control interventions are added. The characterization of its phase portrait therefore provides a baseline for determining whether cyclic incentives sustain coexistence and recurrent behavior or drive the population toward a lower-dimensional strategy set.}
%In these contexts, the four-dimensional system acts as a foundational network motif: it has the minimal state space capable of sustaining coupled transition loops and non-adjacent cross-communications~\cite{lutz2013intransitivity}. 

%\subsection{Main contributions}
As mentioned earlier, a necessary condition for coexistence is that $A$ has a nontrivial null space, i.e., $\det(A) = 0$. %Thus, in this paper, we focus on the case where $\det(A) = 0$. 
Since $A$ is a $4$-by-$4$ skew-symmetric matrix, either $A = 0$ or the rank of $A$ is $2$. The former case is trivial, and we will address the latter. 
The main contributions of this work are threefold: 
\begin{enumerate}
\item First, in Section~\ref{sec:charc_coexistence}, we establish a necessary and sufficient condition on $A$ for coexistence of~\eqref{eq:zsreplicator} by introducing an appropriate digraph $\mathcal{G}_A$ induced by the sign pattern of $A$. We show that~\eqref{eq:zsreplicator} is coexistent if and only if $\det A = 0$ and the graph $\mathcal{G}_A$ has a directed cycle. 
%We present this result and its proof in .
% the payoff matrix \(A\) with its directed graph \(\mathcal{G}_A\). This yields exactly five digraph isomorphism classes of the sign patterns for the entries in payoff matrix. 

\item Then, in Section~\ref{sec:sys_interior}, we establish the fact that if the system~\eqref{eq:zsreplicator} is coexistent, then any $x$ in $\rint \Delta^3$ is either an equilibrium point or belongs to a stable periodic orbit. 
We develop a new approach to prove the result. Specifically, we introduce a one-parameter family of invariant functions (i.e., functions that are forward invariant along the dynamics), whose iso-level sets are diffeomorphic to $2$-spheres. Then, we show that 
every trajectory $x(t)$ of the replicator dynamics~\eqref{eq:zsreplicator}, with $x(0)$ in $\rint \Delta^3$, belongs to the iso-level sets of two invariant functions (dependent on $x(0)$) which intersect each other transversally. The preimage theorem then implies that every connected component of the intersection is diffeomorphic to a circle $S^1$, so $x(t)$ belongs to a periodic orbit. The stability of such periodic orbits is established by constructing a suitable Lyapunov function.

\item Finally, in Section~\ref{sec:sys_boundary}, we characterize the phase portrait of dynamics~\eqref{eq:zsreplicator} on $\partial \Delta^3$. Note that $\Delta^3$ has four faces, each of which is the standard $2$-simplex. The dynamical behavior of replicator dynamics on $\Delta^2$ is very well known. 
In fact, seminal works by Zeeman~\cite{Zeeman1980} and Bomze~\cite{Bomze1983,Bomze1995}
have classified all possible phase portraits for~\eqref{eq:intro_rep} with general $3$-by-$3$ payoff matrices $A$. We do not intend to replicate existing efforts, but rather the contribution of Section~\ref{sec:sys_boundary} is to enumerate all possible combinations of these phase portraits for the four faces of $\Delta^3$. Amongst others, we relate the digraph $\mathcal{G}_A$ to the classification problem, showing that the allowable graph topologies and the possible combinations one-to-one correspond to each other.  
% In particular, we have classified all the phase portraits for conservative coexistent replicator dynamics with $n=4$.

% the boundary dynamics for each of the five digraph isomorphism classes, and show that systems with isomorphic induced digraphs have the same qualitative behavior on \(\Delta^3\).
\end{enumerate}

\subsection{Literature review}

References~\cite{Weibull1995,HofbauerSigmund1998,Sandholm2010,HofbauerSigmund2003} have established dynamical properties of replicator dynamics, including forward invariance of the simplex and its faces, equilibrium and stability notions, and Lyapunov-based convergence results for important subclasses such as potential games~\cite{Sandholm2010}. 

In the three-strategy case, the state space is planar, which makes it possible to combine the properties of replicator dynamics with classical two-dimensional dynamical-systems techniques to derive an explicit phase-portrait classification; see~\cite{Bomze1983,Bomze1995}. % The classification of phase portraits on the $2$-simplex was initiated and completed in~\cite{Zeeman1980}, revealing 33 generic phase portraits (19 up to flow reversal), while non-generic cases may contain families of periodic orbits.
{ In particular, the authors of~\cite{BoonePiliouras2019WINE} have shown that, for three-strategy zero-sum replicator dynamics admitting an interior Nash equilibrium, every trajectory starting from an interior point either remains at an equilibrium or is periodic. Their proof uses a conserved quantity to keep non-equilibrium interior trajectories away from the boundary and then applies the Poincar\'e-Bendixson theorem~\cite{schwartz1963generalization} to characterize the limit sets as periodic orbits.
}

% any recurrent replicator dynamics with three strategies,++++++ the recurrence is exactly a periodic cycle 
% The proof built upon Poincar\'e-Bendixson Theorem~\cite{schwartz1963generalization}. }
%However, these techniques do not extend directly to the case \(A\in\mathbb{R}^{4\times4}\), where the state space is three-dimensional.

For conservative and zero-sum replicator systems with more than three strategies, the literature has mainly focused on the existence of first integrals and the associated recurrent behavior of the dynamics~\cite{AkinLosert1984,mertikopoulos2018riemannian}, rather than a complete geometric description of trajectories.
In particular, conservative replicator dynamics is closely related to Hamiltonian and volume-preserving systems, and recurrence is often established through invariant quantities together with the Poincar\'e recurrence theorem
\cite{barreira2006poincare}. However, 
recurrence alone may not determine the orbits of the dynamical systems. It remains largely open whether there exist periodic orbits. This issue has also been raised in recent work in online learning and algorithmic game theory, where recurrent and cyclic behavior in zero-sum games plays a central role \cite{cheung2019vortices,MertikopoulosPapadimitriouPiliouras2018,DaskalakisPanageas2019}.

{ 
Perhaps the closest work to ours is~\cite{murza2010global}, where the authors have analyzed the global dynamics of a specific three-dimensional cyclic Lotka-Volterra family, which can be transformed into four-strategy replicator dynamics through a linear change of variables. 
In the notation of the present paper, their family corresponds to a special case of ours, namely, Case~(\ref{subfig:V}) in Fig.~\ref{fig:combined_cases}. 
Their result (Theorem 1.1) indicates that the interior of the simplex $\Delta^3$ is foliated by periodic orbits around an equilibrium segment.  
The proof goes by finding certain first integrals that confine the flows to two-dimensional invariant level surfaces, which then enables the use of the Poincar\'e-Bendixson theorem. 
% Given the specific payoff structure, the dynamics can be reduced to a planar stting where the Poincar\'e-Bendixson Theorem can be  applied. 
% In particular, 
Their analysis depends on the specific structure of the payoff matrix and cannot be extended to the other cases (i.e., Cases~(\ref{subfig:I})-(\ref{subfig:IV})) in our paper.  
%While~\cite{murza2010global} gives a global analysis of this particular subfamily, our results cover all four-strategy conservative replicator dynamics whose induced digraph contains a directed cycle and provide a unified phase-portrait classification on $\Delta^3$. 

For other relevant works, we mention~\cite{pais2012hopf,gong2022limit}, in which the authors have used the Hopf bifurcation to establish existence of periodic orbits and to identify parameter regimes where stable limit cycles can arise in the replicator-mutator dynamics. 
%from the mutation terms, network coupling, or other perturbations of the classical replicator dynamics~\cite{pais2012hopf}. 
Their models differ significantly from ours. By the nature of bifurcation theory, their analysis is local and parameter-dependent and thus, does not address the problem of this paper. %For evolutionary dynamics with dynamic payoff models, passivity has been used to view the payoff mechanism and the population dynamics as interconnected input-output systems. Under suitable passivity or dissipativity conditions, storage-function arguments yield stability and convergence of the closed-loop system to equilibrium~\cite{park2018passivity,park2019payoff}. In co-evolving evolutionary games, the population state is coupled to an environmental state where Lyapunov and bifurcation-based methods have been used to characterize stability, limit cycles, and more complicated asymptotic behavior generated by this feedback loop~\cite{gong2022limit}. 
%These tools, however, do not directly yield a global characterization for the dynamics we study in this paper.
}

Finally, we mention a line of research that connects the long-time behavior of zero-sum dynamics to graph topology.
Starting from the characterization of coexistence via a strictly positive kernel of the skew-symmetric payoff matrix~\cite{knebel2013coexistence},
the authors in~\cite{geiger2018topologically} have identified network topologies that enforce coexistence independently of the precise interaction strengths for the replicator dynamics with an odd number of strategies,  and related the condition to the Pfaffian orientations of the payoff matrix~\cite{geiger2018topologically}.
In parallel, graph-based operations have been used to generate integrable families of zero-sum replicator systems, which have enough first integrals to make the dynamics explicitly analyzable:~\cite{EvripidouKassotakisVanhaecke2022} developed a graph-theoretic classification in terms of irreducible weighted graphs, morphisms, and decloning operations,
while~\cite{PaikGriffin2023} has shown that suitable graph embeddings produce new completely integrable skew-symmetric replicator systems.
More recently, response graphs~\cite{BiggarShames2023SciRep} and preference graphs~\cite{biggar2025sink} have been used to characterize long-run behavior in game dynamics by relating graph-theoretic sink components~\cite{papadimitriou2016nash} to attractors and chain-recurrent sets. However, a complete characterization of coexistence and of the resulting global dynamics remains unavailable.

The remainder of this paper is organized as follows. We first gather below the notations required for the analysis. Section~\ref{sec:charc_coexistence} establishes necessary and sufficient conditions for the coexistence of four-strategy conservative replicator dynamics (Section~\ref{ssec:iff_coexistence}) and introduces five digraph isomorphism classes to reveal exactly how the null space of $A$ intersects the simplex (Section~\ref{ssec:finer_class}).  Section~\ref{sec:sys_interior} investigates behavior in $\rint \Delta^3$, rigorously proving that any non-equilibrium interior trajectory is a stable periodic orbit. Section~\ref{sec:sys_boundary} provides a complete classification of phase portraits on the boundary based on the isomorphism classes of the digraph. The paper ends with a summary and outlook in Section~\ref{sec:conclusion}.

% \subsection{Why $4\times4$ skew-symmetric games are the first nontrivial frontier}

\subsection{Notation}
Let $A\in\mathbb{R}^{4\times 4}$ be a skew-symmetric matrix, with entries denoted by $a_{ij}=-a_{ji}$ for $i,j\in\{1,2,3,4\}$, and $a_{ii}=0$. The determinant of $A$ is given by
\begin{equation*}
\det(A)=(a_{12}a_{34}-a_{13}a_{24}+a_{14}a_{23})^2.
\end{equation*}
Let $\mathbf{1}\in\mathbb{R}^4$ denote the vector of all ones. The standard simplex $\Delta^3$, whose vertices correspond to the standard basis vectors $e_i$, is embedded in the affine hyperplane $H:=\{x\in\mathbb{R}^4\mid\mathbf{1}^\top x=1\}$. We denote its relative interior by $\rint \Delta^3:=\{x\in\Delta^3\mid x_i>0\text{ for all }i\}$, and its boundary by $\partial\Delta^3:=\{x\in\Delta^3\mid x_i=0\text{ for some }i\}$. Both the simplex and its boundary are forward invariant under~\eqref{eq:zsreplicator}. Finally, we denote the set of interior equilibria by
$$K:=\nll(A)\cap\rint \Delta^3 .$$

% \begin{remark}\label{rem:faces}
% If $\mathcal{I}$ is unbounded toward $0$ and $+\infty$, then the endpoints correspond to the limits $t \to 0$ and $t \to \infty$, lying on the faces $\{x_4=0\}$ and $\{x_3=0\}$, respectively. Generally (when $\mathcal{I}$ is bounded), endpoints may occur at finite $t$ where $x_1=0$ or $x_2=0$. Lemma~\ref{lem:segment} holds regardless of which faces contain the endpoints.
% \end{remark}
\section{Characterization of Coexistence}
\label{sec:charc_coexistence}
It is known that zero-sum replicator dynamics is coexistent if and only if $K$ is non-empty. For the dynamics in~\eqref{eq:zsreplicator}, this condition translates into strong structural constraints. In this section, we reformulate these constraints in graph-theoretic terms, first characterizing coexistence and then classifying all digraph topologies that give rise to coexistence for the dynamics.

\subsection{Necessary and sufficient condition for coexistence}
\label{ssec:iff_coexistence}

% {  graph using mathcal, geometry using normal, change the $K$ to $K$ }
% \subsection{}
In this section, we provide a necessary and sufficient condition for the replicator dynamics~\eqref{eq:zsreplicator}, with $A$ skew-symmetric, to be coexistent. To this end, we associate to $A$ the simple digraph $\mathcal{G}_A = (\mathcal{V}, \mathcal{E})$ on $4$ nodes whose node set is $\mathcal{V}=\{1,2,3,4\}$ and whose edge set is 
$$
\mathcal{E}:=\{(i,j)\in \mathcal{V}\times \mathcal{V} \mid  a_{ij}<0\}.
$$  
Note, in particular, that for any two distinct nodes $i$ and $j$, there exists at most one edge incident to both of them. Thus, if $\mathcal{C}$ is a cycle of $\mathcal{G}_A$, then the length of $\mathcal{C}$ is either $3$ or $4$. 
We now state the main result of this section:

\begin{Theorem}\label{thm:main2}
Let $A\in \R^{4\times 4}$ be a nonzero skew-symmetric matrix and $\mathcal{G}_A$ be the associated digraph. 
Then, the replicator dynamics~\eqref{eq:zsreplicator} is coexistent if and only if the following two items hold: 
\begin{enumerate}
% %[label=\roman*)]
\item $\det(A)=0$;
\item $\mathcal{G}_A$ contains a cycle.
\end{enumerate}
\end{Theorem}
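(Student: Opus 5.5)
The plan is to use the fact quoted at the start of this section: zero-sum (skew-symmetric) replicator dynamics is coexistent if and only if $K=\nll(A)\cap\rint\Delta^3\neq\emptyset$, following~\cite{knebel2013coexistence}. With that, the theorem becomes purely linear-algebraic. We must show that $\nll(A)$ contains a strictly positive vector if and only if $\det A=0$ and $\mathcal{G}_A$ has a cycle.

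\emph{Necessity of $\det A=0$.} A nonzero kernel is needed, so $\det A=0$. Since $A\neq 0$ is skew-symmetric, this forces $\operatorname{rank}A=2$, so $\nll(A)$ is two-dimensional.

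\emph{Necessity of a cycle.} Suppose $\mathcal{G}_A$ is acyclic. Then it has a sink node $i$, i.e.\ no edge leaves $i$, so $a_{ij}\ge 0$ for all $j$. Because $A\neq0$, I should rather pick a node with an incident edge. The cleaner route is to take a topological order of the acyclic digraph and choose the last node $i$ among those with at least one incident edge. This $i$ has no out-edges and at least one in-edge $(k,i)$, hence $a_{ik}>0$. For $x>0$ in the kernel we get $0=(Ax)_i=\sum_j a_{ij}x_j\ge a_{ik}x_k>0$, a contradiction. This is just a Gordan/Farkas-type sign argument.

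\emph{Sufficiency.} Assume $\operatorname{rank}A=2$ and that there is a cycle. For a rank-2 skew $4\times4$ matrix, the Pfaffian vanishes, $a_{12}a_{34}-a_{13}a_{24}+a_{14}a_{23}=0$. The kernel can then be written explicitly via cofactor-type vectors. For instance, $v^{(4)}=(a_{23},-a_{13},a_{12},0)^\top$ satisfies $Av^{(4)}=0$ exactly because of the Pfaffian identity, since row 4 gives $a_{41}a_{23}-a_{42}a_{13}+a_{43}a_{12}=-\mathrm{Pf}=0$. There are analogous vectors $v^{(l)}$, each supported on the complement of one index $l$. A positive kernel vector exists if and only if the kernel (a 2-dimensional subspace) meets the open orthant. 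By Gordan's alternative, this fails if and only if there is $y\ge0$, $y\neq0$, in $\operatorname{range}(A^\top)=\operatorname{range}(A)$, i.e.\ $y=Az$.

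I would handle this case by case on the cycle. For a 3-cycle, say $1\to2\to3\to1$, the entries $a_{12},a_{23},a_{31}$ are all negative, so $v^{(4)}=(a_{23},a_{31},a_{12},0)$ has all three leading entries of one sign, giving a kernel vector that is positive on $\{1,2,3\}$ and zero at $4$. I then need a second kernel vector with positive fourth coordinate, perturbing within the 2-dimensional kernel to make all entries positive. If any other $v^{(l)}$ is not proportional to $v^{(4)}$, a small combination $-v^{(4)}+\epsilon v^{(l)}$ works. If all $v^{(l)}$ are proportional to $v^{(4)}$, then row 4 of $A$ is zero and the kernel contains $e_4$, so $-v^{(4)}+\epsilon e_4>0$. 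For a 4-cycle without a 3-cycle, say $1\to2\to3\to4\to1$, I would use the Pfaffian relation and the sign constraints to show that some $v^{(l)}$ is nonnegative with three positive entries, or else derive a 3-cycle. The remaining chords $a_{13},a_{24}$ are then forced by acyclicity of triangles, and the Pfaffian sign bookkeeping $a_{12}a_{34}+a_{14}a_{23}=a_{13}a_{24}$ fixes compatible signs.

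The main obstacle is this sufficiency bookkeeping, especially degenerate cases where some $a_{ij}=0$ make the $v^{(l)}$ vanish or coincide. My first attempt would be the perturbation argument inside the two-dimensional kernel, combined with the case enumeration; in the paper this enumeration is presumably the source of the five digraph classes.
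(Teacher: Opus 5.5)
Your reduction to $K\neq\varnothing$, the necessity of $\det A=0$, and the 3-cycle case are fine. (In the 3-cycle case, the branch in which all $v^{(l)}$ are proportional to $v^{(4)}$ never occurs: for $1\to2\to3\to1$, the vector $v^{(1)}=(0,a_{34},-a_{24},a_{23})$ already has fourth entry $a_{23}\neq0$.) Your necessity-of-a-cycle argument is correct and differs from the paper's. You use a sink $i$ with an in-edge $(k,i)$, which gives $(Ax)_i\ge a_{ik}x_k>0$ for every positive $x$. The paper instead reads the cycle off from where $L$ leaves $\Delta^3$: a transversal crossing of some $\rint F_{-i}$ (Lemma~\ref{lem:3cycle}) or crossings of two non-incident edges (Lemma~\ref{lem:twoedge4cycle}). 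Your version is more direct and is not tied to $n=4$.

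The gap is the 4-cycle case of sufficiency. Once there is no 3-cycle, both chords must vanish; for $1\to2\to3\to4\to1$ this means $a_{13}=a_{24}=0$, which is the paper's class~(V). Then every cofactor vector has exactly two nonzero entries:
\[
v^{(4)}=(a_{23},0,a_{12},0),\quad v^{(1)}=(0,a_{34},0,a_{23}),\quad v^{(2)}=(a_{34},0,-a_{14},0),\quad v^{(3)}=(0,-a_{14},0,a_{12}).
\]
So the dichotomy you plan to prove is false exactly where you need it. That dichotomy was ``some $v^{(l)}$ is nonnegative with three positive entries, or else there is a 3-cycle.'' As a result, the perturbation argument from your 3-cycle case has no vector with three positive entries to start from.

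The missing idea is to add two sign-definite kernel vectors with complementary supports. Here $a_{12},a_{23},a_{34}<0$, so $-(v^{(1)}+v^{(4)})=(-a_{23},-a_{34},-a_{12},-a_{23})$ is strictly positive. It lies in $\nll(A)$ because $a_{12}a_{34}-a_{13}a_{24}+a_{14}a_{23}=0$. This is essentially what the paper does in Lemma~\ref{lem:4cycle}: a kernel point $y(r)$ on one edge is combined with a small multiple of a kernel vector that is positive on the two remaining coordinates. With this replacement, together with your (correct) observation that a cycle without a 3-cycle forces $\mathcal{G}_A$ to be a 4-cycle, your proof closes.
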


We establish the theorem below. We start by recalling a known fact~\cite{knebel2013coexistence,knebel2015evolutionary}:

\begin{Lemma}
The replicator dynamics~\eqref{eq:zsreplicator} is coexistent if and only if $K$ is nonempty. 
\end{Lemma}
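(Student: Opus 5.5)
The plan is to prove the two implications separately, each with a logarithmic Lyapunov-type function of the form $W_c(x)=\sum_{i=1}^4 c_i\log x_i$. Along trajectories of~\eqref{eq:zsreplicator} in $\rint\Delta^3$ this function satisfies
\begin{equation*}
\dot W_c=\sum_{i=1}^4 c_i\,(Ax)_i=c^\top A x .
\end{equation*}

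\emph{Sufficiency ($K\neq\emptyset\Rightarrow$ coexistence).} Pick $p\in K$, so $p_i>0$ for all $i$ and $Ap=0$. Take $c=p$. By skew-symmetry,
\begin{equation*}
\dot W_p=p^\top Ax=-(Ap)^\top x=0 ,
\end{equation*}
so $W_p$ is a first integral on $\rint\Delta^3$.
\begin{enumerate}
\item Because every $p_i>0$, we have $W_p(x)\to-\infty$ whenever $x$ approaches $\partial\Delta^3$ from inside $\Delta^3$, since each term $p_i\log x_i$ is bounded above by $0$.
\item Fix $x(0)\in\rint\Delta^3$. The level set $\{x\in\rint\Delta^3: W_p(x)=W_p(x(0))\}$ is then a closed subset of $\rint\Delta^3$ and stays a positive distance from $\partial\Delta^3$.
\item Quantitatively, $p_j\log x_j\ge W_p(x(0))$ for each $j$. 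This gives the explicit bound $x_j(t)\ge \exp\!\big(W_p(x(0))/p_j\big)=:\varepsilon_j>0$ for all $t\ge 0$.
\end{enumerate}
Taking $\varepsilon=\min_j\varepsilon_j$ yields coexistence.

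\emph{Necessity (coexistence $\Rightarrow K\neq\emptyset$).} I argue by contraposition and assume $K=\emptyset$.
\begin{enumerate}
\item Any interior equilibrium $q$ satisfies $(Aq)_i=q^\top Aq$ for all $i$, and $q^\top Aq=0$ by skew-symmetry, so $Aq=0$. Hence $K=\emptyset$ means $0\notin C:=A(\rint\Delta^3)$.
\item The set $C$ is convex and relatively open, being the linear image of a relatively open convex set, so $0\notin\operatorname{ri}C$.
\item The proper separation theorem (Rockafellar, Thm.~11.3) gives $c\in\R^4$ with $c^\top y\ge 0$ on $C$ and $c^\top y$ not identically zero on $C$.
\item A linear functional that is nonnegative on a relatively open convex set and vanishes at one of its points attains its minimum at a relative interior point, so it is constant ($=0$) on that set. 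This would contradict proper separation, so in fact $c^\top Ax>0$ for every $x\in\rint\Delta^3$.
\end{enumerate}
Now suppose, for contradiction, that the dynamics is coexistent, and take any interior trajectory.
\begin{enumerate}
\item Coexistence gives $\varepsilon>0$ and $T$ with $x_i(t)\ge\varepsilon/2$ for all $i$ and all $t\ge T$. So $x(t)$ stays in the compact set $Q_\varepsilon=\{x\in\Delta^3:x_i\ge\varepsilon/2\ \forall i\}\subset\rint\Delta^3$.
\item On $Q_\varepsilon$ the continuous positive function $c^\top Ax$ is bounded below by some $\delta>0$. Hence $W_c(x(t))\ge W_c(x(T))+\delta(t-T)\to\infty$.
\item But $W_c$ is continuous and therefore bounded on $Q_\varepsilon$, which is a contradiction.
\end{enumerate}
Hence coexistence forces $K\neq\emptyset$.

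The main obstacle is the separation step in the necessity direction. A naive separating hyperplane only gives $c^\top Ax\ge 0$, and the degenerate case $c^\top Ax\equiv 0$ would make $W_c$ useless. The plan handles this by invoking \emph{proper} separation for the relatively open convex set $C$, which upgrades the weak inequality to the strict one. Everything else reduces to the identity $\dot W_c=c^\top Ax$ together with compactness.
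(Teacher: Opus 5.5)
Your proof is correct. The paper gives no argument of its own for this lemma; it recalls it as a known fact from \cite{knebel2013coexistence,knebel2015evolutionary}. What you have written is essentially the standard argument behind such results: a first integral for sufficiency, and, for necessity, a theorem of the alternative combined with a strictly increasing function $\sum_i c_i\ln x_i$, as in the classical treatment in \cite{HofbauerSigmund1998}. Compared with the citation, your route makes the section self-contained and shows exactly which structure is used. The sufficiency direction uses the same first integral the paper later relies on in Lemma~\ref{lem:phi_z}, since $W_p=-\phi_p+\sum_i p_i\ln p_i$. It also gives the explicit bound $x_j(t)\ge\exp\bigl(W_p(x(0))/p_j\bigr)$ for all $t\ge 0$, which is stronger than the asymptotic $\liminf$ condition in the definition. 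The necessity direction uses skew-symmetry only to remove the term $x^\top Ax$, so no normalization $\sum_i c_i=0$ is needed, and it never uses $n=4$. It in fact shows that if $K=\varnothing$, then every interior trajectory satisfies $\liminf_{t\to\infty}\min_i x_i(t)=0$, not merely some trajectory. Your steps 3--4 (proper separation, then upgrading $c^\top Ax\ge 0$ to $c^\top Ax>0$ via the relative-interior argument) are sound. They can, however, be compressed into one line with Stiemke's theorem of the alternative. Either $Ax=0$ has a strictly positive solution, which after normalization lies in $K$, or there is $c$ with $A^\top c\ge 0$ and $A^\top c\neq 0$, whence $c^\top Ax=(A^\top c)^\top x>0$ for every $x\in\rint\Delta^3$.
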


It is clear that a necessary condition for $K$ to be nonempty is that $A$ has a nontrivial null space, which can hold if and only if $\det(A) = 0$.
For the remainder of the section, we assume $\det(A)=0$ and show that $K\neq\varnothing$ if and only if $\mathcal{G}_A$ contains a cycle. 
%We will show that there will be two mutually exclusive cases, namely the case where...  If $\mathcal{G}_A$ contains a directed $3$-cycle, then Lemma~\ref{lem:3cycle} shows that $L$ intersects the relative interior of a face of $\Delta^3$, which implies $K\neq\varnothing$. If $\mathcal{G}_A$ contains no directed $3$-cycle, then any cycle of $\mathcal{G}_A$ must be a directed $4$-cycle; this case is handled by Lemma~\ref{lem:twoedge4cycle}. Lemma~\ref{lem:4cycle} serves as an intermediate result for the latter case, and is also used later for describing the geometry of the equilibrium set. The relevant graph configurations are illustrated in Fig.~\ref{fig:combined_cases}.
% }
%For the remainder of the section, we assume $\det(A) = 0$ and show that  $K \neq \varnothing$ if and only if $\mathcal{G}_A$ contains a cycle.   

Since $A$ is nonzero (by the hypothesis of Theorem~\ref{thm:main2}), the dimension of the null space of $A$ is $2$. 
Recall that $H$ is the hyperplane in $\R^4$ that contains $\Delta^3$. 
Let 
$$
L:= \nll(A) \cap H.
$$
Then, either $L = \varnothing$ or $L$ is a straight line. 
In particular, if $K = L \cap \rint \Delta^3$ is nonempty, it is an open line segment. 

{  
Suppose that $K\neq\varnothing$. Then $L\cap\Delta^3$ is a line segment with two endpoints on $\partial\Delta^3$. There are two mutually exclusive cases: 

{\it Case 1:} $L$ intersects the relative interior of a certain face of $\Delta^3$. We address this case in Lemma~\ref{lem:3cycle}. 

{\it Case 2:} $L$ intersects the relative interiors of two non-incident edges of $\Delta^3$. This case will be covered by Lemmas~\ref{lem:4cycle} and~\ref{lem:twoedge4cycle}. 

These lemmas will further be used to classify the replicator dynamics~\eqref{eq:zsreplicator}, the results of which will be given in Theorem~\ref{thm:fineclass}. The corresponding configurations of $L\cap \Delta^3$ are shown in Fig.~\ref{fig:combined_cases}.}

{ For each $i \in \mathcal{V}$, we let $F_{-i}$ be the face of $\Delta^{3}$ that contains the other three vertices $e_j$, for $j\neq i$, i.e.,
$$
F_{-i} := \{x\in \Delta^3 \mid x_i = 0\}.
$$
% If the straight line $L$ intersects  $\rint F_{-i}$ transversally in $H$, for some $i\in \mathcal{V}$ (in this case, the intersection comprises a single point), then $K$ is nonempty. 
The following lemma provides a necessary and sufficient condition for
$L$ to intersect $\rint F_{-i}$ transversally in $H$.
% such intersection to occur. 
}
\begin{Lemma}\label{lem:3cycle}
The straight line $L$ intersects $\rint F_{-i}$ transversally in $H$ if and only if the subgraph of $\mathcal{G}_A$ induced by $\mathcal{V} - \{i\}$ is a $3$-cycle.
\end{Lemma}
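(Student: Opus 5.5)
We take $i=4$ without loss of generality; the other cases follow by relabeling. Write $B$ for the $3\times 3$ principal submatrix of $A$ indexed by $\{1,2,3\}$. This $B$ is skew-symmetric, with entries $a_{12},a_{13},a_{23}$. The plan is to reduce everything to the kernel of $B$, using two elementary facts.

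\emph{First fact (kernel of $B$).} If $B\neq 0$, then $\nll(B)$ is spanned by $w:=(a_{23},-a_{13},a_{12})^\top$. This can be checked directly, since $w$ is the vector of cofactors / cross-product representation of $B$.

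\emph{Second fact (use of $\det A=0$).} Set $v:=(w,0)\in\R^4$. Its first three entries of $Av$ are $Bw=0$. Its fourth entry is
$$a_{41}a_{23}-a_{42}a_{13}+a_{43}a_{12}=-(a_{12}a_{34}-a_{13}a_{24}+a_{14}a_{23}),$$
which is minus the Pfaffian and hence vanishes, because $\det(A)=0$ is a standing assumption in this section. So $v\in\nll(A)$ whenever $\det A=0$.

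\textbf{Sign pattern.} Next we translate the sign pattern of $w$ into graph language. $w$ (or $-w$) is strictly positive iff $a_{12},a_{23},a_{31}$ are all nonzero and share one sign. If all three are positive, the edges are $2\to1$, $3\to2$, $1\to3$. If all three are negative, the edges are $1\to2$, $2\to3$, $3\to1$. Either way the subgraph induced by $\{1,2,3\}$ is a directed $3$-cycle. Conversely, a directed $3$-cycle on $\{1,2,3\}$ forces exactly one of these two sign patterns, because each pair carries exactly one edge orientation when the entry is nonzero. Hence: the induced subgraph is a $3$-cycle $\iff$ $B\neq 0$ and $\nll(B)$ contains a strictly positive vector.

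\textbf{($\Leftarrow$).} Suppose the induced subgraph is a $3$-cycle. Normalize $v$ so that $v/(\mathbf 1^\top v)$ lies in $H$; this point is in $L\cap\rint F_{-4}$. For transversality, note that $\nll(A)\cap\{x_4=0\}$ consists of vectors $(y,0)$ with $By=0$, which forms the line $\operatorname{span}(v)$ since $B\neq0$. If $L$ were contained in $\{x_4=0\}$, then $L\subset\operatorname{span}(v)$. Both are lines, so $L=\operatorname{span}(v)\ni 0$, contradicting $0\notin H$. Hence $L$ meets the plane $\{x_4=0\}\cap H$ in a single point, which is transversal intersection.

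\textbf{($\Rightarrow$).} Suppose $L$ meets $\rint F_{-4}$ transversally at a point $x$. Then $x=(y,0)$ with $y>0$ and $By=0$. We must rule out $B=0$. If $B=0$, the only nonzero entries of $A$ lie in row/column $4$, so $\nll(A)\supseteq\{(y,0): a_{41}y_1+a_{42}y_2+a_{43}y_3=0\}$. This is a $2$-dimensional space, hence equals $\nll(A)$, and it lies inside $\{x_4=0\}$. Then $L\subset\{x_4=0\}$, contradicting transversality. So $B\neq0$, $y$ is a positive multiple of $\pm w$, and the sign analysis above yields the $3$-cycle.

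\textbf{Main obstacle.} The step needing the most care is interpreting ``transversally in $H$'' precisely and handling the degenerate case $B=0$. I would fix the definition as ``$L\not\subset\{x_i=0\}$, so $L$ meets the face plane in exactly one point.''
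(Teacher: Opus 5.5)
Your proof is correct and follows essentially the same route as the paper. Both use the cofactor vector ($(a_{23},-a_{13},a_{12},0)$ for you, the paper's $(0,a_{34},-a_{24},a_{23})$ after relabeling), the vanishing Pfaffian to kill the remaining row, and the same exclusion of the case where the $3\times 3$ block is zero (since then $L$ lies in the face hyperplane). The only cosmetic difference is how transversality is certified: you note that $\nll(A)\cap\{x_4=0\}=\operatorname{span}(v)$ passes through the origin, while the paper shows the intersection point in $\rint F_{-1}$ is unique; both rest on the kernel of the nonzero $3\times 3$ skew block being one-dimensional.
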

\begin{proof}
    { Without loss of generality, we assume that $i=1$.} By the definition of $\mathcal{G}_A$, the subgraph induced by $\mathcal{V}-\{1\}$ is a $3$-cycle if and only if
    \begin{equation}
    \label{eq:cond_face}
    \sgn(a_{23})=\sgn(a_{34})=\sgn(a_{42}) \neq 0.
    \end{equation}
    We establish below the necessity and sufficiency of~\eqref{eq:cond_face} for the transversal intersection of $L$ and $\rint F_{-1}$.  

{\it{Proof of Necessity.}} If $L$ intersects $\rint F_{-1}$ transversally, then the intersection has a single point, which we denote by $z = (z_1,\ldots, z_4)$. Since $Az = 0$, we have that
\begin{equation}
\label{eq:rowz234}
\left\{
\begin{aligned}
a_{23}z_3+a_{24}z_4&=0,\quad\\
-a_{23}z_2+a_{34}z_4&=0,\quad\\
-a_{24}z_2-a_{34}z_3&=0.
\end{aligned}\right.
\end{equation}
Because $z\in \rint F_{-1}$, $z_i > 0$ for $i = 2, 3,4$. Thus, for~\eqref{eq:rowz234} to hold, either~\eqref{eq:cond_face} holds or $a_{23}=a_{24}=a_{34}=0$. 
For the latter case, we claim that $L$ cannot intersect $F_{-1}$ transversally. 
Indeed, if $a_{23}=a_{24}=a_{34}=0$, then $A$ takes the following form:
$$
A=
\begin{bmatrix}
0&a_{12}&a_{13}&a_{14}\\
-a_{12}&0&0&0\\
-a_{13}&0&0&0\\
-a_{14}&0&0&0
\end{bmatrix}.
$$
Since $A\neq 0$ (by hypothesis of Theorem~\ref{thm:main2}), any $z\in L$ satisfies $z_1 = 0$. It follows that the line $L$ lies in the hyperplane of $\R^4$ that contains $F_{-1}$. In particular, if $L$ intersects $\rint F_{-1}$, the intersection cannot be transversal.

{\it{Proof of Sufficiency.}}
We now assume that~\eqref{eq:cond_face} holds and show that there exists a unique $z\in L \cap \rint F_{-1}$.  
%Suppose the subgraph induced by $\mathcal{V}-\{i\}$ is a $3$-cycle, then we have~\eqref{eq:cond_face} holds. 
Consider the vector
$v:= (0,a_{34},-a_{24},a_{23})$. By~\eqref{eq:cond_face}, $v_2$, $v_3$, and $v_4$ are nonzero and have the same sign, so 
$\mathbf{1}^\top v\neq 0$. We then define
$z:=\frac{1}{\mathbf{1}^\top v}\,v$. 
By construction, we have that $z_1=0$, $z_i > 0$ for $i = 2,3,4$, and $\mathbf{1}^\top z=1$. Thus, $z\in \rint F_{-1}$. Also, it follows from computation that 
$$
Az
%=
% \frac{1}{\mathbf{1}^\top v}
% \begin{bmatrix}
% a_{12}a_{34}-a_{13}a_{24}+a_{14}a_{23}\\
% a_{23}(-a_{24})+a_{24}(a_{23})\\
% (-a_{23})a_{34}+a_{34}(a_{23})\\
% (-a_{24})a_{34}+(-a_{34})(-a_{24})
% \end{bmatrix}
=\frac{1}{\mathbf{1}^\top v}
\begin{bmatrix}
a_{12}a_{34}-a_{13}a_{24}+a_{14}a_{23}\\
0\\
0\\
0
\end{bmatrix} = 0,
$$
where the last equality follows from the fact that  
$\det(A)= (a_{12}a_{34}-a_{13}a_{24}+a_{14}a_{23})^2=0$. Thus, $z\in L\cap \rint F_{-1}$. 
It now remains to show that $z$ is the only point in $L\cap \rint F_{-1}$. Let $z'=(0,z'_2,z'_3,z'_4)\in L \cap \rint F_{-1}$.
Since $Az'=0$, we have that
\begin{equation}\label{eq:rows234}
\left\{
\begin{aligned}
 a_{23}z'_3+a_{24}z'_4&=0,\\
 -a_{23}z'_2+a_{34}z'_4&=0,\\
 -a_{24}z'_2-a_{34}z'_3&=0.
\end{aligned}
\right. 
\end{equation}
On the one hand, it follows from~\eqref{eq:cond_face} that a solution of the homogeneous equation~\eqref{eq:rows234} must be a scalar multiple of $(z_2, z_3, z_4)$, i.e., $(z'_2,z'_3,z'_4) = t(z_2, z_3, z_4)$ for some $t \in \R$. On the other hand, for $z'\in \rint F_{-1}$ to hold, we must have that $\sum_{i = 2}^4 z'_i = 1$. We thus conclude that $t = 1$ and hence, $z' = z$. 
This completes the proof. 
\end{proof}
% is one-dimensional. In fact, it forces
% $$
% (z'_2,z'_3,z'_4)=t\,(a_{34},-a_{24},a_{23})
% \quad\text{for some }t>0.
% $$
% Normalizing such that $\mathbf{1}^\top z'=1$ yields $z'=z$. Hence,
% $\nll(A)\cap\rint(F_{-1})=\{z\}$.

Besides the scenario where $L$ intersects $\rint F_{-i}$, for some $i\in \mathcal{V}$, at a (unique) point, the other scenario that can lead to nonempty $K$ is that $L$ intersects the interiors of two non-incident edges of $\Delta^3$. We address below the latter scenario. 
For any two distinct $i,j\in \mathcal{V}$, let 
$$E_{ij}:= \{ x\in \Delta^3 \mid x_k = 0, \mbox{ for } k\in \mathcal{V} - \{i,j\}\}.$$ 
%{  ///same problem, add transversally.///  We say that $L$ intersects $\rint(E_{ij})$ transversally if
%$L$ intersects $\rint(E_{ij})$ at a single point and  $L\cap \rint \Delta^3\neq \varnothing$.}
%Then, the aforementioned $L$ satisfies the condition that there exist two edges $E_{ij}$ and $E_{k\ell}$, with $\mathcal{V} = \{i,j,k,\ell\}$, such that 
%\begin{equation}\label{eq:Lintersectsedges}
%L \cap \rint E_{ij} \neq \varnothing\quad \mbox{and} \quad L \cap %\rint E_{k\ell} \neq \varnothing.
%\end{equation} 
%Conversely, if $L$ satisfies~\eqref{eq:Lintersectsedges}, then $L$ %intersects $\rint \Delta^3$.  
We need the following lemma: 
%provides a necessary and sufficient condition for $L$ to intersect $\rint E_{ij}$ transversally. 

% {  The set $L$ intersects $\rint E_{ij}$ transversally if and only if $a_{ij}=0$, and there exists a $r>0$, such that  $$ra_{ik}+a_{jk}=0\mbox{ and }a_{ik}a_{jk}\neq 0$$ for all $k\in \mathcal{V}-\{i,j\}$.
% }
\begin{Lemma}\label{lem:4cycle}
The set $L$ intersects both $\rint E_{ij}$ and $\rint \Delta^3$ if and only if
$a_{ij}=0$ and $\mathcal{G}_A$ contains a $4$-cycle.
\end{Lemma}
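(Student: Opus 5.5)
We take $i=1$, $j=2$ without loss of generality; the other pairs follow by relabeling. Points of $\rint E_{12}$ have the form $z=(s,1-s,0,0)$ with $s\in(0,1)$. The plan is to write out $Az=0$ row by row, exactly as in the proof of Lemma~\ref{lem:3cycle}, and to combine it with the standing identity $\det(A)=0$, which reads
$$
a_{12}a_{34}-a_{13}a_{24}+a_{14}a_{23}=0 .
$$
We will also use that $\nll(A)$ is two-dimensional.

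\emph{Necessity.} Let $z\in L\cap\rint E_{12}$. Row~1 of $Az=0$ gives $a_{12}(1-s)=0$, so $a_{12}=0$. Rows~3 and~4 give $a_{13}s+a_{23}(1-s)=0$ and $a_{14}s+a_{24}(1-s)=0$. Hence $a_{13}$ and $a_{23}$ are either both zero or nonzero of opposite signs, and the same holds for $a_{14}$ and $a_{24}$.

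We first exclude the degenerate possibilities using the interior point $w\in L\cap\rint\Delta^3$. Suppose $a_{13}=a_{23}=0$. Row~3 of $Ax=0$ then reads $a_{34}x_4=0$.
\begin{itemize}
\item If $a_{34}\neq 0$, every null vector has $x_4=0$.
\item If $a_{34}=0$, the only possibly nonzero entries are $a_{14},a_{24}$. Rows~1 and~2 then force $x_4=0$, unless $a_{14}=a_{24}=0$, in which case $A=0$.
\end{itemize}
In every case $L\subset\{x_4=0\}$, which contradicts $w_4>0$. The case $a_{14}=a_{24}=0$ is symmetric, since then all null vectors have $x_3=0$.

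So $a_{13},a_{23},a_{14},a_{24}$ are all nonzero, with $a_{13}a_{23}<0$ and $a_{14}a_{24}<0$. Row~1 applied to $w$ gives $a_{13}w_3+a_{14}w_4=0$ with $w_3,w_4>0$, so $\sgn a_{13}=-\sgn a_{14}=\sgn a_{41}$. Together with $\sgn a_{32}=\sgn a_{13}$ and $\sgn a_{24}=\sgn a_{41}$, this yields
$$
\sgn a_{13}=\sgn a_{32}=\sgn a_{24}=\sgn a_{41}\neq 0,
$$
which says precisely that $1\to3\to2\to4\to1$ or its reverse is a directed $4$-cycle in $\mathcal{G}_A$.

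\emph{Sufficiency.} Assume $a_{12}=0$ and that $\mathcal{G}_A$ has a $4$-cycle. Since $\{1,2\}$ carries no edge, the only Hamiltonian cycle available is $1,3,2,4$ in one of its two orientations. This gives the sign equalities displayed above; by symmetry we fix $a_{13}<0$, so $a_{23}>0$, $a_{24}<0$ and $a_{14}>0$. We set $s:=a_{23}/(a_{23}-a_{13})\in(0,1)$ and $z:=(s,1-s,0,0)$. Rows~1--3 of $Az$ vanish by construction. Row~4 reduces to $(a_{14}a_{23}-a_{13}a_{24})/(a_{23}-a_{13})$, which equals $-a_{12}a_{34}/(a_{23}-a_{13})=0$ by the determinant identity. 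Thus $z\in L\cap\rint E_{12}$.

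To reach the interior, we use the vector $y:=(0,a_{34},-a_{24},a_{23})$ from the proof of Lemma~\ref{lem:3cycle}. The same computation shows $Ay=0$ whenever $\det(A)=0$, and in our sign configuration $y_3,y_4>0$. For small $\varepsilon>0$ the vector $z+\varepsilon y$ is strictly positive and lies in $\nll(A)$. After normalizing by $1+\varepsilon\mathbf{1}^\top y>0$, it gives a point of $L\cap\rint\Delta^3$.

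The main obstacle is the degenerate analysis in the necessity part: the sign conclusions are useful only once all four mixed entries are known to be nonzero. The interior point $w$ is the tool that rules out the zero cases, because each zero case confines $L$ to a face of $\Delta^3$.
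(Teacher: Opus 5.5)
Your proof is correct and follows essentially the same route as the paper. For necessity, you parameterize $\rint E_{12}$, read off $a_{12}=0$ and the two edge equations, and use row~1 of $Aw=0$ at an interior point to fix the remaining sign. For sufficiency, you construct the edge point using $\det(A)=0$ and push it into the interior along a second null vector. The differences are cosmetic: the paper applies row~1 first, so the degenerate case collapses at once to $a_{13}=a_{14}=a_{23}=a_{24}=0$ (hence $A=0$), and it perturbs along $a_{13}(a_{34},0,-a_{14},a_{13})$ rather than your $(0,a_{34},-a_{24},a_{23})$. Also, row~4 of $Az$ is the negative of the expression you wrote, which is harmless since it vanishes.
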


\begin{proof}
Without loss of generality, we assume that $i=1$ and $j=2$.
We can parameterize $\rint E_{12}$ as
$$
\rint E_{12} = \left \{y(r):=\frac{1}{1+r}(r,1,0,0) \mid  r>0 \right \}.
$$
%We first characterize when $z(s)\in L$. A direct computation yields
% \begin{equation}
% \label{eq:edge_eqs_4cycle}
% A z(r)=\frac{1}{1+r}
% \begin{bmatrix}
% a_{12}\\
% -r a_{12}\\
% -r a_{13}-a_{23}\\
% -r a_{14}-a_{24}
% \end{bmatrix}. 
% \end{equation}
It follows from computation that $A y(r) = 0$ (i.e., $L$ intersects $\rint E_{12}$) if and only if 
\begin{equation}\label{eq:edge_kernel_condition}
a_{12}=0,\qquad ra_{13}+a_{23}=0,\qquad ra_{14}+a_{24}=0.
\end{equation}
We now prove the two implications.

We first assume that~\eqref{eq:edge_kernel_condition} holds for some $r > 0$ and $z=(z_1,z_2,z_3,z_4)\in L\cap \rint\Delta^3$, and prove that $a_{12} = 0$ and $\mathcal{G}_A$ has a $4$-cycle.  
% By~\eqref{eq:edge_kernel_condition}, we have
% \begin{equation}\label{eq:edge_relations}
% a_{12}=0,\qquad ra_{13}+a_{23}=0,\qquad ra_{14}+a_{24}=0.
% \end{equation} 
Since $z\in L$, the first row of $Az=0$ yields
$
a_{12}z_2+a_{13}z_3+a_{14}z_4=0$. 
Using $a_{12}=0$ from~\eqref{eq:edge_kernel_condition}, we obtain that
\begin{equation}\label{eq:kernel_2}
a_{13}z_3+a_{14}z_4=0.
\end{equation}
By~\eqref{eq:kernel_2} and the fact that $z\in \rint \Delta^3$, we have that either $\sgn(a_{13}) = -\sgn(a_{14}) \neq 0$ or
$a_{13}=a_{14}=0$. But, if the latter holds, then by~\eqref{eq:edge_kernel_condition}, $a_{23} = a_{24} = 0$. Further, by the fact that $Az = 0$ with $z\in \rint \Delta^3$, we have that $A = 0$, which is a contradiction. We thus must have that $\sgn(a_{13}) = -\sgn(a_{14}) \neq 0$.   
Combining this fact with~\eqref{eq:edge_kernel_condition}, we have that
\begin{equation}\label{eq:4cycle_sign}
\sgn(a_{13})=\sgn(a_{32})=\sgn(a_{24})=\sgn(a_{41})\neq 0,
\end{equation}
so $\mathcal G_A$ contains a $4$-cycle.
% We further obtain
% $$
% \sgn(a_{13})=\sgn(a_{32})\mbox{ and }
% \sgn(a_{14})=\sgn(a_{42}),
% $$
% which follows directly from~\eqref{eq:ratio} and the skew-symmetricity of $A$.

Conversely, we assume that $a_{12}=0$ and $\mathcal G_A$ contains a $4$-cycle, and show that~\eqref{eq:edge_kernel_condition} holds for some $r > 0$ and that $L$ intersects $\rint \Delta^3$. 
Since $a_{12} = 0$, there is no edge between nodes $1$ and $2$. Then, it is not hard to see that $\mathcal{G}_A$ has a $4$-cycle if and only if~\eqref{eq:4cycle_sign} holds.
Further, note that
\begin{equation*}
\det(A) =(a_{12}a_{34}-a_{13}a_{24}+a_{14}a_{23})^2 
=(-a_{13}a_{24}+a_{14}a_{23})^2=0.
\end{equation*}
Since $a_{13}\neq 0$ and $a_{14} \neq 0$, we define
{  
$$
r:=-\frac{a_{23}}{a_{13}}=-\frac{a_{24}}{a_{14}}.
$$
By~\eqref{eq:4cycle_sign}, we have that $r > 0$ and hence, ~\eqref{eq:edge_kernel_condition} holds.}
% where last inequality follows from~\eqref{eq:4cycle_sign}. we have $r>0$, and 
% $$a_{12}=0,\qquad ra_{13}+a_{23}=0,\qquad ra_{14}+a_{24}=0.$$
% By condition
% \eqref{eq:edge_kernel_condition}, we have
% $$
% z(r)=\frac{1}{1+r}(r,1,0,0)\in L\cap \rint E_{12}.
% $$
Finally, we define
{ $$
v:=a_{13}\begin{bmatrix}
a_{34}\\0\\-a_{14}\\a_{13}
\end{bmatrix}.
$$}
By \eqref{eq:4cycle_sign}, $v_3 > 0$ and $v_4 > 0$. Also, it follows directly from computation that $Av=0$, so $v\in \nll(A)$. 
Thus, for any sufficiently small $\varepsilon>0$, the point 
$u:=y(r)+\varepsilon v$ has all entries positive, and moreover, satisfies $A u = 0$. We conclude that $z:= u/(\mathbf 1^\top u)$ belongs to $L\cap \rint \Delta^3$. 
This completes our proof.
\end{proof}
% has strictly positive coordinates. After normalization, we have
% $$
% z:=\frac{u}{\mathbf 1^\top u}\in L\cap \rint \Delta^3.
% $$
% Therefore,
% $$
% L\cap \rint E_{12}\neq \varnothing
% \qquad\text{and}\qquad
% L\cap \rint\Delta^3\neq \varnothing .
% $$

An immediate consequence of Lemma~\ref{lem:4cycle} is the following result:

\begin{Lemma}
\label{lem:twoedge4cycle}
    There exist two non-incident edges $E_{ij}$ and $E_{k\ell}$ of $\Delta^3$ such that the set $L$ intersects both $\rint E_{ij}$ and $\rint E_{k\ell}$ if and only if $\mathcal{G}_A$ is a $4$-cycle.
\end{Lemma}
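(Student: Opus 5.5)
The plan is to prove both directions by reducing to Lemma~\ref{lem:4cycle}, applied once to each of the two edges, and then to read off the remaining edges of $\mathcal{G}_A$ from the sign relations~\eqref{eq:4cycle_sign}. Recall that we are assuming $\det(A)=0$ and $A\neq 0$. By symmetry we may take $\{i,j\}=\{1,2\}$ and $\{k,\ell\}=\{3,4\}$, since the three pairs of non-incident edges are permuted by relabelling the vertices.

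\emph{Necessity.} Suppose $L$ meets both $\rint E_{12}$ and $\rint E_{34}$. Because these two edges are disjoint and $L$ is a line, the segment of $L$ between the two intersection points lies in $\rint\Delta^3$. To see this, take $p\in\rint E_{12}$ and $q\in\rint E_{34}$. Then $(1-s)p+sq$ has all four coordinates positive for every $s\in(0,1)$. Hence $L$ also meets $\rint\Delta^3$, and Lemma~\ref{lem:4cycle} applied to $E_{12}$ gives $a_{12}=0$. Applying the lemma to $E_{34}$ gives $a_{34}=0$.

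Moreover, the proof of Lemma~\ref{lem:4cycle} yields the sign relations~\eqref{eq:4cycle_sign}: $a_{13},a_{32},a_{24},a_{41}$ are nonzero and share a common sign. So $\mathcal{E}$ consists of exactly four edges, since the pairs $\{1,2\}$ and $\{3,4\}$ carry none. These four edges form either the directed cycle $1\to3\to2\to4\to1$ or its reverse. Therefore $\mathcal{G}_A$ is itself a $4$-cycle, not merely a graph containing one.

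\emph{Sufficiency.} Suppose $\mathcal{G}_A$ is a $4$-cycle. Then exactly two vertex pairs carry no edge, and these are the pairs of vertices that are non-consecutive on the cycle. Relabel so that the cycle is $1\to3\to2\to4\to1$ (or its reverse). Then $a_{12}=a_{34}=0$, and $\mathcal{G}_A$ trivially contains a $4$-cycle. Lemma~\ref{lem:4cycle} with the pair $(1,2)$ gives $L\cap\rint E_{12}\neq\varnothing$. The same lemma with the pair $(3,4)$ gives $L\cap\rint E_{34}\neq\varnothing$. The edges $E_{12}$ and $E_{34}$ are non-incident, as required.

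The main subtlety is in the necessity direction. First, Lemma~\ref{lem:4cycle} needs the hypothesis $L\cap\rint\Delta^3\neq\varnothing$, which is not given directly; the convexity argument above supplies it. Second, we must upgrade ``contains a $4$-cycle'' to ``is a $4$-cycle''. For this we use both vanishing entries $a_{12}=a_{34}=0$ together with the nonvanishing of the other four entries, so that the edge set is exactly the cycle.
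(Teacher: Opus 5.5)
Your proof is correct and follows the paper's argument essentially step for step. It uses the convexity argument to get $L\cap\rint\Delta^3\neq\varnothing$, then two applications of Lemma~\ref{lem:4cycle} to obtain $a_{12}=a_{34}=0$, and in the converse a relabeling so that the two missing pairs are $\{1,2\}$ and $\{3,4\}$. The only minor difference is how you get from ``contains a $4$-cycle'' to ``is a $4$-cycle'': you read off the exact edge set from the sign relations~\eqref{eq:4cycle_sign} inside the proof of Lemma~\ref{lem:4cycle}, whereas the paper simply notes that a $4$-cycle must occupy the four vertex pairs other than $\{1,2\}$ and $\{3,4\}$.
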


\begin{proof}
Without loss of generality, we set $i=1,j=2,k=3,\ell=4$.
Assume that
$L\cap \rint E_{12}\neq \varnothing$ and $L\cap \rint E_{34}\neq \varnothing$, we prove $\mathcal{G}_A$ is a $4$-cycle. Pick
$y\in L\cap \rint E_{12}$, and
$y'\in L\cap \rint E_{34}$. Since $E_{12}$ and $E_{34}$ are non-incident, the open line segment joining $y$ and $y'$
lies in $\rint \Delta^3$. Hence, 
$L\cap \rint \Delta^3\neq \varnothing$. 
Applying Lemma~\ref{lem:4cycle} to $E_{12}$ and $E_{34}$, we obtain that
$a_{12}=0$, $a_{34}=0$,  
and $\mathcal{G}_A$ contains a $4$-cycle. Since $\{1,2\}\cap\{3,4\}=\varnothing$,
$(1,2)$ and $(3,4)$ are precisely the two missing edges on four vertices.
Therefore, $\mathcal{G}_A$ itself is a $4$-cycle. 

Conversely, we assume that $\mathcal G_A$ is a $4$-cycle and show that $L$ intersects two non-incident edges. By the definition of $\mathcal{G}_A$, we can write (up to relabeling)
\begin{equation*} \sgn(a_{13})=\sgn(a_{32})=\sgn(a_{24})=\sgn(a_{41})\neq 0, \mbox{ and } 
a_{12}=0, a_{34}=0.
\end{equation*}
By Lemma~\ref{lem:4cycle}, we have that
$L\cap \rint E_{12}\neq \varnothing$
 and 
$L\cap \rint E_{34}\neq \varnothing$. 
\end{proof}

To complete the proof of Theorem~\ref{thm:main2}, it suffices to establish the following lemma:
\begin{Lemma}
If $\mathcal{G}_A$ has a cycle but does not have a $3$-cycle, then $\mathcal{G}_A$ itself must be a $4$-cycle.
\end{Lemma}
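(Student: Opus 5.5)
The statement is purely graph-theoretic, so I would prove it directly from the structure of $\mathcal{G}_A$, without using $\det(A)=0$ or any of the earlier geometric lemmas. The only property of $\mathcal{G}_A$ I need is the one noted right after its definition: for any two distinct nodes there is at most one edge between them. Consequently every cycle of $\mathcal{G}_A$ has length $3$ or $4$.

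First, suppose $\mathcal{G}_A$ has a cycle $\mathcal{C}$ but no $3$-cycle. Then $\mathcal{C}$ must have length $4$, so it passes through all four nodes. After relabeling, I take $\mathcal{C}$ to be $1\to 2\to 3\to 4\to 1$. Its edges account for the four unordered pairs $\{1,2\},\{2,3\},\{3,4\},\{4,1\}$. The only remaining pairs are the two ``diagonals'' $\{1,3\}$ and $\{2,4\}$. It therefore suffices to show that neither diagonal carries an edge; then $\mathcal{E}$ consists exactly of the four cycle edges and $\mathcal{G}_A$ is itself a $4$-cycle.

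Next, I rule out each diagonal by a two-case check on its orientation. Take the pair $\{1,3\}$.
\begin{itemize}
\item If $(3,1)\in\mathcal{E}$, then $1\to 2\to 3\to 1$ is a directed $3$-cycle.
\item If $(1,3)\in\mathcal{E}$, then $1\to 3\to 4\to 1$ is a directed $3$-cycle.
\end{itemize}
Either way this contradicts the hypothesis, so $a_{13}=0$. The same argument, applied to the rotated cycle $2\to 3\to 4\to 1\to 2$, gives:
\begin{itemize}
\item $(4,2)\in\mathcal{E}$ yields the $3$-cycle $2\to 3\to 4\to 2$;
\item $(2,4)\in\mathcal{E}$ yields the $3$-cycle $2\to 4\to 1\to 2$.
\end{itemize}
Hence $a_{24}=0$ as well.

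There is no real obstacle here. The only point needing care is the observation that a chord of a directed $4$-cycle splits it into two triangles, and one of them is always a directed $3$-cycle whichever way the chord is oriented. Together with Lemmas~\ref{lem:3cycle}, \ref{lem:4cycle} and \ref{lem:twoedge4cycle}, this closes the case analysis (Case~1 versus Case~2) in the proof of Theorem~\ref{thm:main2}.
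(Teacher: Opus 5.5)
Your proof is correct and follows the paper's argument essentially verbatim. You reduce to a directed $4$-cycle $1\to 2\to 3\to 4\to 1$, then check that each of the four possible orientations of the diagonals $\{1,3\}$ and $\{2,4\}$ closes a directed $3$-cycle, with the same four triangles as in the paper's Cases~1--4.
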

\begin{proof}
{  
Let $\mathcal{C}$ be a cycle of $\mathcal{G}_A$. As noted right before Theorem~\ref{thm:main2}, $\mathcal{G}_A$ has only $3$- and $4$-cycles. By the hypothesis of the lemma, $\mathcal{G}_A$ does not have a $3$-cycle, so $\mathcal{C}$ must be a $4$-cycle.}
%Notice that $\mathcal{G}_A$ is composed of $4$ vertices and contains no $3$-cycle, then $\mathcal{G}_A$ must contain a $4$-cycle. 
Without loss of generality, we express $\mathcal{C}$ (up to permutation) as 
$\mathcal{C}=(1,2,3,4,1)$. The only possible additional edges are between the pairs $(1,3)$ and $(2,4)$. We show that any such edge creates a directed $3$-cycle. Consider $4$ cases:
\begin{description}
    \item[\it{Case 1.}] If $(1,3)\in\mathcal{E}$, then $(1,3,4,1)$ is a $3$-cycle;\\
    \item[{\it{Case 2.}}]  If $(3,1)\in\mathcal{E}$, then $(1,2,3,1)$ is a $3$-cycle;\\
    \item[{\it{Case 3.}}]  If $(2,4)\in\mathcal{E}$, then $(1,2,4,1)$ is a $3$-cycle;\\
    \item[{\it{Case 4.}}]  If $(4,2)\in\mathcal{E}$, then $(2,3,4,2)$ is a $3$-cycle.
\end{description}
Hence no additional edge can be present, and $\mathcal{G}_A$ is itself a $4$-cycle.
\end{proof}

\subsection{A finer classification}
\label{ssec:finer_class}

\begin{figure*}[t]
\centering
\setlength{\tabcolsep}{3pt}

\begin{tabular}{ccccc}

%==================== Top row ====================

\begin{subfigure}[t]{0.19\textwidth}
\centering
\begin{tikzpicture}[baseline=(current bounding box.center), scale=2.2,
  v/.style={circle, fill=black, inner sep=1.5pt},
  e/.style={line width=0.8pt, <-, >=Stealth}
]
  \node[v, label=below:$1$]  (n1) at (0,0) {};
  \node[v, label=below:$2$] (n2) at (1,0) {};
  \node[v, label=above:$3$]       (n3) at (0.5,{sqrt(3)/2}) {};
  \node[v, label=below:$4$]       (n4) at (0.5,{sqrt(3)/6}) {};
  \draw[e] (n2) -- (n3);
  \draw[e] (n3) -- (n4);
  \draw[e] (n4) -- (n2);
  \draw[e] (n4) -- (n1);
  \draw[e] (n1) -- (n3);
  \draw[e] (n1) -- (n2);
\end{tikzpicture}
\phantomsubcaption
\makeatletter
\def\@currentlabel{I}
\makeatother
\label{subfig:I}

\caption*{(I)}
\end{subfigure}
&
\begin{subfigure}[t]{0.19\textwidth}
\centering
\begin{tikzpicture}[baseline=(current bounding box.center), scale=2.2,
  v/.style={circle, fill=black, inner sep=1.5pt},
  e/.style={line width=0.8pt, <-, >=Stealth}
]
  \node[v, label=below:$1$]  (n1) at (0,0) {};
  \node[v, label=below:$2$] (n2) at (1,0) {};
  \node[v, label=above:$3$]       (n3) at (0.5,{sqrt(3)/2}) {};
  \node[v, label=below:$4$]       (n4) at (0.5,{sqrt(3)/6}) {};
  \draw[e] (n2) -- (n3);
  \draw[e] (n3) -- (n4);
  \draw[e] (n4) -- (n2);
  \draw[e] (n4) -- (n1);
  \draw[e] (n1) -- (n3);
\end{tikzpicture}
\phantomsubcaption
\makeatletter
\def\@currentlabel{II}
\makeatother
\label{subfig:II}

\caption*{(II)}\label{subfigure:II}
\end{subfigure}
&
\begin{subfigure}[t]{0.19\textwidth}
\centering
\begin{tikzpicture}[baseline=(current bounding box.center), scale=2.2,
  v/.style={circle, fill=black, inner sep=1.5pt},
  e/.style={line width=0.8pt, <-, >=Stealth}
]
  \node[v, label=below:$1$]  (n1) at (0,0) {};
  \node[v, label=below:$2$] (n2) at (1,0) {};
  \node[v, label=above:$3$]       (n3) at (0.5,{sqrt(3)/2}) {};
  \node[v, label=below:$4$]       (n4) at (0.5,{sqrt(3)/6}) {};
  \draw[e] (n2) -- (n4);
  \draw[e] (n4) -- (n3);
  \draw[e] (n3) -- (n2);
  \draw[e] (n1) -- (n3);
  \draw[e] (n4) -- (n1);
\end{tikzpicture}
\phantomsubcaption
\makeatletter
\def\@currentlabel{III}
\makeatother
\label{subfig:III}

\caption*{(III)}
\end{subfigure}

&
\begin{subfigure}[t]{0.19\textwidth}
\centering
\begin{tikzpicture}[baseline=(current bounding box.center), scale=2.2,
  v/.style={circle, fill=black, inner sep=1.5pt},
  e/.style={line width=0.8pt, <-, >=Stealth}
]
  \node[v, label=below:$1$]  (n1) at (0,0) {};
  \node[v, label=below:$2$] (n2) at (1,0) {};
  \node[v, label=above:$3$]       (n3) at (0.5,{sqrt(3)/2}) {};
  \node[v, label=below:$4$]       (n4) at (0.5,{sqrt(3)/6}) {};
  \draw[e] (n2) -- (n4);
  \draw[e] (n4) -- (n3);
  \draw[e] (n3) -- (n2);
\end{tikzpicture}
\phantomsubcaption
\makeatletter
\def\@currentlabel{IV}
\makeatother
\label{subfig:IV}

\caption*{(IV)}
\end{subfigure}
&
\begin{subfigure}[t]{0.19\textwidth}
\centering
\begin{tikzpicture}[baseline=(current bounding box.center), scale=2.2,
  v/.style={circle, fill=black, inner sep=1.5pt},
  e/.style={line width=0.8pt, <-, >=Stealth}
]
  \node[v, label=below:$1$]  (n1) at (0,0) {};
  \node[v, label=below:$2$] (n2) at (1,0) {};
  \node[v, label=above:$3$]       (n3) at (0.5,{sqrt(3)/2}) {};
  \node[v, label=below:$4$]       (n4) at (0.5,{sqrt(3)/6}) {};
  \draw[e] (n1) -- (n3);
  \draw[e] (n4) -- (n1);
  \draw[e] (n3) -- (n2);
  \draw[e] (n2) -- (n4);
\end{tikzpicture}
\phantomsubcaption
\makeatletter
\def\@currentlabel{V}
\makeatother
\label{subfig:V}

\caption*{(V)}
\end{subfigure}

\\[3mm]

%==================== Bottom row ====================

\begin{subfigure}[t]{0.18\textwidth}
\centering
\includegraphics[width=1.15\linewidth]{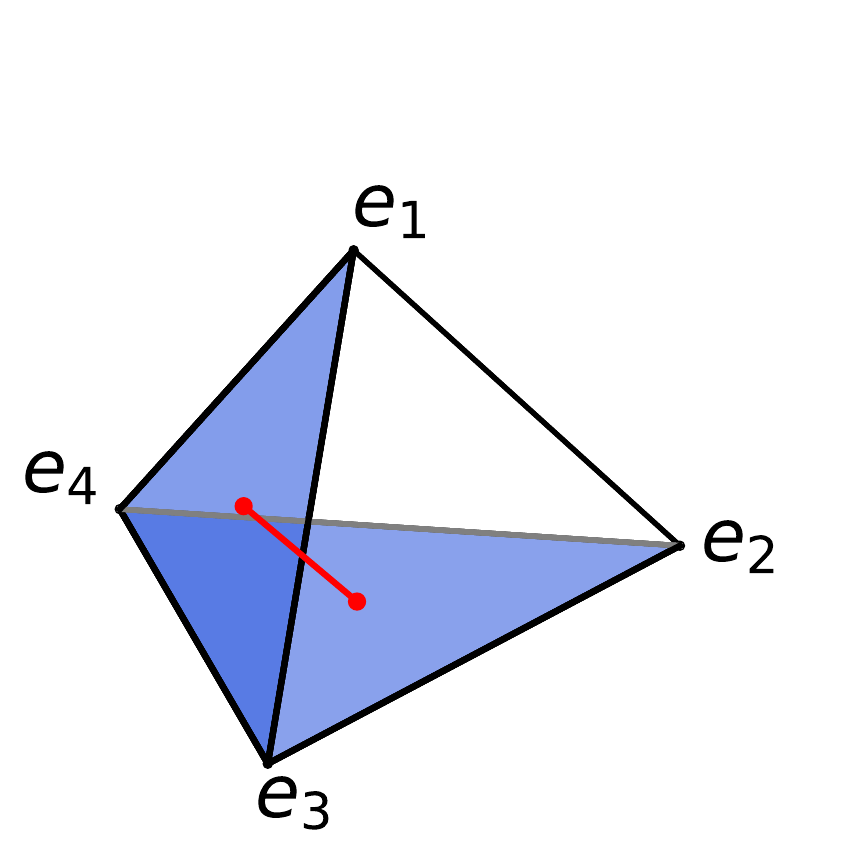}
\caption*{}
\end{subfigure}
\phantomsubcaption\label{subfig:2a}
&
\begin{subfigure}[t]{0.18\textwidth}
\centering
\includegraphics[width=1.15\linewidth]{Figures/inter_case_a.pdf}
\caption*{}
\end{subfigure}
\phantomsubcaption\label{subfig:2b}
&
\begin{subfigure}[t]{0.18\textwidth}
\centering
\includegraphics[width=1.15\linewidth]{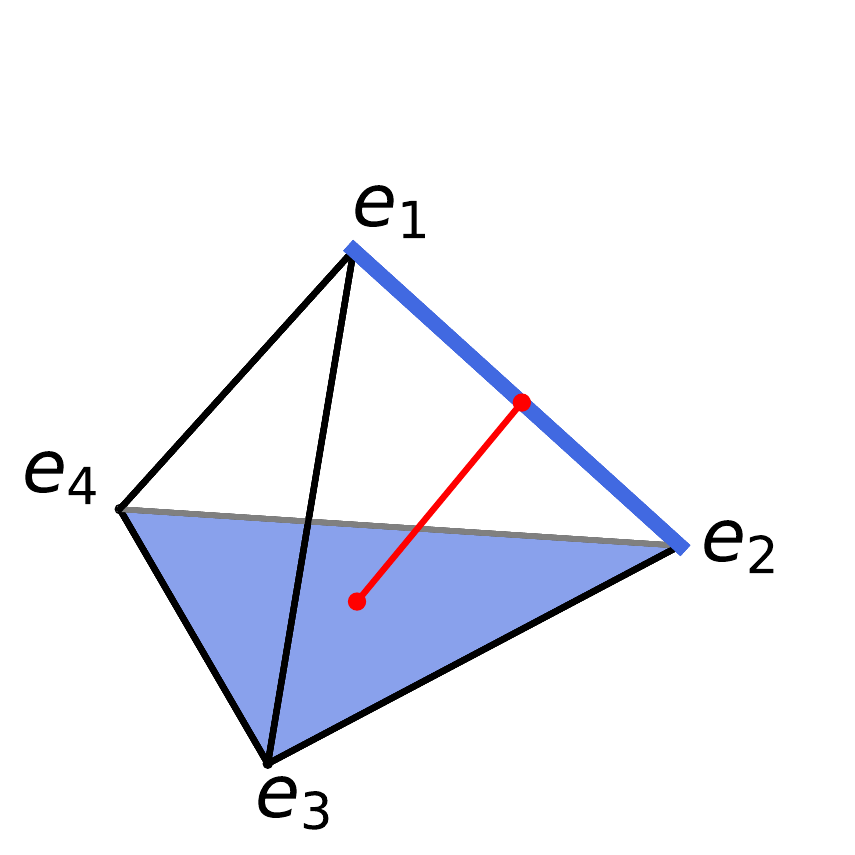}
\caption*{}
\end{subfigure}
\phantomsubcaption\label{subfig:2c}
&
\begin{subfigure}[t]{0.18\textwidth}
\centering
\includegraphics[width=1.15\linewidth]{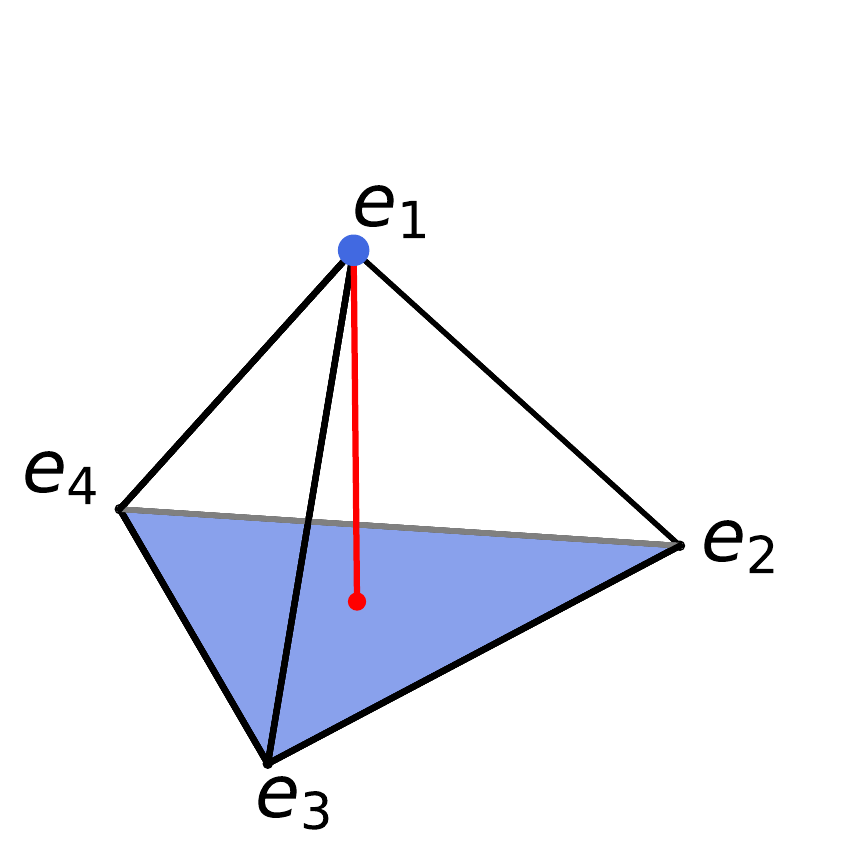}
\caption*{}
\end{subfigure}
\phantomsubcaption\label{subfig:2d}
&
\begin{subfigure}[t]{0.18\textwidth}
\centering
\includegraphics[width=1.15\linewidth]{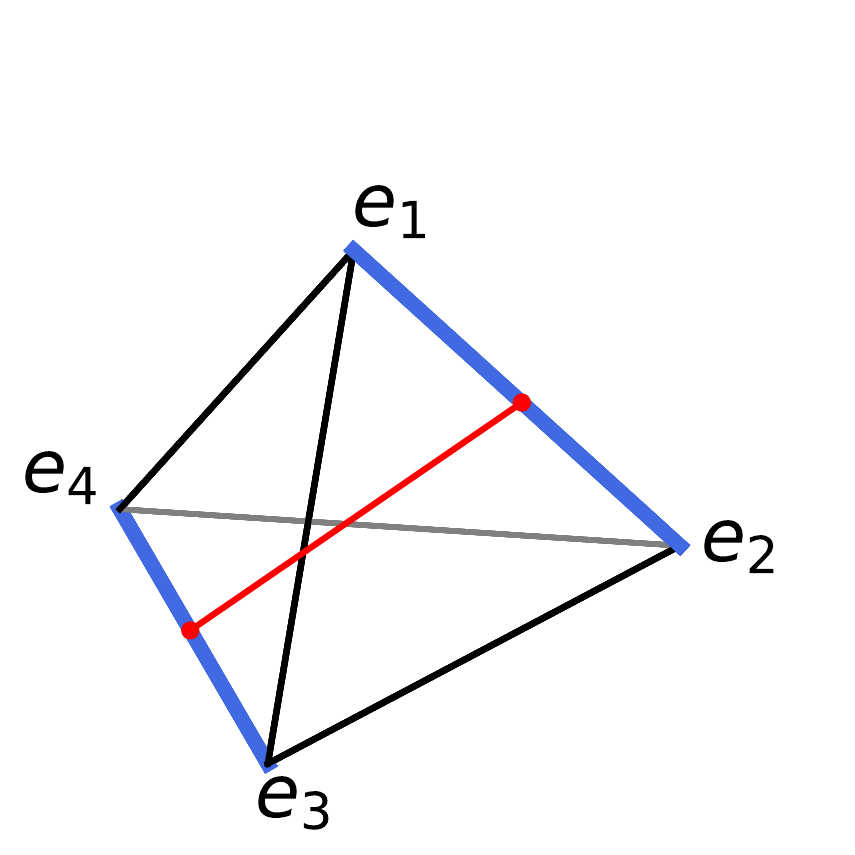}
\caption*{}
\end{subfigure}
\phantomsubcaption\label{subfig:2e}

\end{tabular}

\caption{
The five isomorphism classes of $\mathcal{G}_A$ that contains cycle. The top row shows the graph $\mathcal{G}_A$. 
Among these,~(\ref{subfig:I}) and~(\ref{subfig:II}) each contain two directed $3$-cycles;~(\ref{subfig:III}) contains both a directed $3$-cycle and a directed $4$-cycle;~(\ref{subfig:IV}) contains exactly one directed $3$-cycle; and~(\ref{subfig:V}) is a directed $4$-cycle. 
The bottom row shows the corresponding set $L$ in $\Delta^3$, matched columnwise.}
\label{fig:combined_cases}
\end{figure*}

% {\color{red}{increase distance of $e_4$; }}

Up to isomorphism, there are five different digraphs $\mathcal{G}_A$ that can contain a cycle. We present these graphs in Fig.~\ref{fig:combined_cases}. 
To every such digraph, we characterize below how the straight line $L$ intersects $\Delta^3$ (under the condition $\det(A) = 0$).  

% { {fix the graph}}

% {\it{Case $1(a)$: Two endpoints lie on the interior of two different faces.}} We have the following lemma.

\begin{Theorem}
\label{thm:fineclass}
Given a skew-symmetric $A$ with $\det(A)=0$, and let $\mathcal{G}_A$ be the associated digraph. Then, the following hold:
\begin{enumerate}
    \item If $\mathcal{G}_A$ is isomorphic to the digraph~(\ref{subfig:I}) or~(\ref{subfig:II}) in Fig.~\ref{fig:combined_cases}, then there exist two distinct $i,j\in \mathcal{V}$ such that the two endpoints of $L$ belong to $\rint F_{-i}$ and $\rint F_{-j}$ respectively. 
    \item 
    If $\mathcal{G}_A$ is isomorphic to the digraph~(\ref{subfig:III}) in Fig.~\ref{fig:combined_cases}, then there exist two distinct $i,j\in \mathcal{V}$ such that the two endpoints of $L$ belong to $\rint F_{-i}$ and $\rint E_{ij}$ respectively. 
    \item If $\mathcal{G}_A$ is isomorphic to the digraph~(\ref{subfig:IV}) in Fig.~\ref{fig:combined_cases}, then there exists an $i\in \mathcal{V}$ such that the two endpoints of $L$ belong to $\rint F_{-i}$ and $\{e_i\}$ respectively. 
    \item If $\mathcal{G}_A$ is isomorphic to the digraph~(\ref{subfig:V}) in Fig.~\ref{fig:combined_cases}, then there exist two non-incident edges $E_{ij}$ and $E_{k\ell}$ such that the two endpoints of $L$ belong to $\rint E_{ij}$ and $\rint E_{k\ell}$ respectively. 
\end{enumerate}

\end{Theorem}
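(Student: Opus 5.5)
We treat the four digraph types separately. In each case we use Lemmas~\ref{lem:3cycle}, \ref{lem:4cycle} and~\ref{lem:twoedge4cycle} to locate the endpoints of the segment $L\cap\Delta^3$. Throughout, $\det(A)=0$, $A\neq 0$ and $\mathcal{G}_A$ has a cycle, so by Theorem~\ref{thm:main2} the set $K$ is nonempty. Hence $L\cap\Delta^3$ is a closed segment meeting $\rint\Delta^3$, and its two endpoints lie on $\partial\Delta^3$. The basic observation is the following. An endpoint either lies in the relative interior of some face $F_{-i}$, where $L$ crosses transversally because it also passes through $\rint\Delta^3$, or it lies in the relative interior of an edge, or it is a vertex. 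By Lemma~\ref{lem:3cycle}, the endpoints of the first kind correspond exactly to the indices $i$ for which $\mathcal{V}-\{i\}$ induces a $3$-cycle. By Lemma~\ref{lem:4cycle}, endpoints of the second kind, lying in $\rint E_{ij}$, occur exactly when $a_{ij}=0$ and $\mathcal{G}_A$ contains a $4$-cycle.

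\textbf{Cases (I) and (II).} We read off from Fig.~\ref{fig:combined_cases} that there are two distinct indices $i,j$ whose complements induce $3$-cycles. Lemma~\ref{lem:3cycle} then gives a unique point of $L$ in $\rint F_{-i}$ and a unique point in $\rint F_{-j}$. These two points are distinct, since $\rint F_{-i}\cap\rint F_{-j}=\varnothing$, and both lie in $L\cap\Delta^3$. Because the line $L$ meets $\partial\Delta^3$ in exactly two points whenever it passes through the interior, these must be the two endpoints.

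\textbf{Case (III).} Here exactly one complement induces a $3$-cycle, which gives one endpoint in $\rint F_{-i}$. The graph also has a $4$-cycle and exactly one missing edge $\{i,j\}$, which from the figure is $\{1,2\}$, with $\{2,3,4\}$ inducing the $3$-cycle. Lemma~\ref{lem:4cycle} then gives $L\cap\rint E_{ij}\neq\varnothing$, and this point is the second endpoint. I need to check that the labels match the statement: the face endpoint lies on $F_{-i}$ and the edge is $E_{ij}$, where $i$ is the vertex outside the $3$-cycle and $j$ is the other endpoint of the missing edge. This is consistent, because $E_{12}\not\subset F_{-1}$ and the two points are therefore distinct.

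\textbf{Case (V).} This follows immediately from Lemma~\ref{lem:twoedge4cycle}, together with the same two-boundary-points argument.

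\textbf{Case (IV).} This is the main obstacle, since no earlier lemma treats vertex endpoints. Suppose the $3$-cycle is on $\{2,3,4\}$ and vertex $1$ is isolated, so $a_{12}=a_{13}=a_{14}=0$. Then $e_1\in\nll(A)$, so $e_1\in L$. Lemma~\ref{lem:3cycle} gives a second point $z\in L\cap\rint F_{-1}$. Therefore $L$ is the line through $e_1$ and $z$, and $L\cap\Delta^3$ is the segment $[z,e_1]$, whose open part lies in $\rint\Delta^3$. I would verify directly that the isolated vertex is forced. Node $1$ has no edges in the drawing, but I also need $\det(A)=0$ to be consistent with this, which it is because $a_{12}a_{34}-a_{13}a_{24}+a_{14}a_{23}=0$ holds automatically when the first row vanishes. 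Finally, I would argue that no other boundary points occur: a line meets the boundary of a convex set through whose interior it passes in exactly two points. This completes all cases.
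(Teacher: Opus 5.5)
Your proposal is correct and follows essentially the paper's route. You apply Lemmas~\ref{lem:3cycle}, \ref{lem:4cycle} and~\ref{lem:twoedge4cycle} case by case, and add the observation (left implicit in the paper) that a line through $\rint\Delta^3$ meets $\partial\Delta^3$ in exactly two points, so the boundary points you find must be the endpoints.

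The only local difference is Case~(IV). The paper argues by elimination: there is no $4$-cycle, hence no edge endpoint, hence the other endpoint is a vertex. You instead observe directly that the isolated vertex gives $Ae_1=0$, so $e_1\in L$ and $L\cap\Delta^3=[z,e_1]$. This is slightly cleaner, because it also pins down which vertex without the extra transversality argument the paper does not spell out.
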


\begin{proof}
Theorem~\ref{thm:fineclass} follows directly from Lemma~\ref{lem:3cycle}, \ref{lem:4cycle}, and \ref{lem:twoedge4cycle}. Indeed, the graphs~(\ref{subfig:I}) and~(\ref{subfig:II}) in Fig.~\ref{fig:combined_cases} each contains two directed $3$-cycles. Hence, by Lemma~\ref{lem:3cycle}, the set $L$ intersects transversally the relative interiors of two distinct faces. {  (Graph~(\ref{subfig:I}) also contains a directed $4$-cycle. However, since $a_{12}\neq 0$, Lemma~\ref{lem:4cycle} does not apply.)}

%The graph~(\ref{subfig:III}) in Fig.~\ref{fig:combined_cases} contains a directed $3$-cycle. In addition, unlike graph~(\ref{subfig:I}), graph~(\ref{subfig:III}) contains a directed $4$-cycle together with a missing edge between two vertices of the cycle. }Therefore, Lemma~\ref{lem:3cycle} yields an intersection of $L$ with the relative interior of a face, while Lemma~\ref{lem:4cycle} yields an intersection of $L$ with the relative interior of an edge. Moreover, $L\cap \rint \Delta^3\neq\varnothing$.
The graph~(\ref{subfig:III}) contains both a directed $3$-cycle and a $4$-cycle. Therefore, Lemma~\ref{lem:3cycle} yields an intersection of $L$ with the relative interior of a face, while Lemma~\ref{lem:4cycle} yields an intersection of $L$ with the relative interior of an edge, and moreover, $L\cap \rint \Delta^3\neq\varnothing$. 

The graph~(\ref{subfig:IV}) contains exactly one directed $3$-cycle and no directed $4$-cycle. Thus, by Lemma~\ref{lem:3cycle}, the set $L$ intersects transversally the relative interior of a unique face, whereas Lemmas~\ref{lem:4cycle} and \ref{lem:twoedge4cycle} do not apply. Consequently, the other endpoint of $K$ must be the vertex that does not belong to the face. 

Finally, the graph~(\ref{subfig:V}) is exactly a directed $4$-cycle. Hence, by Lemma~\ref{lem:twoedge4cycle}, the set $L$ intersects the relative interiors of two non-incident edges. This proves all four items.
\end{proof}

\section{System Behavior in the Interior}
\label{sec:sys_interior}
In this section, we assume that the skew-symmetric matrix $A$ satisfies the two items of Theorem~\ref{thm:main2}, so the set $K$ is nonempty. It is clear that $K$ comprises all the equilibria of~\eqref{eq:zsreplicator} in $\rint \Delta^3$. 
It has been shown that each equilibrium point $z\in K$ is stable in the Lyapunov sense, but not asymptotically stable (see, e.g.,~\cite[Theorem~4]{AkinLosert1984} for detail); {  in fact, using the same arguments of the paper, one can show that the two endpoints of $K$ are also stable, but not asymptotically stable. Note that the closure of $K$ is $L\cap \Delta^3$. We thus record the following result:

% {\color{red} find the exact theorem to refer to}

\begin{Lemma}
\label{lem:L_stability}
    Every point of $L\cap \Delta^3$ is stable but not asymptotically stable.
\end{Lemma}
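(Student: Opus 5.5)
We prove stability with the standard relative-entropy Lyapunov function for zero-sum replicator dynamics, and we rule out asymptotic stability by exhibiting a continuum of equilibria.

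\emph{Conserved quantity at interior points.} Fix $z\in L\cap\Delta^3$ and let $S(z):=\{i\mid z_i>0\}$ be its support. Define
$$V_z(x):=\sum_{i\in S(z)} z_i\log\frac{z_i}{x_i},$$
which is the Kullback--Leibler divergence from $z$ to $x$. For $x$ with $x_i>0$ on $S(z)$,
$$\dot V_z=-\sum_{i\in S(z)} z_i (Ax)_i=-z^\top Ax .$$
Skew-symmetry and $Az=0$ give $z^\top A x=-(Az)^\top x=0$. Hence $V_z$ is conserved along every trajectory for which it is defined. For $z\in K$ this is the argument of~\cite{AkinLosert1984}.

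\emph{Stability at boundary points of $L\cap\Delta^3$.} For the endpoints, $S(z)$ is a proper subset of $\mathcal{V}$. Trajectories starting on the faces $\{x_i=0\}$ with $i\in S(z)$ are never near $z$, so we restrict to a neighborhood of $z$ in $\Delta^3$. There $x_j$ is small for $j\notin S(z)$, and the coordinates $x_i$, $i\in S(z)$, are bounded away from zero, so $V_z$ is well defined and conserved.

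The key step is that $V_z$ is positive definite relative to the sub-simplex spanned by $S(z)$. By Pinsker's inequality,
$$V_z(x)\ge \tfrac12\Big(\sum_{i\in S(z)}|z_i-x_i| + \big(1-\textstyle\sum_{i\in S(z)}x_i\big)\Big)^2,$$
where the term $1-\sum_{i\in S(z)}x_i$ is the mass on coordinates outside $S(z)$. Thus small $V_z$ forces $x$ close to $z$. Conversely, $V_z$ is continuous at $z$ with $V_z(z)=0$, so initial conditions near $z$ have small $V_z$. Conservation then keeps the trajectory in a small neighborhood, which gives Lyapunov stability.

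\emph{Boundary trajectories.} A trajectory starting on a face containing $z$ stays in that face, and the same computation applies. Along it, coordinates outside $S(z)$ stay small precisely because $V_z$ controls the total mass $1-\sum_{i\in S(z)}x_i$.

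\emph{No asymptotic stability.} Every point of $L\cap\Delta^3$ is an equilibrium: points of $L$ satisfy $Az=0$, hence $(Az)_i=0$ and $\dot x_i=0$. Since $L\cap\Delta^3$ is a nondegenerate segment, every neighborhood of $z$ contains other equilibria of this segment. These do not converge to $z$, so $z$ is not attracting.

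\emph{Main obstacle.} The delicate step is the endpoint case, where $z$ lies on $\partial\Delta^3$ and $V_z$ involves only a subset of coordinates. There one must check carefully that the Pinsker-type lower bound uniformly controls the coordinates outside $S(z)$, which is what makes conservation of $V_z$ confine nearby trajectories.
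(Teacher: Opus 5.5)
Your proof is correct and takes essentially the same route the paper relies on: the relative entropy $\phi_z$ (summed only over the support of $z$ at the endpoints) is conserved because $z^\top Ax=0$, its positive definiteness gives Lyapunov stability, and the nondegenerate segment $L\cap\Delta^3$ of equilibria rules out asymptotic stability. The paper only cites Akin--Losert for $z\in K$ and asserts that the same argument covers the endpoints, so your Pinsker bound $\phi_z(x)\ge\tfrac12\|z-x\|_1^2$ is a clean, explicit way of supplying that endpoint step.
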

}

Furthermore, it is known~\cite{BoonePiliouras2019WINE} 
that the dynamics~\eqref{eq:zsreplicator} is recurrent in $\rint \Delta^3$ (note that $\rint \Delta^3$ is forward invariant), i.e., for any $x(0) \in \rint \Delta^3$ and for any $\epsilon > 0$, there exists a time instant $t$ such that $\|x(t) - x(0) \|< \epsilon$.  

We strengthen the above recurrence statement by showing that 
every point $x(0)\in \rint\Delta^3 - K$ belongs to a periodic orbit: 

\begin{Theorem}
\label{thm:periodic_orbit}
    Every point in the set $\rint \Delta^3 - K$ belongs to a periodic orbit of~\eqref{eq:zsreplicator}. Moreover, any such periodic orbit is stable. 
\end{Theorem}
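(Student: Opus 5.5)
Since $K\neq\varnothing$ is an open segment of $L=\nll(A)\cap H$, I will use the classical relative-entropy first integrals. For each $z\in L\cap\Delta^3$, define on $\rint\Delta^3$
\[
V_z(x):=-\sum_{i=1}^4 z_i\log x_i ,
\]
with the convention $0\log x_i=0$. Along~\eqref{eq:zsreplicator} we get $\dot V_z=-\sum_i z_i (Ax)_i=-z^\top A x = x^\top A z=0$, so every $V_z$ is conserved. Parametrize $L\cap\Delta^3$ as $z(s)=(1-s)z^0+sz^1$, where $z^0,z^1$ are the endpoints described in Theorem~\ref{thm:fineclass}. Then $V_{z(s)}=(1-s)V_{z^0}+sV_{z^1}$ is affine in $s$, so the whole family is spanned by the two conserved quantities $V_{z^0}$ and $V_{z^1}$.

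The approach is to show that each level set of $V_z$, for $z\in K$, is a smooth 2-sphere, and then intersect two such level sets transversally.

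\begin{itemize}
\item \textbf{Spheres.} For $z\in K$, $V_z$ is strictly convex on $\rint\Delta^3$ and tends to $+\infty$ at $\partial\Delta^3$. Its unique critical point on $H$ is $z$ itself, by the Lagrange condition $z_i/x_i=\text{const}$. Hence every level $\{V_z=c\}$ with $c>V_z(z)$ is a compact strictly convex surface, diffeomorphic to $S^2$.
\item \textbf{Coordinate change.} Fix $x(0)\in\rint\Delta^3-K$ and two distinct points $z,z'\in K$. The orbit lies in $M:=\{V_z=c,\ V_{z'}=c'\}$, where $c,c'$ are the values at $x(0)$. $M$ is compact because it sits inside the sphere $\{V_z=c\}$, which stays away from the boundary.
\item \textbf{Transversality.} The gradients of $V_z$ and $V_{z'}$, projected to $T H=\mathbf 1^\perp$, are $-(z_i/x_i)_i$ and $-(z'_i/x_i)_i$ modulo $\mathbf 1$. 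They are linearly dependent exactly when $\alpha z+\beta z'=\gamma x$ componentwise after dividing by $x$. This says $x$ lies in the span of $z,z'$ intersected with $H$, i.e., $x\in L$. So for $x\notin K$ the differential of $(V_z,V_{z'})\colon \rint\Delta^3\to\R^2$ has rank $2$ along $M$, and $(c,c')$ is a regular value on $M$.
\item \textbf{Circles.} By the preimage theorem, $M$ is a compact 1-manifold, i.e., a finite disjoint union of circles. The orbit of $x(0)$ lies in one component $C$. The vector field is nonvanishing on $C$, since the only interior equilibria are the points of $K$ and $C\cap K=\varnothing$. A nowhere-vanishing tangent field on a circle traverses the whole circle in finite time, so the orbit equals $C$ and is periodic.
\end{itemize}

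For Lyapunov stability of the periodic orbit $C$, I will use the function
\[
W(x):=\bigl(V_z(x)-c\bigr)^2+\bigl(V_{z'}(x)-c'\bigr)^2 .
\]
It is conserved, vanishes exactly on $M$, and is positive elsewhere. Since $C$ is an isolated component of $M$, a small tubular neighborhood $U$ of $C$ satisfies $M\cap \overline U=C$. By regularity of $(c,c')$, the map $(V_z,V_{z'})$ is a submersion near $C$. By the local submersion structure, for small $\delta$ the component of $\{W<\delta\}$ containing $C$ lies within any prescribed neighborhood of $C$. Because $W$ is conserved, trajectories starting in that component stay in it, which gives Lyapunov stability of $C$ as a set.

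I expect the main obstacle to be the transversality step. I need the rank computation to be correct in the tangent space of $H$, rather than in $\R^4$, and I need it to hold on all of $\rint\Delta^3-K$. Points of $L$ outside $K$ do not occur in the interior, so this should be fine. A subtler point is the choice of $z,z'$ so that both level values are regular, but that is automatic from the computation above.
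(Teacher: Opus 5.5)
Your route is the paper's: the same relative-entropy first integrals (your $V_z$ differs from the paper's $\phi_z$ only by the constant $\sum_i z_i\log z_i$), the same rank computation, the preimage theorem, and the same conserved function $W$ for stability. The gap is the step you flagged as subtle and then called automatic. Your rank computation shows only that $d(V_z,V_{z'})$ has rank $2$ at points of $\rint\Delta^3$ outside $K$. To conclude that $(c,c')$ is a regular value on $M$, that $M$ is a $1$-manifold, and that the vector field does not vanish on $C$, you need $M\cap K=\varnothing$. You simply assert $C\cap K=\varnothing$, and nothing in your proposal establishes it.

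The issue is not vacuous. Since $V_z$ restricted to the open segment $K$ is strictly convex and blows up at both ends, the sphere $\{V_z=c\}$ always meets $K$ in exactly two points. At any point of $K$ the two tangential gradients are dependent. The paper isolates exactly this as Lemma~\ref{lem:two_z}, which it proves by a case analysis that selects $z''$ according to how $\phi_{z'}^{-1}(c')$ meets $K$.

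You already have the ingredient to close the gap, and more simply than the paper does. Since $V_w$ is linear in $w$, for $w=(1-s)z+sz'\in K$ you get $V_w=(1-s)V_z+sV_{z'}$, so $V_w\equiv V_w(x(0))$ on $M$. If $w\in M\cap K$, then $V_w(w)=V_w(x(0))$, i.e., $\phi_w(x(0))=0$. But $w$ is the unique minimizer of $V_w$ on $\rint\Delta^3$ by your Spheres step, so $x(0)=w\in K$, a contradiction. Hence $M\cap K=\varnothing$ for \emph{every} pair of distinct $z,z'\in K$. In fact $M$ does not depend on the pair at all, so the careful choice of $z''$ in the paper is unnecessary.

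With this inserted, you also need to handle the case $\gamma=0$ of your transversality computation. It follows from the linear independence of the distinct points $z,z'\in H$. The rest of your argument then goes through as in the paper. Your explicit remark that a nowhere-vanishing tangent field on a circle yields a periodic orbit is a point the paper leaves implicit.
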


\begin{Remark}\normalfont
    {  %We provide a geometric interpretation for the periodic orbit.  
    %A geometric interpretation of $K$ has been provided in the literature: %It is well known in the literature  that 
    It is well known (see, e.g., Section~2.3 of~\cite{HofbauerSigmund2003}) that the time-average (over a period) of a periodic orbit is an equilibrium point. Moreover, if the orbit is in the interior of the simplex, then the associated equilibrium point is also in the interior and is a Nash equilibrium of the population game induced by the payoff matrix/function. 
    In the present setting, the time average of any interior periodic orbit  of~\eqref{eq:zsreplicator} belongs to~$K$.}
    More precisely, let $\gamma$ be a periodic orbit in $\rint \Delta^3$ with period $T_\gamma$. Let $x(0)\in \gamma$ and we define the time-average of the state:
    $$
    \bar x_\gamma:=\frac{1}{T_\gamma} \int_0^{T_\gamma} x(t) \mathrm{d}t.
    $$
    It should be clear that $\bar x_\gamma$ depends only on $\gamma$, but not on a specific choice of $x(0)$. We have $\bar x_\gamma \in K$; see Fig.~\ref{fig:orbit} for illustration.% It is known~\cite{hofbauer2003evolutionary} that $\bar x_\gamma \in K$. 
    %{  This gives a geometric interpretation of $K$: although the trajectory evolves along a periodic orbit in $\rint\Delta^3-K$, its time average lies on the equilibrium continuum $K$.}
\end{Remark}

\begin{figure}[h]
    \centering
    \includegraphics[width=0.6\linewidth, trim={2cm 1cm 5.5cm 6cm}, clip]{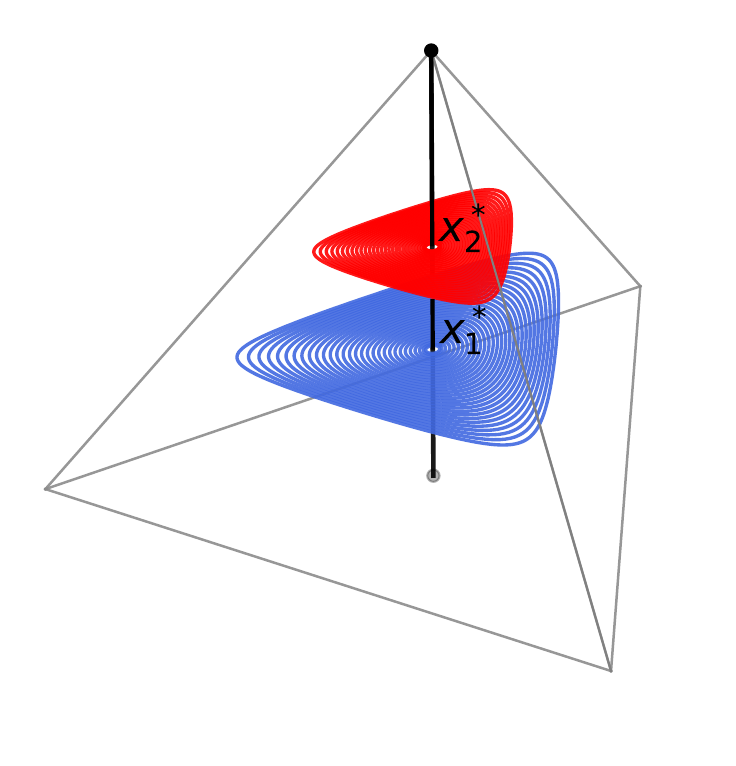}
    \caption{{ Periodic orbits and their time averages for the replicator dynamics induced by a payoff matrix $A$ whose digraph is isomorphic to~(\ref{subfig:IV}) in Fig.~\ref{fig:combined_cases}. 
    The time average of each blue (resp., red) periodic orbit is $x_1^*$ (resp., $x_2^*$), where $x_1^*$ and $x_2^*$ are two equilibria in the relative interior of the simplex belonging to the segment $K$ plotted in black.}}
    \label{fig:orbit}
\end{figure}

% \subsection{Stability of the Equilibrium Continuum}
% \label{subsec:stability_K}

% In this subsection, we establish the following result:

% \begin{Theorem}\label{thm:neutral_stability}
% %Let matrix $A$ be defined as in~\eqref{eq:entriesofA}. 
% % and consider the replicator dynamics \eqref{eq:zsreplicator}.
% Each equilibrium point $z\in K$ is stable, but not asymptotically stable.
% \end{Theorem}

% To prove the theorem, 
{  We provide here a sketch of the proof for Theorem~\ref{thm:periodic_orbit}. 
To this end, we will introduce a family of strictly convex functions $\phi_{z}(x)$ (see~\eqref{eq:def_phi} below), parameterized by $z\in K$. 
These functions turn out to be invariant under the dynamics~\eqref{eq:zsreplicator}. 
Then, we will show that for any $x(0)\in \rint \Delta^3 - K$, there exist two distinct points $z',z''\in K$ such that the intersection of the two iso-level sets $\phi^{-1}_{z'}(\phi_{z'}(x(0)))$ and $ \phi^{-1}_{z''}(\phi_{z''}(x(0)))$, which we denote by $\Gamma_{z',z''}(x(0))$, does not intersect $K$. Moreover, we will show that $\Gamma_{z',z''}(x(0))$ is a compact, smooth one-dimensional manifold without boundary. 
It thus follows that the connected component of $\Gamma_{z',z''}(x(0))$ containing $x(0)$ is diffeomorphic to $S^1$ and has to be a periodic orbit.
Finally, we will use the invariant functions $\phi_{z'}(x)$ and $\phi_{z''}(x)$ to construct a Lyapunov function whereby we establish the stability of the orbit. 
}

% The proof uses two conserved quantities, $\phi_{z'}$ and $\phi_{z''}$, whose properties are summarized in Lemma~\ref{lem:phi_z}. We confine the trajectory from $x(0)\in\rint\Delta^3-K$ to the common level set $\Gamma_{z',z''}(x(0))$, which is the intersection of $\phi^{-1}_{z'}(\phi_{z'}(x(0)))$ and $ \phi^{-1}_{z''}(\phi_{z''}(x(0)))$. 
% We show in Lemma~\ref{lem:two_z} that $z',z''\in K$ can be chosen such that $\Gamma_{z',z''}(x(0))$ is disjoint from $K$, and then prove in Lemma~\ref{lem:regular_value} that it is a compact smooth one-dimensional submanifold of $\rint\Delta^3$. Hence,  the connected component $\gamma$ containing $x(0)$ is diffeomorphic to $S^1$. 
% Since $\gamma\cap K=\varnothing$, the vector field has no equilibrium on $\gamma$, and the induced flow is periodic. 
% Finally, the stability of $\gamma$ follows from the same conserved quantities, which yield an invariant function $V$ such that, for sufficiently small $\rho>0$, 
% the sublevel set $\{x:V(x)<\rho\}$ is a forward-invariant neighborhood of $\gamma$. The full proof of Theorem~\ref{thm:periodic_orbit} is provided at the end of this section.

We now introduce the family of functions $\phi_z : \rint\Delta^3 \to \R$, parameterized by $z\in K$ as
\begin{equation}\label{eq:def_phi}
\phi_z(x) := -\sum_{i=1}^4 z_i \ln\frac{x_i}{z_i}.
\end{equation}
We gather below a few relevant properties of the function $\phi_z$ (see, e.g.,~\cite{HofbauerSigmund2003}).

%and then use them to characterize the connected components of the level sets of $F_{z',z''}(x)$.
\begin{Lemma}
\label{lem:phi_z}
    The following items hold:
    \begin{enumerate}
        \item The function $\phi_z(x)$ is strictly convex, with $x = z$ the global minimum point and $\phi_z(z) = 0$. Moreover, $\phi_z(x)$ goes to $+\infty$ as $x$ approaches  $\partial \Delta^3$.
        
        \item The function $\phi_z(x)$ is invariant under the dynamics~\eqref{eq:zsreplicator}, i.e., $\phi_z(x(t))=\phi_z(x(0))$, for any $x(0)\in \rint\Delta^3$ and for any $t\geq 0$.
    \end{enumerate}
\end{Lemma}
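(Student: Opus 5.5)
The plan is to treat $\phi_z$ as a function on the open convex set $\rint\Delta^3\subset H$ and prove the two items separately by direct computation.

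For item 1, I would write $\phi_z(x) = -\sum_i z_i\ln x_i + \sum_i z_i\ln z_i$. The Hessian in $\R^4$ is $\diag(z_i/x_i^2)$. This matrix is positive definite because $z\in K\subset\rint\Delta^3$ gives $z_i>0$. Hence $\phi_z$ is strictly convex on $\R^4_{>0}$, and therefore also on its restriction to the affine slice $H$.

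For the minimum, I would apply Jensen's inequality to the concave function $\ln$ with weights $z_i$:
$$
-\phi_z(x)=\sum_i z_i\ln\frac{x_i}{z_i}\le \ln\sum_i z_i\frac{x_i}{z_i}=\ln\sum_i x_i=\ln 1=0.
$$
Equality holds iff $x_i/z_i$ is constant, which together with $\mathbf 1^\top x=\mathbf 1^\top z=1$ forces $x=z$. So $z$ is the unique global minimizer and $\phi_z(z)=0$.

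For the boundary behaviour, suppose $x\to\bar x\in\partial\Delta^3$, so some $x_j\to0$. The term $-z_j\ln(x_j/z_j)$ then tends to $+\infty$ since $z_j>0$. The remaining terms are bounded below, because $x_i\le1$ gives $-z_i\ln(x_i/z_i)\ge z_i\ln z_i$. Hence $\phi_z(x)\to+\infty$.

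For item 2, I would differentiate along trajectories of~\eqref{eq:zsreplicator} in $\rint\Delta^3$, which is forward invariant, so $x_i(t)>0$ for all $t\ge0$. This gives
$$
\frac{d}{dt}\phi_z(x(t))=-\sum_i z_i\frac{\dot x_i}{x_i}=-\sum_i z_i(Ax)_i=-z^\top A x .
$$
By skew-symmetry, $z^\top A x = -(Az)^\top x$. Since $z\in K\subset\nll(A)$, we have $Az=0$, so the derivative vanishes identically and $\phi_z(x(t))=\phi_z(x(0))$ for all $t\ge0$.

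There is no real obstacle here; the only point needing care is that both items use $z\in\rint\Delta^3$, i.e.\ $z_i>0$ for all $i$. This is what makes the Hessian positive definite, makes every coordinate term blow up at the boundary, and keeps the logarithms well defined. It is exactly why the family is parameterized by $K$ rather than by all of $L\cap\Delta^3$.
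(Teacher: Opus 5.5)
Your proof is correct and is the standard relative-entropy argument: strict convexity from the Hessian, Jensen's inequality for the unique minimizer, blow-up of the $z_j\ln x_j$ term at the boundary, and $\frac{d}{dt}\phi_z(x(t)) = -z^\top A x = (Az)^\top x = 0$ by skew-symmetry. The paper does not prove this lemma itself; it cites Hofbauer--Sigmund, where this is the argument used, so your proposal matches.
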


For any two distinct points $z',z''\in K$, we define $F_{z',z''}:\rint \Delta^3 \to \R^2$ as follows: 
$$
F_{z',z''}(x):=\bigl(\phi_{z'}(x),\phi_{z''}(x)\bigr).
$$
By item~2 of Lemma~\ref{lem:phi_z}, $F_{z',z''}$ is forward invariant along the dynamics~\eqref{eq:zsreplicator}, so $F_{z',z''}(x(t)) = F_{z',z''}(x(0))$ for all $t\geq 0$. 
% Given a constant $c > 0$, we define the sub-level set of $\phi_z$ and its boundary as:
% $$
% \phi_z^{-1}([0,c]) := \{x \in \rint\Delta^3 : \phi_z(x) \le c\}
% \mbox{ and }
% \phi_z^{-1}(c) := \{x \in \rint\Delta^3 : \phi_z(x) = c\}.
% $$
%For ease of notation, 
We consider the preimage: 
{ \begin{equation*}
\Gamma_{z',z''}(x(0))  :=F_{z',z''}^{-1}(F_{z',z''}(x(0))) 
 = \phi^{-1}_{z'}(\phi_{z'}(x(0))) \cap \phi^{-1}_{z''}(\phi_{z''}(x(0))).
\end{equation*}}
% {  where we denote $\phi_{z}^{-1}\left(\phi_{z}(x(0))\right)$ as $\phi_{z}^{-1}(x(0))$ with slight abuse of notation.} 
The following lemma shows that $z'$ and $z''$ can be chosen so that
$\Gamma$ is disjoint from $K$.

% We next show that, for any $x(0)\in\rint\Delta^3-K$, one can always choose two equilibria $z',z''\in K$ such that the intersection of the two level sets has no intersection with $K$. Specifically, we have

\begin{Lemma}
\label{lem:two_z}
    For any given $x(0)\in \rint \Delta^3-K$, there exist two distinct points $z', z''\in K$ such that $\Gamma_{z',z''}(x(0))$ does not intersect $K$.
\end{Lemma}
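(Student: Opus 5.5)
Parameterize $K$ affinely as $z(s) = z_0 + s w$ for $s$ in an open interval $I$, where $w\neq 0$ lies in $\nll(A)$ with $\mathbf 1^\top w = 0$. For fixed $x\in\rint\Delta^3$ the map $s\mapsto \phi_{z(s)}(x)$ is explicit and strictly convex in $s$. Indeed,
$$
\psi_x(s):=\phi_{z(s)}(x) = \sum_i z_i(s)\ln z_i(s) - \sum_i z_i(s)\ln x_i .
$$
The second term is affine in $s$. The first term is the negative entropy restricted to a line, so it is strictly convex with second derivative $\sum_i w_i^2/z_i(s) > 0$.

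A point $y\in\Gamma_{z',z''}(x(0))$ means that $\psi_y$ and $\psi_{x(0)}$ agree at $s'$ and $s''$. The difference
$$
\psi_y(s)-\psi_{x(0)}(s) = -\sum_i z_i(s)\ln\frac{y_i}{x_i(0)}
$$
is affine in $s$. Agreement at two distinct points therefore forces $\psi_y\equiv\psi_{x(0)}$ on all of $I$. Consequently, $\Gamma_{z',z''}(x(0))$ does not depend on the choice of $z'\neq z''$, and it equals $\{y : \phi_z(y)=\phi_z(x(0)) \ \forall z\in K\}$.

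Now suppose some $\bar z = z(\bar s)\in K$ lies in $\Gamma_{z',z''}(x(0))$. Then $\phi_{\bar z}(x(0)) = \phi_{\bar z}(\bar z) = 0$. By item~1 of Lemma~\ref{lem:phi_z}, $\phi_{\bar z}$ has its unique minimum $0$ at $\bar z$, so $x(0)=\bar z\in K$. This contradicts the hypothesis $x(0)\notin K$.

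The argument then reduces to: pick any two distinct $z',z''\in K$, which exist because $K$ is a nonempty open segment, and apply the observation above. Concretely, I will first verify the affine-difference identity, namely that $\phi_z(y)-\phi_z(x)$ is linear in $z$ for fixed $x,y$. From this I conclude that membership in $\Gamma$ forces equality of $\phi_z$ for every $z$ on the line through $z'$ and $z''$ within $K$, including $z=\bar z$. Finally I use positivity and the unique zero of $\phi_{\bar z}$.

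The main point to get right is the linearity step. $\phi_z(y)-\phi_z(x) = -\sum_i z_i\ln(y_i/x_i)$ is linear in $z$, and $\bar z$ is an affine combination of $z'$ and $z''$ with coefficients summing to $1$. So the vanishing of this difference at $z'$ and $z''$ propagates to $\bar z$ without needing strict convexity in $s$ at all. I expect no real obstacle beyond writing this cleanly. If needed, I would also note that the argument in fact shows $\Gamma\cap K=\varnothing$ for every choice of distinct $z',z''\in K$.
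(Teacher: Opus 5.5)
Your proof is correct, and it takes a cleaner route than the paper's.

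\textbf{The paper's route.} It fixes an arbitrary $z'\in K$, sets $\mathcal A=\{\alpha\in K\mid \phi_{z'}(\alpha)=\phi_{z'}(x(0))\}$, and splits into the cases $|\mathcal A|=0,1,2$. In the two-point case it introduces $\mathcal B_{\alpha'}=\{z\in K\mid \phi_z(\alpha')=\phi_z(x(0))\}$. It shows, via the affine function $\psi(c)=\phi_{z(c)}(x(0))-\phi_{z(c)}(\alpha')$ (which is positive at $c_{\alpha'}$), that each such set has at most one point. It then chooses $z''$ outside $\mathcal B_{\alpha'}\cup\mathcal B_{\alpha''}\cup\{z'\}$.

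\textbf{Your route.} You use the same ingredient: linearity of $z\mapsto\phi_z(y)-\phi_z(x)=-\sum_i z_i\ln(y_i/x_i)$, together with $\phi_{\bar z}>0$ away from $\bar z$. But you apply it directly to a hypothetical point $\bar z\in\Gamma\cap K$, which gives a contradiction for \emph{every} pair $z'\neq z''$.

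\textbf{What this buys.} It shows the paper's exceptional set is actually empty. Since $\alpha'\in\mathcal A$ means $\psi$ already vanishes at the parameter of $z'$, one has $\mathcal B_{\alpha'}=\mathcal B_{\alpha''}=\{z'\}$, so the case analysis is redundant. Your argument also gives the extra fact that $\Gamma_{z',z''}(x(0))=\{y\mid \phi_z(y)=\phi_z(x(0))\ \forall z\in K\}$ is independent of the chosen pair. That makes it transparent why Lemma~\ref{lem:regular_value} holds for any choice. The paper's genericity structure (choose $z''$ outside a finite bad set) would only be preferable if one could bound the bad set but not eliminate it. With exact linearity available, your route loses nothing.

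The strict-convexity computation in your first paragraph is not used in the final argument and can be dropped.
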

\begin{proof}
    Pick an arbitrary $z'\in K$, we define$$
    %c':=\phi_{z'}(x(0)),
    %\mbox{ and }
    \mathcal{A}:=\{\alpha\in K \mid \phi_{z'}(\alpha)=\phi_{z'}(x(0))\}.
    $$
Since $\phi_{z'}$ is strictly convex and $K$ is an open line segment, the restriction of $\phi_{z'}$ to $K$ is strictly convex, which implies that the set $\mathcal A$ contains at most two points.

For the case where $\mathcal{A}=\varnothing$, we can simply let $z''$ be any point in $K-\{ z'\}$, and the proof is done; indeed, if there exists a point
$z$ in $\Gamma_{z',z''}(x(0))\cap K$, then $\phi_{z'}(z)=\phi_{z'}(x(0))$, which implies that $z\in\mathcal A$, contradicting the fact that $\mathcal{A}=\varnothing$. 

We next deal with the case where $\mathcal{A}$ has only one point $\alpha$. 
First, note that $\alpha \neq z'$. This holds because $\phi_{z'}(\alpha) = \phi_{z'}(x(0)) > 0$ while $\phi_{z'}(z') = 0$ by item~1 of Lemma~\ref{lem:phi_z}.
Let $z'':= \alpha$. We show below that $\Gamma_{z',z''}(x(0))$ does not intersect $K$. Suppose that, say $z\in \Gamma_{z',z''}(x(0)) \cap K$; then,
\begin{equation}
\label{eq:phi_two_z}
\phi_{z'}(z) = \phi_{z'}(x(0)) \quad \mbox{and} \quad \phi_{z''}(z) = \phi_{z''}(x(0)).
\end{equation}
On the one hand, the first equality above, together with the fact that {  $\mathcal{A} = \{\alpha\}$}, implies that $z$ can only be $z = \alpha$. On the other hand, the second equality implies that $z\neq z''$ since $ \phi_{z''}(x(0)) > 0$, which is a contradiction. 

Finally, we deal with the case where $\mathcal{A}$ has two points and we write $\mathcal{A} = \{\alpha',\alpha''\}$. We introduce the following sets: 
 $$
 \begin{aligned}
 \mathcal{B}_{\alpha'} & :=\{z\in K \mid \phi_z(\alpha')=\phi_z(x(0))\}, \\
 \mathcal{B}_{\alpha''}& :=\{z\in K \mid \phi_z(\alpha'')=\phi_z(x(0))\}.
 \end{aligned}
 $$
 We first show that each of the above two sets contains at most one point.  
We prove the statement for $\mathcal{B}_{\alpha'}$, and the arguments will be exactly the same for the other.       
Let $z^+$ and $z^-$ be the two ending points of the open segment $K$. Then, for any $z\in K$, there exists a unique $c\in (0,1)$ such that $z=(1- c) z^{-}+cz^{+}$. In other words, we can parameterize the points of $K$ by $z(c):= (1- c) z^{-}+cz^{+}$. 
Consider the following affine function $\psi: (0,1)\to \R$ given by 
    \begin{equation*}
    \psi(c) :=\phi_{z(c)}(x(0))-\phi_{z(c)}(\alpha')
     =-\sum_{i=1}^4((1-c)z^{-}_i+cz^{+}_i)\ln(x_i(0)/\alpha'_i).
    \end{equation*}  
    We claim that the affine function is not identically zero. 
    To wit, first note that $\alpha'\in \mathcal{A}\subseteq K$, so there exists a unique 
   $c_{\alpha'}\in (0,1)$  such that $\alpha'=(1-c_{\alpha'})z^{-}+c_{\alpha'}z^{+}$.
     We evaluate $\psi$ at $c_{\alpha'}$ and obtain that
    $$\psi(c_{\alpha'})=\phi_{\alpha'}(x(0))-\phi_{\alpha'}(\alpha')=\phi_{\alpha'}(x(0))>0,$$
    where the last inequality follows from the fact that $x(0)\neq \alpha'$
    (this holds because $x(0)\notin K$ and $\alpha'\in K$) and item~1 of Lemma~\ref{lem:phi_z}. 
    We have thus established the claim.  
    Consequently, the equation $\psi(c)=0$ has at most one root in $(0,1)$, which implies that the set $\mathcal{B}_{\alpha'}$
    has at most one point. 

    We now let $z''$ be any point in $ K-(\mathcal{B}_{\alpha'}\cup\mathcal{B}_{\alpha''}\cup\{z'\})$, and prove that $\Gamma_{z',z''}(x(0))\cap K=\varnothing$. {  Suppose, to the contrary, that $$ \Gamma_{z',z''}(x(0))\cap K\neq\varnothing; $$ then, there exists \(z\in \Gamma_{z',z''}(x(0))\cap K\), and hence \(z\) satisfies~\eqref{eq:phi_two_z}. }%Suppose not, say $z\in \Gamma_{z',z''}(x(0))\cap K$; then, $z$ satisfies~\eqref{eq:phi_two_z}.
    The first equality of~\eqref{eq:phi_two_z} implies that $z\in \mathcal{A}$ (i.e., either $z = \alpha'$ or $z = \alpha''$) 
    while the second equality implies that $z''\in \mathcal{B}_{z}$, which contradicts the fact that $z''\notin \mathcal{B}_{\alpha'}\cup\mathcal{B}_{\alpha''}$.  
    This completes the proof. 
\end{proof}

In the sequel, we will fix $x(0)\in \rint \Delta^3 - K$ and the two distinct points $z', z''\in K$ which satisfy Lemma~\ref{lem:two_z}. 
For ease of notation, we will simply use $\Gamma$ to denote $\Gamma_{z',z''}(x(0))$. 
%We now show that for any such choice of $z'$ and $z''$ in the above lemma and any given $x(0)\in \rint\Delta^3-K$, 
We now have the following result: %show that $\Gamma$ is a smooth one-dimensional submanifold.

\begin{Lemma}\label{lem:regular_value}
%Let $x(0)\in\rint\Delta^3-K$, and two distinct points $z',z''\in K$ such that $\Gamma \cap K=\varnothing.$
% Then $dF_{x}:T_{x}\Delta^3\to\mathbb R^2$ has rank $2$ for all $x\in \Gamma$,
The set $\Gamma$ is a compact, smooth one-dimensional submanifold of
$\rint\Delta^3$ without boundary. In particular, $\Gamma$ has finitely many connected components, each of which is diffeomorphic to~$S^1$. 
\end{Lemma}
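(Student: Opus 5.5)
We will show that $F:=F_{z',z''}$ restricted to the $3$-dimensional manifold $\rint\Delta^3$ has $c:=F(x(0))$ as a regular value, i.e., that $dF_x$ has rank $2$ on $T_x H=\{v\in\R^4\mid \mathbf 1^\top v=0\}$ for every $x\in\Gamma$. The preimage theorem then makes $\Gamma$ a smooth embedded one-dimensional submanifold of $\rint\Delta^3$ without boundary. Compactness will come from item~1 of Lemma~\ref{lem:phi_z}. The sublevel set $\{x\in\rint\Delta^3\mid \phi_{z'}(x)\le \phi_{z'}(x(0))\}$ is closed in $\rint\Delta^3$, and since $\phi_{z'}\to+\infty$ at $\partial\Delta^3$ it stays a positive distance away from the boundary. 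Hence it is compact in $\R^4$. The set $\Gamma$ is a closed subset of it, so $\Gamma$ is compact. A compact one-manifold without boundary has finitely many components, each diffeomorphic to $S^1$ by the classification of compact one-manifolds.

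The main step is the rank computation. The gradients in $\R^4$ are $\nabla\phi_z(x)=-(z_1/x_1,\dots,z_4/x_4)$. Their restrictions to $T_xH$ are linearly dependent exactly when some nontrivial combination $\lambda\nabla\phi_{z'}(x)+\mu\nabla\phi_{z''}(x)$ is a multiple of $\mathbf 1$. Written out, this reads $(\lambda z'_i+\mu z''_i)/x_i=\kappa$ for all $i$, i.e.
\begin{equation*}
\lambda z'+\mu z''=-\kappa\, x .
\end{equation*}
Summing coordinates and using $\mathbf 1^\top z'=\mathbf 1^\top z''=\mathbf 1^\top x=1$ gives $\lambda+\mu=-\kappa$.

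If $\kappa\neq 0$, then $x$ is an affine combination of $z'$ and $z''$, so $x\in L$. Together with $x\in\rint\Delta^3$ this gives $x\in K$, which contradicts $\Gamma\cap K=\varnothing$ from Lemma~\ref{lem:two_z}. If $\kappa=0$, then $\lambda z'+\mu z''=0$ with $\lambda=-\mu$, so $\lambda(z'-z'')=0$. Since $z'\neq z''$, this forces $\lambda=\mu=0$, so the combination was trivial. Hence the rank is $2$ at every point of $\Gamma$.

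The only delicate point I anticipate is making sure the differential is taken on the tangent space of the affine hyperplane $H$ rather than on $\R^4$. Handling this via the Lagrange-type condition ``combination is parallel to $\mathbf 1$'', as above, settles it. It is also exactly where Lemma~\ref{lem:two_z} is needed: without it, the rank would drop precisely at points of $K$ lying on $\Gamma$.
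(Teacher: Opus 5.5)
Your proposal is correct and follows essentially the same route as the paper: compactness comes from the blow-up of $\phi_{z'}$ at $\partial\Delta^3$, and full rank of $d_xF$ on $T_xH$ comes from the Lagrange-type condition. That condition forces $x$ to be an affine combination of $z'$ and $z''$, hence in $K$, which contradicts Lemma~\ref{lem:two_z}. Your symmetric combination $\lambda\nabla\phi_{z'}+\mu\nabla\phi_{z''}$ is slightly more complete than the paper's normalized form $d\phi_{z'}-\lambda\, d\phi_{z''}$, which does not explicitly cover the case where $d_x\phi_{z''}$ alone vanishes on $T_xH$, and your compactness argument skips the paper's unnecessary claim that the level set is homeomorphic to $S^2$.
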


\begin{proof}
% For each $x\in\rint\Delta^3$, the tangent space of $\Delta^3$ at $x$ is
% $$
% T_x\Delta^3=\{v\in\mathbb{R}^4:\mathbf{1}^\top v=0\}.
% $$
We first show that $\Gamma$ is a compact subset of $\rint \Delta^{3}$ and then, 
show that $\Gamma$ is {  a smooth one-dimensional submanifold} of
$\rint\Delta^3$ without boundary. 

{\it{Proof that $\Gamma$ is compact.}}
We first show that for any $z\in K$ and any $c > 0$, $\phi_z^{-1}(c)$ is a compact subset of $\rint \Delta^3$. 
By item~1 of Lemma~\ref{lem:phi_z}, the function $\phi_{z}$ is strictly convex on $\rint\Delta^3$ and $\phi_{z}(x)$ goes to infinity as $x$ approaches $\partial\Delta^3$.  
It then follows 
that the sublevel set $\phi_{z}^{-1}([0,c])$ is convex and, moreover, contained in closed subset of $\rint \Delta^3$ and hence, is itself closed. 
It follows that $\phi_z^{-1}(c)$, being the boundary of $\phi_{z}^{-1}([0,c])$, is homeomorphic to $S^2$~\cite[Theorem 16.4]{bredon2013topology}.  
{ Thus, $\Gamma =\phi^{-1}_{z'}(\phi_{z'}(x(0))) \cap \phi^{-1}_{z''}(\phi_{z''}(x(0)))$} is compact. 
% closed and convex. 
% Since $\phi_{z}(x)$ goes to infinity as $x$ approaches $\partial\Delta^3$, the set $\phi_{z}^{-1}([0,c])$ is bounded away from $\partial\Delta^3$, so $\phi_{z}^{-1}([0,c])$ is contained in a compact subset of $\rint\Delta^3$. By the continuity of $\phi_{z}$, $\phi_{z}^{-1}([0,c])$ is closed in $\rint \Delta^3$, and thus compact. This also implies that the level set $\phi^{-1}_{z}(c)$ is a closed subset of $\phi_{z}^{-1}([0,c])$, and is compact. Consequently, $\Gamma$, being the intersection of two compact sets, and its connected component $\gamma$ are both compact.

{\it{Proof that $\Gamma$ is a smooth one-dimensional submanifold.}}
By the regular value theorem~\cite[Chapter 1]{guillemin2010differential}, it suffices to show that the Jacobian $d_xF:T_{x}\Delta^3\to\mathbb \R^2$ has full rank (rank $2$) for any $x \in \Gamma$. The proof is carried out by contradiction. 
 Suppose, to the contrary, that there exists an $x\in \Gamma$ such that $d_x\phi_{z'}$ and $d_x\phi_{z''}$
are linearly dependent in $T_{x}\Delta^3$, which is the subspace of $\R^4$ perpendicular to~$\mathbf{1}$. Then, there exist scalars $\lambda, \mu \in \R$ such that
$$
\frac{\partial \phi_{z'}(x)}{\partial x} - \lambda\frac{\partial \phi_{z''}(x)}{\partial x} = \mu \mathbf{1}.
$$
By computation, the above equation can hold if and only if
% $$
% -\frac{z_i'}{x_i} + \lambda\frac{z_i''}{x_i} = \mu, \mbox{ for any }i\in\{1,2,3,4\},$$ 
% which implies that
$$
\lambda z_i'' - z_i' = \mu x_i, \mbox{ for any }i\in\{1,2,3,4\}.
$$
If $\mu=0$, then $\lambda z''=z'$, which implies $z'= z''$, contradicting the fact that $z' \neq z''$. 
We thus assume that $\mu\neq 0$. But then, 
$x = \frac{\lambda}{\mu} z'' - \frac{1}{\mu} z'$, which is a linear combination of $z'$ and $z''$. It follows that $x$ belongs to the null space of $A$. Further, since $x\in \Gamma$ and $\Gamma \subseteq \rint \Delta^3$, we have that $x\in \nll(A) \cap \rint \Delta^3 = K$. 
However, this contradicts the fact that $x\in \Gamma$ and the fact that $\Gamma\cap K=\varnothing$, the latter of which follows from Lemma~\ref{lem:two_z}. 
Therefore, $d_xF$ has rank $2$ for any $x\in\Gamma$, and $F_{z',z''}(x)$ is a regular value of $F_{z',z''}$. Since
$\rint\Delta^3$ is a smooth three-dimensional manifold without boundary,
the regular value theorem
 implies that $\Gamma$ 
is a smooth one-dimensional submanifold of $\rint\Delta^3$
without boundary.

We conclude that every connected, compact one-dimensional manifold without
boundary is diffeomorphic to $S^1$. Since $\Gamma$ is compact, it has finitely
many such connected components. 
\end{proof}
With the lemmas above, we are now ready to prove Theorem~\ref{thm:periodic_orbit}:

% {\color{red} change the format}
\begin{proof}[Proof of Theorem~\ref{thm:periodic_orbit}.]
We first show that for any $x(0)\in \rint \Delta^3-K$, the trajectory through $x(0)$ is a periodic orbit.
%\xc{Proof that each trajectory is a periodic orbit.} 
We recall that $z'$ and $z''$ are two distinct points of $K$ such that 
$\Gamma = \Gamma_{z',z''}(x(0))$ does not intersect $K$. 
By Lemma~\ref{lem:phi_z}, both $\phi_{z'}$ and $\phi_{z''}$ are invariant 
along dynamics~\eqref{eq:zsreplicator}. 
Since the trajectory is continuous, $x(t)$ remains in the connected component
$\gamma$ for all $t\geq 0$. By Lemma~\ref{lem:regular_value}, $\gamma$ is diffeomorphic to $S^1$. The above arguments imply that $\gamma$ is a periodic orbit.  

% Moreover, the vector field of~\eqref{eq:zsreplicator} is tangent to $\gamma$ and is nowhere vanishing on $\gamma$ since $K$ comprises all the equilibria
% of~\eqref{eq:zsreplicator} in $\rint\Delta^3$ and $\gamma\cap K=\varnothing$. 
% Therefore, the trajectory through $x(0)$ traverses the entire component $\gamma$, and is a periodic orbit.

% By Lemma~\ref{lem:regular_value}, $\Gamma$ is a compact 1-manifold without boundary. By classification of 1-manifolds, $\Gamma$ consists of a finite number of disjoint connected components, each diffeomorphic to $S^1$. Since the solution $x(t)$ is continuous with respect to time, it cannot jump between disjoint components. Therefore, $x(t)$ remains entirely within the connected component containing $x(0)$. Since $\Gamma \cap K = \varnothing$, the vector field is non-vanishing on this component, implying the induced flow is periodic.
% \end{proof}

% \begin{Theorem}
%     Let $x(0)\in \rint \Delta^3- K$, and $\gamma$ be the connected component of $\Gamma$ containing $x(0)$. Then $\gamma$ is orbitally stable.
% \end{Theorem}
% \xc{Proof that $\gamma$ is stable.}
We now show that $\gamma$ is stable. 
Specifically, given any $\varepsilon > 0$, we show that there exists an open neighborhood $U$ of $\gamma$ in $\rint \Delta^3$ such that if $x(0)\in U$, then $\dist(x(t),\gamma) < \varepsilon$.  
For any $\delta > 0$, we let 
$$
N_\delta:=\{x\in\rint\Delta^3\mid\dist(x,\gamma)\leq \delta\}.
$$
We let $\delta$ be sufficiently small so that ({\it i}) $\delta\leq \epsilon$, ({\it ii})
$N_\delta$ is a closed tubular neighborhood of $\gamma$ in $\rint\Delta^3$, and ({\it iii}) $N_\delta$ does not intersect any other connected component of $\Gamma$ (this is feasible since $\Gamma$ has only finitely many connected components, all of which are diffeomorphic to $S^1$). 

For convenience, let $c':=\phi_{z'}(x(0))$ and $c'':=\phi_{z''}(x(0))$. Consider the Lyapunov function $V: \rint \Delta^3\to \R_{\geq 0}$ as 
$$V(x):=(\phi_{z'}(x)-c')^2+(\phi_{z''}(x)-c'')^2.$$ 
It is clear that $V(x)=0$ if and only if $x\in \Gamma$ and that $V(x)\to \infty$ as $x\to \partial \Delta^3$. 

Since $\partial N_\delta := \{x\in N_\delta \mid \dist(x,\gamma) = \delta\}$ is compact and since it does not intersect $\Gamma$, we have that 
\begin{equation}
\label{eq:def_rho}
    \rho:= \min_{x\in\partial N_{\delta}}V(x) > 0.
\end{equation} 
We now let
$$
U:= V^{-1}(-\infty,\rho) \cap \rint N_\delta, 
$$
which is an open set. It now remains to show that $U$ is forward invariant. First, note that the function $V$ is invariant; indeed, 
\begin{equation*}
\dot V(x(t)) = 2(\phi_{z'}(x(t)) - c')\dot \phi_{z'}(x(t)) + 2(\phi_{z''}(x(t)) - c'')\dot \phi_{z''}(x(t)) = 0,
\end{equation*}
where the last equality follows from item~2 of Lemma~\ref{lem:phi_z}. We next show that if $x(0)\in U$, then $x(t)\in \rint N_\delta$ for all $t$. Suppose not, say $x(t^*) \in \partial N_\delta$ for some $t^* > 0$; then, by~\eqref{eq:def_rho}, $V(x(t^*)) \geq\rho > V(x(0))$, which contradicts the fact that $V(x(t^*))  = V(x(0))$. 
\end{proof}

\section{Dynamics on the Boundary and Stability Property}
\label{sec:sys_boundary}

In this section, we provide a complete characterization of the boundary dynamics of replicator dynamics induced by $4$-by-$4$ payoff matrices when coexistence occurs. 
Since each face of $\Delta^3$ is a two-dimensional simplex and each edge is forward invariant under~\eqref{eq:zsreplicator}, the boundary dynamics can be analyzed by combining the sign patterns of the skew-symmetric payoff matrix with the known phase-portrait classification for replicator dynamics with two or three strategies. By Theorem~\ref{thm:main2}, coexistence occurs if and only if the associated digraph $\mathcal{G}_A$ belongs to one of the five isomorphism classes shown in Fig.~\ref{fig:combined_cases}. We examine these five cases one by one and characterize, for each class, the equilibria on the boundary together with the asymptotic behavior on the forward invariant edges and faces.

{  
To proceed, we first note that for each~$i \in \{1,2,3,4\}$, the face $F_{-i}$ is forward invariant. Thus, the dynamics~\eqref{eq:zsreplicator}, when restricted to $F_{-i}$, is reduced to the three-strategy replicator dynamics induced by the principal submatrix $A_{-i}$ of $A$, where $A_{-i}$ is obtained from $A$ by removing the $i$-th row and the $i$-th column. 
Correspondingly, $\mathcal{G}_{A_{-i}}$ is the subgraph of $\mathcal{G}_A$ induced by $\{1,2,3,4\} - \{i\}$.

\begin{figure*}[t]
\centering
\begin{tabular}{cccccc}

%================================================
% (f) Directed 3-cycle
%================================================
\begin{subfigure}[t]{0.15\textwidth}
\centering
\begin{tikzpicture}[
  baseline=(current bounding box.center),
  scale=1.7,
  v/.style={circle,fill=black,inner sep=1.3pt},
  e/.style={line width=0.8pt,->,>=Stealth}
]
  \node[v,label=below:$j$] (n1) at (0,0) {};
  \node[v,label=below:$k$] (n2) at (1,0) {};
  \node[v,label=above:$\ell$] (n3) at (0.5,{sqrt(3)/2}) {};
  \draw[e] (n1) -- (n2);
  \draw[e] (n2) -- (n3);
  \draw[e] (n3) -- (n1);
\end{tikzpicture}
\caption*{(a)}
\phantomsubcaption
\makeatletter
\def\@currentlabel{a}
\makeatother
\label{subfig:boundary-i}
\end{subfigure}
&
%================================================
% (e) Transitive three-edge graph
%================================================
\begin{subfigure}[t]{0.15\textwidth}
\centering
\begin{tikzpicture}[
  baseline=(current bounding box.center),
  scale=1.7,
  v/.style={circle,fill=black,inner sep=1.3pt},
  e/.style={line width=0.8pt,->,>=Stealth}
]
  \node[v,label=below:$j$] (n1) at (0,0) {};
  \node[v,label=below:$k$] (n2) at (1,0) {};
  \node[v,label=above:$\ell$] (n3) at (0.5,{sqrt(3)/2}) {};
  \draw[e] (n2) -- (n1);
  \draw[e] (n3) -- (n1);
  \draw[e] (n2) -- (n3);
\end{tikzpicture}
\caption*{(b)}
\phantomsubcaption
\makeatletter
\def\@currentlabel{b}
\makeatother
\label{subfig:boundary-ii}
\end{subfigure}
&
%================================================
% (d) Two edges with a common sink
%================================================
\begin{subfigure}[t]{0.15\textwidth}
\centering
\begin{tikzpicture}[
  baseline=(current bounding box.center),
  scale=1.7,
  v/.style={circle,fill=black,inner sep=1.3pt},
  e/.style={line width=0.8pt,->,>=Stealth}
]
  \node[v,label=below:$j$] (n1) at (0,0) {};
  \node[v,label=below:$k$] (n2) at (1,0) {};
  \node[v,label=above:$\ell$] (n3) at (0.5,{sqrt(3)/2}) {};
  \draw[e] (n2) -- (n1);
  \draw[e] (n3) -- (n1);
\end{tikzpicture}
\makeatletter
\def\@currentlabel{c}
\makeatother
\caption*{(c)}
\label{subfig:boundary-iii}
\end{subfigure}
&
%================================================
% (c) Two edges forming a directed path
%================================================
\begin{subfigure}[t]{0.15\textwidth}
\centering
\begin{tikzpicture}[
  baseline=(current bounding box.center),
  scale=1.7,
  v/.style={circle,fill=black,inner sep=1.3pt},
  e/.style={line width=0.8pt,->,>=Stealth}
]
  \node[v,label=below:$j$] (n1) at (0,0) {};
  \node[v,label=below:$k$] (n2) at (1,0) {};
  \node[v,label=above:$\ell$] (n3) at (0.5,{sqrt(3)/2}) {};
  \draw[e] (n2) -- (n1);
  \draw[e] (n1) -- (n3);
\end{tikzpicture}
\caption*{(d)}
\makeatletter
\def\@currentlabel{d}
\makeatother
\label{subfig:boundary-iv}
\end{subfigure}
&
%================================================
% (b) Two edges with a common source
%================================================
\begin{subfigure}[t]{0.15\textwidth}
\centering
\begin{tikzpicture}[
  baseline=(current bounding box.center),
  scale=1.7,
  v/.style={circle,fill=black,inner sep=1.3pt},
  e/.style={line width=0.8pt,->,>=Stealth}
]
  \node[v,label=below:$j$] (n1) at (0,0) {};
  \node[v,label=below:$k$] (n2) at (1,0) {};
  \node[v,label=above:$\ell$] (n3) at (0.5,{sqrt(3)/2}) {};
  \draw[e] (n1) -- (n2);
  \draw[e] (n1) -- (n3);
\end{tikzpicture}
\caption*{(e)}
\makeatletter
\def\@currentlabel{e}
\makeatother
% \label{subfig:boundary-v}
\label{subfig:boundary-v}
\end{subfigure}
&
%================================================
% (a) One directed edge
%================================================
\begin{subfigure}[t]{0.15\textwidth}
\centering
\begin{tikzpicture}[
  baseline=(current bounding box.center),
  scale=1.7,
  v/.style={circle,fill=black,inner sep=1.3pt},
  e/.style={line width=0.8pt,->,>=Stealth}
]
  \node[v,label=below:$j$] (n1) at (0,0) {};
  \node[v,label=below:$k$] (n2) at (1,0) {};
  \node[v,label=above:$\ell$] (n3) at (0.5,{sqrt(3)/2}) {};
  \draw[e] (n1) -- (n2);
\end{tikzpicture}
\caption*{(f)}
\makeatletter
\def\@currentlabel{f}
\makeatother
% \label{subfig:boundary-ii}
\label{subfig:boundary-vi}
\end{subfigure}
\end{tabular}

\caption{The six isomorphism classes of the induced subgraphs $\mathcal{G}_{A_{-i}}$.}
\label{fig:boundary_graphs}
\end{figure*}

By Theorem~\ref{thm:fineclass}, we have six classes of possible digraphs for $\mathcal{G}_{A_{-i}}$ as shown in Fig.~\ref{fig:boundary_graphs}.  
The classification of all three-strategy replicator dynamics with constant $3$-by-$3$ payoff matrices has been completed by Bomze~\cite{Bomze1983,Bomze1995}. Relying upon his results, we have the following lemma: 
%The associated interaction graph $\mathcal{G}$ is the subgraph of $\mathcal G_A$ induced by the three strategies present on $F_{-i}$, with its directed edges representing the nonzero pairwise interactions among these strategies.

% {  1. draw all the five figures of $\mathcal{G}_{A_{-i}}$

% 2. give a unified lemma for all the behaviors on the boundary 

% 3. in the second section, recall each item and prove stability

% }

\begin{Lemma}
\label{lem:face_lem}
    %Let $\mathcal{G}_{A_{-i}}$ be the associated subgraph of $\mathcal{G}_A$ for face $i$. The dynamic behavior of~\eqref{eq:zsreplicator} restricted to face $F_{-i}$ belongs to one of the following cases. After relabeling, we have
    The following items hold:
    \begin{enumerate}
        \item If $\mathcal{G}_{A_{-i}}$ is the digraph~(\ref{subfig:boundary-i}) in Fig.~\ref{fig:boundary_graphs}, then any $x(0)\in \rint F_{-i}-L$ belongs to a stable periodic orbit.
        
        \item If $\mathcal{G}_{A_{-i}}$ is the digraph~(\ref{subfig:boundary-ii}) in Fig.~\ref{fig:boundary_graphs}, then for any $x(0)\in \rint F_{-i}$, $\lim_{t\to \infty} {x(t)} = e_{j}$.
        
        \item If $\mathcal{G}_{A_{-i}}$ is the digraph~(\ref{subfig:boundary-iii}) in Fig.~\ref{fig:boundary_graphs}, then for any $x(0)\in \rint F_{-i}$, $\lim_{t\to \infty} {x(t)} = e_{j}$.
        
        \item If $\mathcal{G}_{A_{-i}}$ is the digraph~(\ref{subfig:boundary-iv}) in Fig.~\ref{fig:boundary_graphs}, then for any $x(0)\in \rint F_{-i}$,
        there exists a unique $z\in \rint E_{k\ell}$ satisfying $a_{jk} z_k+a_{j\ell} z_\ell<0$, and 
        $$a_{j\ell}\log z_k-a_{jk}\log z_\ell = a_{j\ell}\log x_k(0)-a_{jk}\log x_\ell(0), $$ %with $z_1\in (a_{12}/(a_{12}-a_{13}),1)$, 
        such that $\lim_{t\to\infty} x(t)=z$.
        
        \item If $\mathcal{G}_{A_{-i}}$ is the digraph~(\ref{subfig:boundary-v}) in Fig.~\ref{fig:boundary_graphs}, then for any $x(0) \in \rint F_{-i}$, 
        there exists a unique $z\in \rint E_{k\ell}$, satisfying 
        $$a_{j\ell}\log z_k-a_{jk}\log z_\ell = a_{j\ell}\log x_k(0)-a_{jk}\log x_\ell(0),$$ such that $\lim_{t\to\infty} x(t)=z$.

        \item If $\mathcal{G}_{A_{-i}}$ is the digraph~(\ref{subfig:boundary-vi}) in Fig.~\ref{fig:boundary_graphs}, then for any $x(0) \in \rint F_{-i}$, there exists a unique $z\in \rint E_{k\ell}$, with $z_\ell=x_\ell(0)$, such that $
        \lim_{t\to\infty} x(t)=z$.
    \end{enumerate}
\end{Lemma}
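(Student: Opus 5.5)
Throughout, we work on the forward invariant face $F_{-i}$ and write $B:=A_{-i}$ for the $3\times 3$ skew-symmetric principal submatrix, indexed by $\{j,k,\ell\}$. Recall that an edge $(p,q)$ of $\mathcal{G}_{A_{-i}}$ means $a_{pq}<0$. The basic tool in every case is the identity
$$\frac{d}{dt}\ln x_p = (Bx)_p \quad \text{on } \rint F_{-i},$$
which holds because $x^\top Bx=0$. The six items split into three groups, treated in turn below: the cycle (a), the dominated cases (b)--(c), and the cases (d)--(f) in which $a_{k\ell}=0$.

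\emph{Case (a).} By Lemma~\ref{lem:3cycle} (applied inside the face), $L$ meets $\rint F_{-i}$ in a unique point $z$, and $Bz_{\{j,k,\ell\}}=0$. The face analogue of $\phi_z$, namely $-\sum_{p\in\{j,k,\ell\}} z_p\ln(x_p/z_p)$, is conserved, strictly convex, and blows up at $\partial F_{-i}$. Hence each positive level set is a compact convex curve homeomorphic to $S^1$. By uniqueness, $z$ is the only equilibrium in $\rint F_{-i}$, so the vector field does not vanish on any such level curve. Each level curve is therefore a single periodic orbit. Stability then follows verbatim from the $V$/tubular-neighborhood argument in the proof of Theorem~\ref{thm:periodic_orbit}, using the single invariant $(\phi_z-c)^2$.

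\emph{Cases (b),(c).} In both digraphs $a_{kj}<0$ and $a_{\ell j}<0$, so $a_{jk},a_{j\ell}>0$. Put $m:=\min(a_{jk},a_{j\ell})>0$. Then
$$\dot x_j = x_j(a_{jk}x_k+a_{j\ell}x_\ell)\ \ge\ m\,x_j(1-x_j).$$
Comparison with the logistic equation gives $x_j(t)\to 1$ for every $x(0)\in\rint F_{-i}$. Equivalently, $-\ln x_j$ is a strict Lyapunov function and we can apply LaSalle. Either way, $x(t)\to e_j$.

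\emph{Cases (d),(e),(f).} Here $a_{k\ell}=0$, so
$$\tfrac{d}{dt}\ln x_k=a_{kj}x_j \quad\text{and}\quad \tfrac{d}{dt}\ln x_\ell=a_{\ell j}x_j.$$
Consequently $I(x):=a_{j\ell}\ln x_k-a_{jk}\ln x_\ell$ is a first integral; this is exactly the relation stated in items 4--5. Each case then proceeds as follows.
\begin{itemize}
\item[(e)] We have $a_{jk},a_{j\ell}<0$. Since $x_k$ and $x_\ell$ are nondecreasing, $x_k+x_\ell\ge 1-x_j(0)$, and so
$$\dot x_j\le -c\,x_j \quad\text{with}\quad c:=\min(|a_{jk}|,|a_{j\ell}|)\,(1-x_j(0))>0.$$
Thus $x_j\to 0$ exponentially and $\int_0^\infty x_j\,dt<\infty$. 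It follows that $\ln x_k$ and $\ln x_\ell$ converge, so $x(t)\to z\in\rint E_{k\ell}$. Continuity of $I$ gives $I(z)=I(x(0))$. Uniqueness holds because $r\mapsto a_{j\ell}\ln r-a_{jk}\ln(1-r)$ is strictly monotone when $a_{jk}$ and $a_{j\ell}$ have the same sign.
\item[(f)] We have $a_{j\ell}=0$, so $x_\ell$ is constant. The same exponential decay of $x_j$ gives a limit $z\in\rint E_{k\ell}$ with $z_\ell=x_\ell(0)$.
\end{itemize}

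\emph{Case (d): the main obstacle.} Now $a_{jk}>0>a_{j\ell}$, so $\dot x_j$ has no fixed sign. Still, $x_k$ is strictly decreasing and $x_\ell$ strictly increasing, so $x_k/x_\ell$ is strictly monotone along the trajectory. I would argue via the $\omega$-limit set $\Omega$, in four steps.
\begin{enumerate}
\item $\Omega$ is nonempty, compact and invariant, and $x_k/x_\ell$ is constant on it (being a monotone bounded function of time, extended continuously). Any point of $\Omega$ in $\rint F_{-i}$ would make this ratio strictly change along its own orbit, so $\Omega\subseteq\partial F_{-i}$.
\item Monotonicity of $x_\ell$ gives $x_\ell\ge x_\ell(0)>0$, which excludes the edges $E_{jk}$ and vertex $e_k$. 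It also bounds $x_k$ away from $0$, since $I$ is constant and $a_{j\ell}<0$; this excludes $E_{j\ell}$ and $e_\ell$.
\item Hence $\Omega$ lies in the closure of the level set $\{I=I(x(0))\}$ intersected with $\rint E_{k\ell}$. This intersection has at most two points, because on $E_{k\ell}$ the function $I$ is strictly convex in $z_k$.
\item $\Omega$ is connected, so it is a single point $z$. To show $a_{jk}z_k+a_{j\ell}z_\ell<0$, I would note that at the other root $\dot x_j/x_j>0$ nearby, so it is transversally repelling and cannot attract an interior trajectory. Alternatively, the monotone direction of $x_k$ selects the root with smaller $z_k$, where the sign condition holds.
\end{enumerate}
Verifying this last sign bookkeeping carefully is the delicate point, and I expect it to be where most of the effort goes.
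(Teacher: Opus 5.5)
Everything outside the last step of item 4 is sound. Your routes differ from the paper's only cosmetically:
\begin{itemize}
\item in item 1 you use that the level curves of the face entropy are equilibrium-free topological circles, where the paper cites Poincar\'e--Bendixson;
\item in items 2--3 you use a logistic comparison, where the paper uses LaSalle with $V=-x_j$;
\item in item 4 you use an $\omega$-limit argument, where the paper simply uses that $x_k,x_\ell$ are monotone and bounded.
\end{itemize}
One small slip: in item 6 the constant $c$ from item 5 is zero because $a_{j\ell}=0$. Use $\dot x_j=a_{jk}x_kx_j\le a_{jk}x_k(0)\,x_j$ instead.

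The genuine gap is the strict inequality $a_{jk}z_k+a_{j\ell}z_\ell<0$ in item 4, on which the uniqueness claim also rests. Put $q(x):=a_{jk}x_k+a_{j\ell}x_\ell$, and let $g(s)$ be $I$ restricted to the edge with $z_k=s$, so that $g'(s)=q/(s(1-s))$. Since $g$ is strictly convex and tends to $+\infty$ at both ends, the (nonempty) set $\{g=I(x(0))\}$ is one of two things:
\begin{itemize}
\item two points $s_1<s_2$, with $q<0$ at $s_1$ and $q>0$ at $s_2$; or
\item a single tangency point where $q=0$.
\end{itemize}
Your repelling argument (if $q(z)>0$, then $x_j$ is eventually increasing, contradicting $x_j\to0$) excludes $s_2$, but it does not exclude the tangency case. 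Your alternative (``the monotone direction of $x_k$ selects the smaller root'') presupposes $x_k(0)<s_2$, which you have not shown.

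Either of two short observations closes this.
\begin{itemize}
\item The paper's route: along trajectories $\dot q=-x_j\bigl(a_{jk}^2x_k+a_{j\ell}^2x_\ell\bigr)<0$. So if $q_\infty:=\lim_{t\to\infty}q(x(t))\ge0$, then $q(x(t))>q_\infty\ge 0$ for all $t$. Hence $x_j$ is increasing, contradicting $x_j(t)\to0$. This disposes of $q_\infty>0$ and $q_\infty=0$ simultaneously.
\item A direct route: $I(x(0))-g(x_k(0))=a_{jk}\ln\bigl((1-x_k(0))/x_\ell(0)\bigr)>0$. So $x_k(0)$ lies strictly between two distinct roots $s_1<s_2$. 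Since $z_k<x_k(0)$, this forces $z_k=s_1$, where $g'<0$, i.e.\ $q(z)<0$.
\end{itemize}
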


Up to relabeling of the strategies and reversal of the sign for the payoff, the six cases above correspond, respectively, to cases~16, 43, 29, 33, 29, and~18 in~\cite[Fig.~6]{Bomze1983}. However, the author of the paper did not provide detailed proof for each case. 
For completeness of the presentation, we provide a proof of the lemma in the Appendix.

Also, note that each edge $E_{ij}$ of the simplex is forward-invariant, the dynamics~\eqref{eq:zsreplicator}, when restricted to $E_{ij}$ will be reduced to a two-strategy replicator dynamics. 
If $a_{ij} = 0$, then each point of $E_{ij}$ is an equilibrium point.
If $a_{ij} \neq 0$, then for any $x(0)\in \rint E_{ij}$, we have that 
\begin{equation}
\label{eq:limit_edge}
\lim_{t\to\infty} x(t)=
    \begin{cases}
    e_i, & a_{ij}>0,\\
    e_j, & a_{ij}<0.
    \end{cases}
\end{equation}
With the preliminaries above, we will now describe the boundary dynamics and the associated stability properties for the five cases in Fig.~\ref{fig:combined_cases} in five subsequent propositions. 

More specifically, for each case, we will describe in the statement of the corresponding proposition {\it (1)} 
the $\omega$-limiting set of the dynamics~\eqref{eq:zsreplicator} in $\partial \Delta^3$ (which comprises equilibria and possible periodic orbits), {\it (2)} the stability properties of these equilibria and periodic orbits, and {\it (3)} the dynamic behavior of any trajectory whose initial condition $x(0)$ belongs to the interior of a certain face. The exhibition of the equilibria follows straightforwardly from the computation $\diag(x) A x = 0$. The dynamic behavior follows
from Lemma~\ref{lem:face_lem}. We will thus only provide proofs about stability properties of the equilibria and periodic orbits (if they exist). Further, note that the stability properties of the two equilibria in the set $L\cap \partial \Delta^3$ have already been addressed in Lemma~\ref{lem:L_stability}---each $x\in L\cap \partial \Delta^3$ is stable, but not asymptotically stable.   
}
% {\color{red}
% Note that the set of equilibria can be verified via checking the dynamics in~\eqref{eq:zsreplicator} as $\dot x_i=0$ for all $i=1,2,3,4$, for which we omit the proof here (and also for the following propositions).
% }

% \begin{proof}
%     Consider the function as defined in~\eqref{eq:def_phi} with $z\in L\cap \Delta^3$. We adopt the convention that $\ln x_i/z_i=0$ whenever $z_i=0$. Then for any $x\in L\cap \Delta^3$, by the log-sum inequality, we have 
%     $$\phi_z(x)= -\sum_{i=1}^4 z_i\log\frac{x_i}{z_i}
%     \geq -\log(\sum_{i=1}^4 x_i) \geq0,$$
%     with equality holds if and only if $x=z$. Moreover, Lemma~\ref{lem:phi_z} has shown that $\dot \phi_z(x)=0$. Apply the Lyapunov Theorem, we have that any $x\in L\cap \Delta^3$ is stable. 

%     It remains to show that such $x$ is not asymptotically stable. Since $L\cap\Delta^3$ is a nontrivial line segment consisting of the continuum of equilibria, every relative neighborhood of $z$ contains another point $z'\in L\cap\Delta^3$ with $z'\neq z$, such that $x(t)=z'$ for any $t\geq 0$. Hence every point $L\cap \Delta^3$ in is not asymptotically stable. This completes our proof.
% \end{proof}
% 
% {\it{~\ref{subfig:case1}:} $z^+\in \rint(F_{-i})$, $z^-\in \rint(F_{-j})$.} Fix the index $i<j, k<\ell$, we have 

% Without loss of generality, we set $i=1,j=2$. 

% \subsection{Phase Portraits of Case~(\ref{subfig:I})}
    %Given a skew-symmetric $A$ with $\det(A)=0$, and 

   {  
    \begin{proposition}
        Let $\mathcal{G}_A$ be the digraph in~(\ref{subfig:I}) of Fig.~\ref{fig:combined_cases}. 
        Then, the following items hold:
        \begin{enumerate}
            \item In this case, $L$ intersects $\partial \Delta^3$ inside $F_{-1}$ and $F_{-2}$. 
            The set of equilibria of dynamics~\eqref{eq:zsreplicator} in $\partial \Delta^3$ is given by 
            $$\{e_1,e_2,e_3,e_4\}\cup (L\cap \partial \Delta^3).$$ 
            \item Each vertex $e_i$ is unstable. Each $x\in L\cap \partial\Delta^3$ is stable but not asymptotically stable. 
            \item The following hold on the faces of $\Delta^3$:
        \begin{enumerate}
        \item If $i\in \{1,2\}$, then any $x \in \rint F_{-i}-L$ belongs to a stable periodic orbit.  
        \item
        For any $x(0) \in \rint F_{-3}$, $\lim_{t\to\infty} x(t)=e_4$. %see, e.g.,~\cite[case~2 in Fig.11]{Zeeman1980} and~\cite[case 43 in Figure 6]{Bomze1983}.

        \item 
        For any $x(0) \in \rint F_{-4}$, $\lim_{t\to\infty} x(t)=e_1$. %see, e.g.,~\cite[case~2 in Fig.11]{Zeeman1980} and~\cite[case 43 in Figure 6]{Bomze1983}.
        \end{enumerate}
        \end{enumerate} 
    \end{proposition}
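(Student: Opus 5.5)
We first read off the sign pattern of $A$ from graph~(\ref{subfig:I}). The arrows in the TikZ code are drawn with reversed heads, so the edge set is $\{(3,2),(4,3),(2,4),(1,4),(3,1),(2,1)\}$. Hence $a_{32},a_{43},a_{24},a_{14},a_{31},a_{21}<0$. The induced subgraphs are as follows. On $\{2,3,4\}$ we get the cycle $2\to4\to3\to2$. On $\{1,3,4\}$ we get the cycle $1\to4\to3\to1$. On $\{1,2,4\}$ the edges $2\to1$, $2\to4$, $1\to4$ give a transitive triangle with sink $4$. On $\{1,2,3\}$ the edges $2\to1$, $3\to2$, $3\to1$ give a transitive triangle with sink $1$.

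Item~1 and item~3 then follow from earlier results.
\begin{itemize}
\item By Lemma~\ref{lem:3cycle} and Theorem~\ref{thm:fineclass}, $L$ meets $\rint F_{-1}$ and $\rint F_{-2}$ transversally. These two points are the endpoints of $L\cap\Delta^3$.
\item For the equilibria on $\partial\Delta^3$, we solve $\diag(x)Ax=0$ face by face and edge by edge. Every $a_{ij}$ with $i\neq j$ is nonzero, so no edge carries interior equilibria; only the vertices remain there. On $\rint F_{-3}$ and $\rint F_{-4}$ an interior equilibrium would require the induced $3\times3$ kernel to be positive, which a transitive triangle forbids (cf.\ the proof of Lemma~\ref{lem:3cycle}). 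On $\rint F_{-1}$ and $\rint F_{-2}$ the unique equilibrium is the point of $L$.
\item Item~3(a) is item~1 of Lemma~\ref{lem:face_lem}.
\item Items~3(b),(c) are item~2 of Lemma~\ref{lem:face_lem}, with sink $j=4$ on $F_{-3}$ and $j=1$ on $F_{-4}$.
\end{itemize}

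For item~2, stability of the points of $L\cap\partial\Delta^3$ is Lemma~\ref{lem:L_stability}. Each vertex is unstable because it repels points on some incident edge, by \eqref{eq:limit_edge}:
\begin{itemize}
\item $e_1$ repels $E_{14}$, since $a_{14}<0$ sends trajectories to $e_4$;
\item $e_4$ repels $E_{34}$, since $a_{43}<0$ sends trajectories to $e_3$;
\item $e_3$ repels $E_{23}$, since $a_{32}<0$ sends trajectories to $e_2$;
\item $e_2$ repels $E_{12}$, since $a_{21}<0$ sends trajectories to $e_1$.
\end{itemize}
In each case, points arbitrarily close to the vertex leave a fixed neighborhood of it.

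For the stability of the periodic orbits in $\rint F_{-1}$ and $\rint F_{-2}$, item~1 of Lemma~\ref{lem:face_lem} only gives stability within the face, which is not enough. We would repeat the proof of Theorem~\ref{thm:periodic_orbit} in $\Delta^3$. Take the face equilibrium $z\in L\cap\rint F_{-1}$ and a second point $z'\in K$. We then need two invariants whose joint level set near the face orbit is a circle. A natural choice is $\phi_z$, extended with the convention $0\ln 0=0$ so that it stays finite on $F_{-1}$, together with the ratio-type first integral $x_1^{\alpha}\prod_{i\ge 2} x_i^{\beta_i}$ built from $z'-z$. The obstacle is that $\phi_{z'}$ blows up at $F_{-1}$, so the intersection-of-level-sets argument degenerates there. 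We would handle this with the Lyapunov-type function $V=(\phi_z-c)^2+x_1^2$, using $\dot x_1=x_1(Ax)_1$. Note that $(Ax)_1$ changes sign along the orbit, since its time average is $(Az)_1=0$. We expect this transverse stability to be the main difficulty. If it fails, we would read ``stable'' in item~3(a) as stability within the face, which is exactly Lemma~\ref{lem:face_lem}.
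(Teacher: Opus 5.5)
Your proof is correct and is essentially the paper's. The paper obtains the boundary equilibria from $\diag(x)Ax=0$, the stability of $L\cap\partial\Delta^3$ from Lemma~\ref{lem:L_stability}, and items~3(a)--(c) from items~1 and~2 of Lemma~\ref{lem:face_lem}. It proves vertex instability via \eqref{eq:limit_edge} along exactly your edges $E_{14},E_{12},E_{23},E_{34}$. The paper also takes ``stable'' in item~3(a) directly from Lemma~\ref{lem:face_lem}, which is your fallback reading. So your transverse-stability discussion is not needed to match what the paper proves.

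That discussion does contain a step that would fail as written. $V=(\phi_z-c)^2+x_1^2$ is not a Lyapunov function, because $\dot V=2x_1^2(Ax)_1$ has no sign, as you observe yourself. Replace $x_1^2$ by the ratio integral you already wrote down. With $w=z'-z\in\nll(A)$, the function $R(x)=\prod_i x_i^{w_i}$ has the following properties:
\begin{itemize}
\item it satisfies $\tfrac{d}{dt}\ln R=w^\top Ax=0$;
\item it extends continuously by $0$ to $F_{-1}$ on $\{x_2,x_3,x_4>0\}$, because $w_1=z_1'>0$;
\item on that set it vanishes exactly on $F_{-1}$.
\end{itemize}
Then $(\phi_z-c)^2+R^2$ is invariant, and its zero set on $\{x_2,x_3,x_4>0\}$ is exactly $\gamma$. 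The tubular-neighborhood argument from the proof of Theorem~\ref{thm:periodic_orbit} then goes through unchanged, so the face orbits are in fact stable in all of $\Delta^3$.
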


    \begin{proof} 
    We show that any $x\in \{e_1,e_2,e_3,e_4\}$ is unstable. For each $i = 1,2,3,4$, define 
    $$\sigma(i):=
    \begin{cases}
     4 & \mbox{if } i = 1; \\
     1 & \mbox{if } i = 2; \\
     2 & \mbox{if } i = 3; \\
     3 & \mbox{if } i = 4. 
    \end{cases}
    $$ 
    It then follows from the sign pattern of $A$ (represented by Graph~(\ref{subfig:I}) in Fig.~\ref{fig:combined_cases}) and~\eqref{eq:limit_edge} that if $x(0)\in \rint E_{i\sigma(i)}$, then $\lim_{t\to\infty} x(t) = e_{\sigma(i)} \neq e_i$. 
    \end{proof}
\begin{proposition}
Let $\mathcal{G}_A$ be the digraph in~(\ref{subfig:II}) of Fig.~\ref{fig:combined_cases}. Then, the following items hold:

\begin{enumerate}
    \item 
    In this case, $L$ intersects $\partial \Delta^3$ inside $F_{-1}$ and $F_{-2}$. 
    The set of equilibria of dynamics~\eqref{eq:zsreplicator} in $\partial\Delta^3$ is given by 
    $$E_{12}\cup \{e_3,e_4\}\cup (L\cap \partial \Delta^3).$$ \item Each $x\in \{e_3,e_4\}\cup E_{12}$ is unstable. Each $x\in L\cap \partial\Delta^3$ is stable but not asymptotically stable.
    \item The following hold on the faces of $\Delta^3$:
    \begin{enumerate}
        \item If $i\in \{1,2\}$, then any $x \in \rint F_{-i}-L$ belongs to a stable periodic orbit.
        
        \item
        For any $x(0) \in \rint F_{-3}$, $
        \lim_{t\to\infty} x(t)=e_4
        $.
        
        \item
        For any $x(0) \in \rint F_{-4}$, 
        there exists a unique $z\in \rint E_{12}$, such that $\lim_{t\to\infty} x(t)=z$.
    \end{enumerate}
\end{enumerate}
\end{proposition}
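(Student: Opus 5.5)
Following the convention stated before Proposition for Case~(\ref{subfig:I}), the equilibria listed in item~1 follow from solving $\diag(x)Ax=0$ on $\partial\Delta^3$, and the face behaviour in item~3 follows from Lemma~\ref{lem:face_lem}. The only new work is therefore the instability claims in item~2, since stability of $L\cap\partial\Delta^3$ is Lemma~\ref{lem:L_stability}. I would nonetheless first fix the sign pattern of Graph~(\ref{subfig:II}) and record which face falls in which class, because both the equilibrium set and the instability arguments rely on it.

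Reading Graph~(\ref{subfig:II}), the edges are
\[
3\to 2,\quad 4\to 3,\quad 2\to 4,\quad 1\to 4,\quad 3\to 1 .
\]
Hence $a_{32},a_{43},a_{24},a_{14},a_{31}<0$ and $a_{12}=0$, so every point of $E_{12}$ is an equilibrium. The induced subgraphs on the four faces are as follows.
\begin{itemize}
\item On $F_{-1}$ (nodes $2,3,4$) and on $F_{-2}$ (nodes $1,3,4$) the subgraph is a directed $3$-cycle. By Lemma~\ref{lem:3cycle} these are the faces met by $L$, and item~1 of Lemma~\ref{lem:face_lem} gives item~3(a).
\item On $F_{-3}$ (nodes $1,2,4$) the edges are $1\to4$ and $2\to4$, which is digraph~(\ref{subfig:boundary-iii}) with $j=4$. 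So trajectories in $\rint F_{-3}$ tend to $e_4$.
\item On $F_{-4}$ (nodes $1,2,3$) the edges are $3\to1$ and $3\to2$, which is digraph~(\ref{subfig:boundary-v}) with $j=3$. So trajectories in $\rint F_{-4}$ tend to a point of $\rint E_{12}$.
\end{itemize}
This gives item~3(b)--(c). For item~1, on each edge with $a_{ij}\neq0$ only the vertices are equilibria, and inside each face the only equilibria are those of $L$ or those on $E_{12}$.

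For the instability of the vertices I would argue as in Case~(\ref{subfig:I}), using~\eqref{eq:limit_edge} along an edge leaving the vertex.
\begin{itemize}
\item For $e_4$: since $a_{43}<0$, any $x(0)\in\rint E_{34}$, however close to $e_4$, satisfies $x(t)\to e_3$.
\item For $e_3$: since $a_{32}<0$, any $x(0)\in\rint E_{23}$, however close to $e_3$, satisfies $x(t)\to e_2$.
\end{itemize}
In each case the trajectory leaves a fixed neighbourhood of the vertex, so the vertex is unstable.

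The main obstacle is the instability of every point of $E_{12}$, including $e_1$ and $e_2$, because $E_{12}$ is a continuum of equilibria and points near it on the edge itself do not move. I would use the face $F_{-3}$, which contains $E_{12}$. Given $z\in E_{12}$ and $\varepsilon\in(0,1/2)$, pick $x(0)\in\rint F_{-3}$ within $\varepsilon$ of $z$. Lemma~\ref{lem:face_lem}, item~3, gives $x(t)\to e_4$. Every point of $E_{12}$ has $x_4=0$ and is at distance at least $\dist(e_4,E_{12})$ from $e_4$, a positive constant. Hence the trajectory eventually leaves the ball of radius $\tfrac12\dist(e_4,E_{12})$ around $z$, which establishes instability.
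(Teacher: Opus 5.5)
Your proposal is correct and follows essentially the same route as the paper. You read the face classes and equilibria off the sign pattern and Lemma~\ref{lem:face_lem}, and you show $e_3,e_4$ unstable via the edges $E_{23}$ and $E_{34}$ using~\eqref{eq:limit_edge}. You then show points of $E_{12}$ unstable because trajectories from $\rint F_{-3}$ converge to $e_4$. The only minor difference is that the paper handles $e_1,e_2$ separately through the edges $E_{14},E_{24}$ (both flowing to $e_4$) and uses the face argument only for $\rint E_{12}$. You instead cover all of $E_{12}$ with the face argument and make the distance estimate explicit, which is equally valid.
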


\begin{proof}
    We show that any $x\in \{e_3,e_4\}\cup E_{12}$ is unstable. We first prove instability of each vertex~$e_i$. For each $i=1,2,3,4$, define 
    $$\sigma(i):=
    \begin{cases}
     4 & \mbox{if } i = 1; \\
     4 & \mbox{if } i = 2; \\
     2 & \mbox{if } i = 3; \\
     3 & \mbox{if } i = 4. 
    \end{cases}
    $$ 
    It then follows from the sign pattern of $A$ (represented by Graph~(\ref{subfig:II}) in Fig.~\ref{fig:combined_cases}) and~\eqref{eq:limit_edge} that if $x(0)\in \rint E_{i\sigma(i)}$, then $\lim_{t\rightarrow\infty}x(t)=e_{\sigma(i)}\neq e_i$. 
    We now prove instability of~$x\in \rint E_{12}$. By item~(3) of Lemma~\ref{lem:face_lem}, 
    we have that for any $x(0)\in \rint F_{-3}$, $\lim_{t\rightarrow \infty}x(t)=e_4\notin \rint E_{12}$.
\end{proof}

\begin{figure*}[!t]
\centering
% ---------- Row 1 ----------
\begin{subfigure}[t]{0.33\textwidth}
  \centering
  \includegraphics[width=\linewidth]{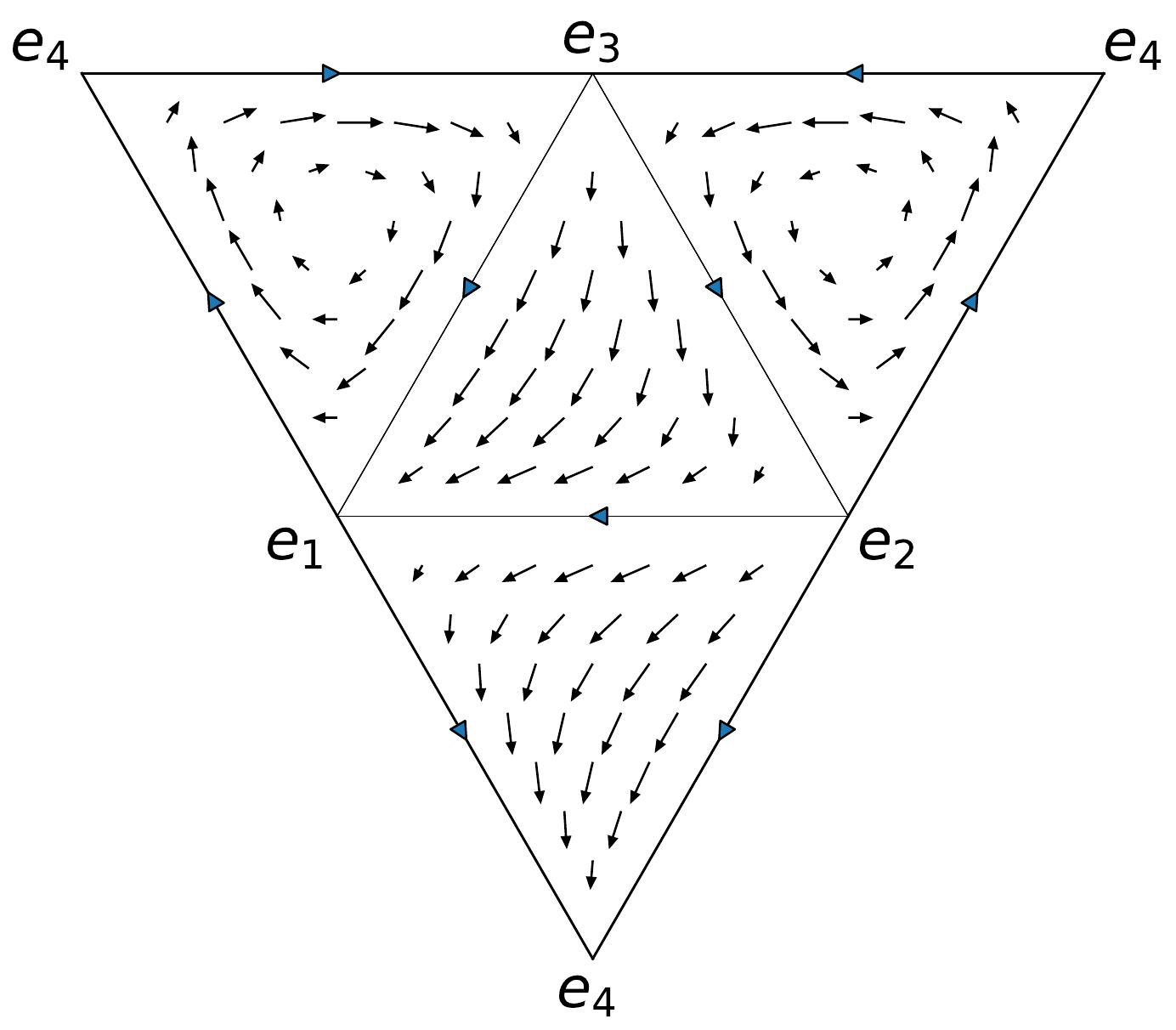}
\caption{~Phase portraits for case~(\ref{subfig:I})}
  % \label{subfig:case_a}
\end{subfigure}\hfill
\begin{subfigure}[t]{0.33\textwidth}
  \centering
  \includegraphics[width=\linewidth]{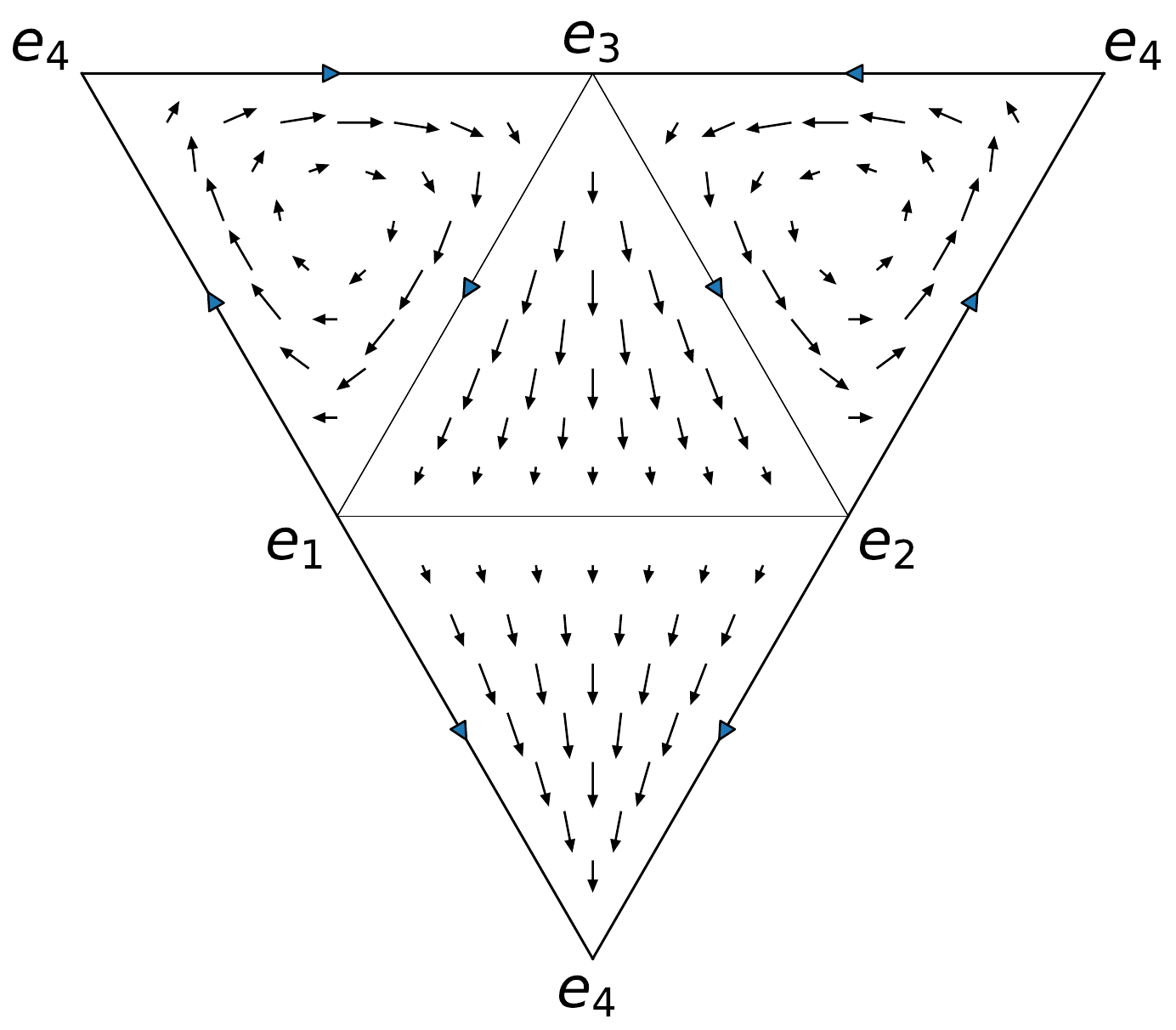}
  \caption{~Phase portraits for case~(\ref{subfig:II})}
  % \label{subfig:case_b}
\end{subfigure}
\hfill
% ---------- Row 2 ----------
\begin{subfigure}[t]{0.33\textwidth}
  \centering
  \includegraphics[width=\linewidth]{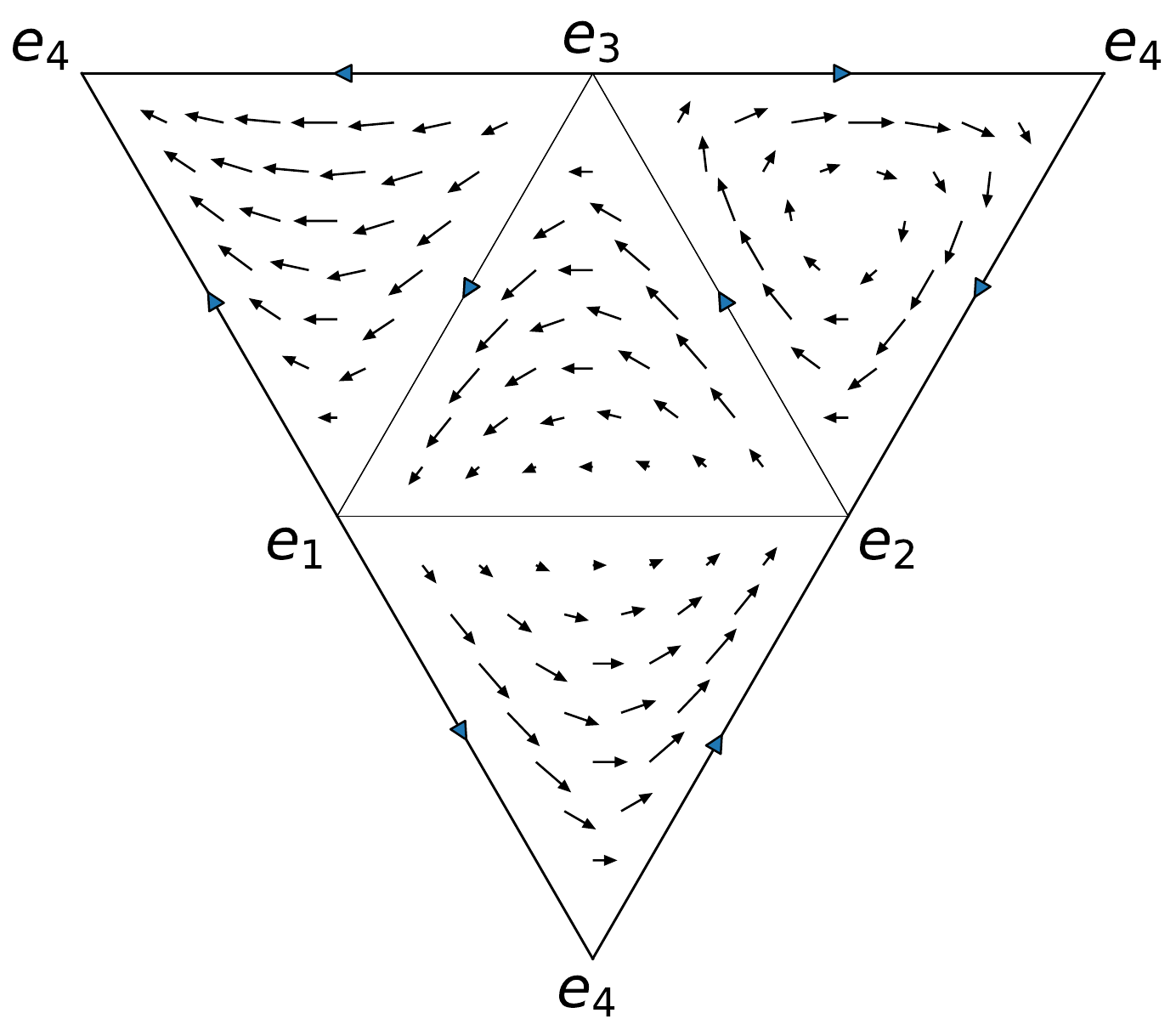}
  \caption{~Phase portraits for case~(\ref{subfig:III})}
  % \label{subfig:case_c}
\end{subfigure}

\vspace{6pt}

% ---------- Row 3 ----------
\begin{subfigure}[t]{0.33\textwidth}
  \centering
  \includegraphics[width=\linewidth]{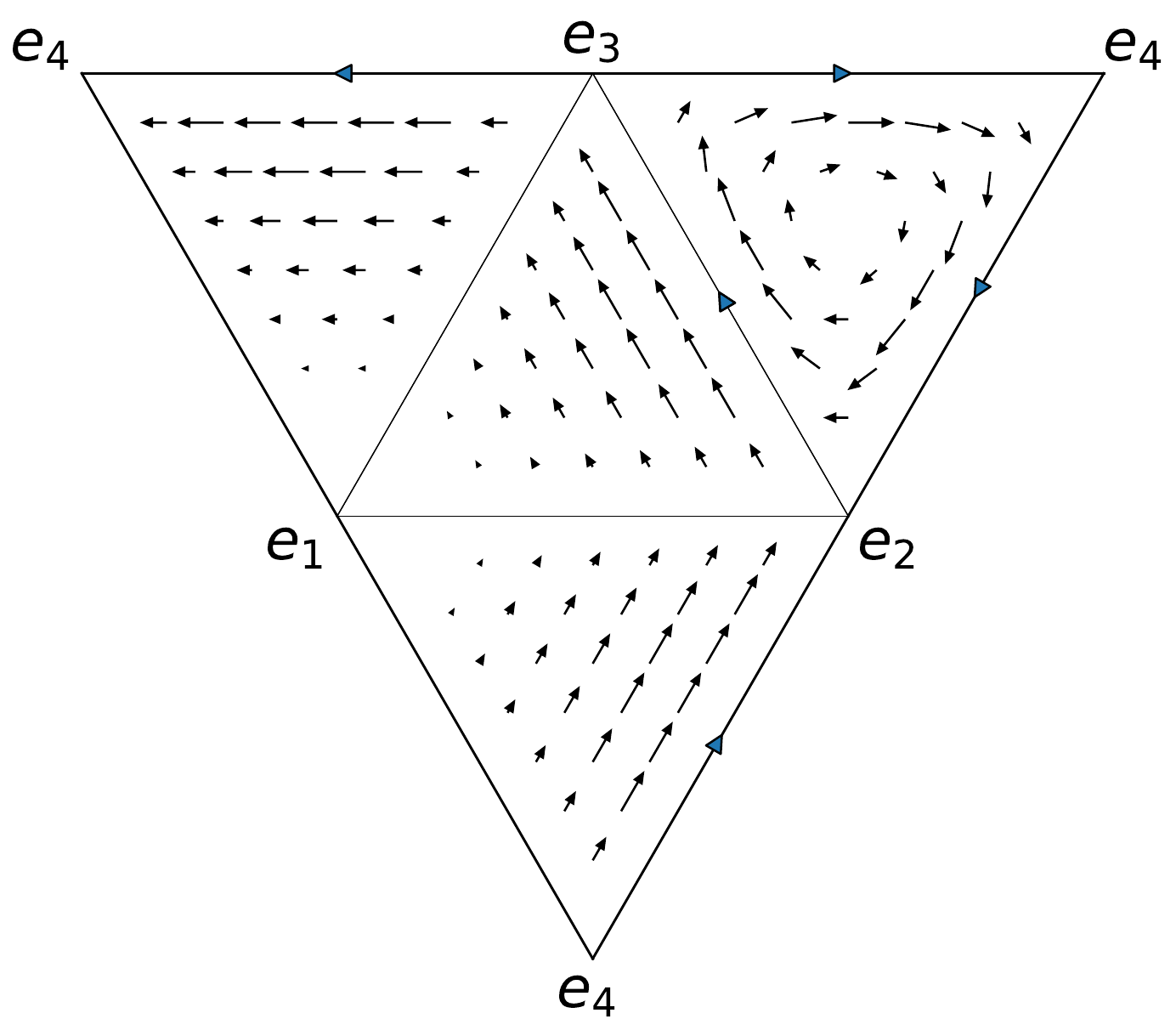}
  \caption{~Phase portraits for case~(\ref{subfig:IV})}
  % \label{subfig:case_d}
\end{subfigure}\quad 
\begin{subfigure}[t]{0.33\textwidth}
  \centering
  \includegraphics[width=\linewidth]{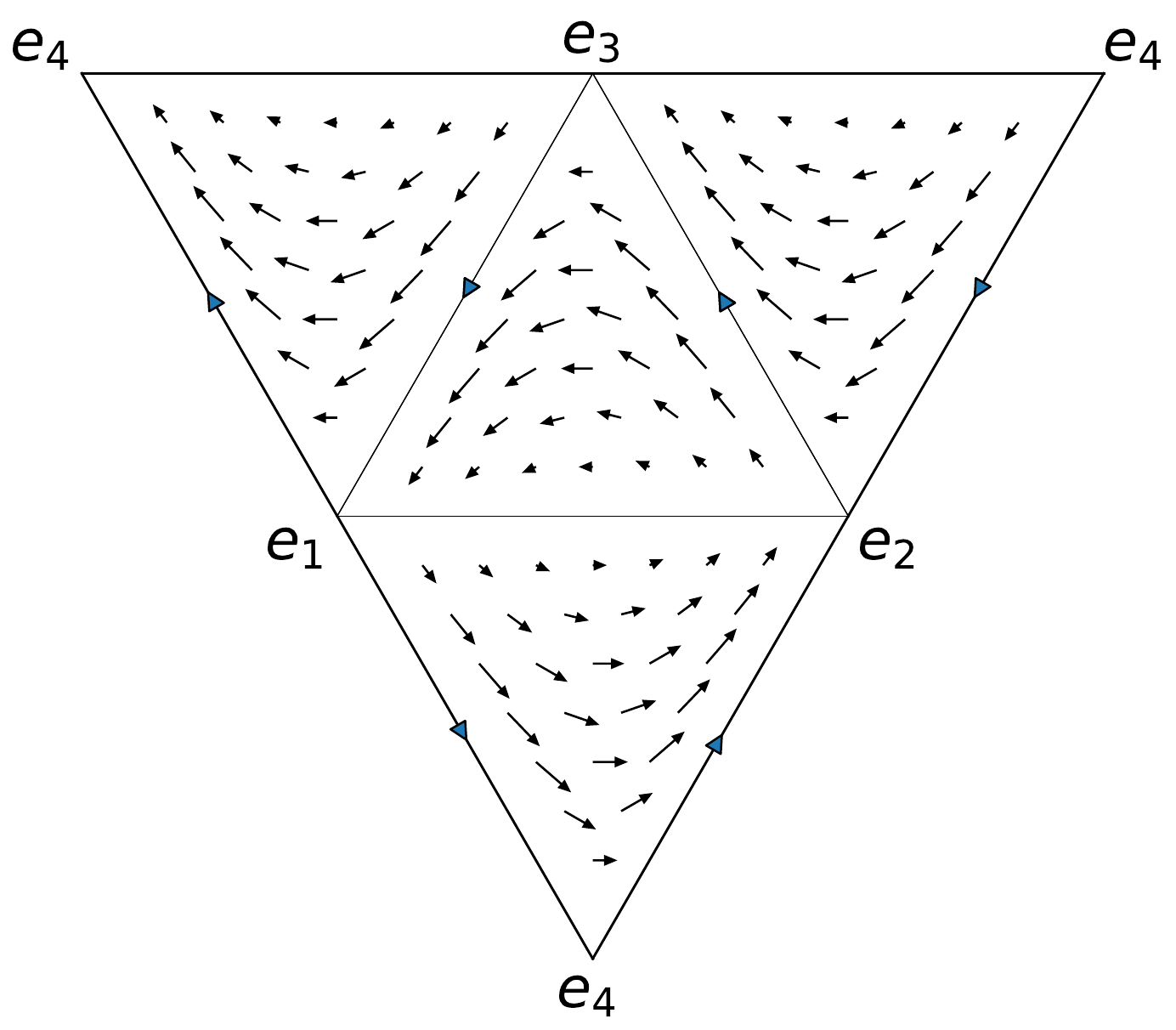}
  \caption{~Phase portraits for case~(\ref{subfig:V})}
  % \label{subfig:case_e}
\end{subfigure}
\caption{Phase portraits for all five classes.}
\label{fig:phase_portraits}
\end{figure*}

% \subsection{Phase Portraits of Case~(\ref{subfig:III})}
% \label{prop:case_13}

\begin{proposition}
\label{prop:iii}
Let $\mathcal{G}_A$ be the digraph in~(\ref{subfig:III}) of Fig.~\ref{fig:combined_cases}. The following items hold:

\begin{enumerate}
    \item 
    In this case, $L$ intersects $\partial \Delta^3$ inside $F_{-1}$ and $E_{12}$. 
    The set of equilibria of dynamics~\eqref{eq:zsreplicator} in $\partial\Delta^3$ is given by $$E_{12}\cup \{e_3,e_4\}\cup (L\cap \partial \Delta^3).$$  
    \item Each $x\in \{e_3,e_4\}\cup (E_{12}-L)$ is unstable. Each $x\in L\cap \partial \Delta^3$ is stable but not asymptotically stable.
    \item The following hold on the faces of $\Delta^3$:
    \begin{enumerate}
        \item Any $x \in \rint F_{-1}-L$ belongs to a stable periodic orbit.
        \item For any $x(0) \in \rint F_{-2}$, $\lim_{t\rightarrow \infty}x(t)=e_4$.
        \item  For any $x(0) \in \rint F_{-3}$, there exists a unique $z\in \rint E_{12}$ with $a_{41} z_1+a_{42} z_2<0$, such that $\lim_{t\to\infty} x(t)=z$.
        \item For any $x(0) \in \rint F_{-4}$, there exists a unique $z\in \rint E_{12}$ with $a_{31} z_1+a_{32} z_2<0$, such that $\lim_{t\to\infty} x(t)=z$.
    \end{enumerate}
\end{enumerate}
\end{proposition}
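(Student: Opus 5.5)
Following the convention stated just before Case~(\ref{subfig:I}), the equilibria in item~1 follow from solving $\diag(x)Ax=0$, and the face behaviour in item~3 follows from Lemma~\ref{lem:face_lem}. So the proof reduces to the instability claims in item~2. The claim about $L\cap\partial\Delta^3$ is exactly Lemma~\ref{lem:L_stability}. First I fix the sign pattern of Graph~(\ref{subfig:III}): the edges are $(2,4),(4,3),(3,2),(1,3),(4,1)$ (arrows point toward the tail as drawn), and $a_{12}=0$. Hence $a_{24},a_{43},a_{32},a_{13},a_{41}<0$. The induced subgraphs then are: a $3$-cycle on $\{2,3,4\}$ (face $F_{-1}$, type~(\ref{subfig:boundary-i})); the transitive graph on $\{1,3,4\}$ with sink $4$ (face $F_{-2}$, type~(\ref{subfig:boundary-ii})); a path or two-edge graph on $\{1,2,4\}$ and on $\{1,2,3\}$ (types~(\ref{subfig:boundary-iv}) or~(\ref{subfig:boundary-v})). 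These give items~3(a)--(d) together with the stated inequalities on $z$.

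\textbf{Vertices.} As in the previous propositions, I choose for each of $e_3,e_4$ an incident edge with $a_{i\sigma(i)}\neq 0$ along which, by~\eqref{eq:limit_edge}, trajectories converge to the other endpoint. Concretely, take $E_{32}$ for $e_3$: since $a_{32}<0$, points of $\rint E_{32}$ flow to $e_2$. Take $E_{43}$ for $e_4$: since $a_{43}<0$, points flow to $e_3$. Points arbitrarily close to $e_3$ (resp.\ $e_4$) therefore leave a fixed neighbourhood of it, which gives instability.

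\textbf{Points of $E_{12}-L$.} This is the main step. The key is to choose the right face, namely the one where trajectories starting arbitrarily close to $x\in E_{12}-L$ converge to a point of $E_{12}$ different from $x$, or leave entirely.
\begin{itemize}
\item For the vertices $e_1,e_2$ this is easy. Use $E_{14}$, where $a_{41}<0$ so the flow goes to $e_4$, and $E_{24}$, where $a_{24}<0$ so the flow goes to $e_4$.
\item For $x\in\rint E_{12}-L$, let $p$ be the unique point of $L\cap\rint E_{12}$. The linear form $a_{31}x_1+a_{32}x_2$ vanishes on $E_{12}$ exactly at $p$. This holds because $p$ satisfies $(Ap)_3=0$, as in~\eqref{eq:edge_kernel_condition}, and the form changes sign there since $a_{31}>0$ and $a_{32}<0$. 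The same holds for $a_{41}x_1+a_{42}x_2$.
\item Consequently, each point of $\rint E_{12}$ other than $p$ has a positive transversal eigenvalue, $(Ax)_3>0$ or $(Ax)_4>0$. It is therefore repelling in the direction of $x_3$ or of $x_4$.
\item Concretely, item~3(d) says limits from $\rint F_{-4}$ satisfy $a_{31}z_1+a_{32}z_2<0$. So if $(Ax)_3>0$, no trajectory in $\rint F_{-4}$ converges to $x$. Moreover, near $x$ we have $\dot x_3=x_3(Ax)_3>0$, so $x_3$ grows while the trajectory stays near $x$. The trajectory must therefore leave a small neighbourhood on which $(Ax)_3>\eta>0$. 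The case $(Ax)_4>0$ is symmetric, using $F_{-3}$ and item~3(c).
\end{itemize}
This yields instability. The main care needed is verifying the sign bookkeeping, namely that each $x\neq p$ on $\rint E_{12}$ indeed has a positive transversal rate.
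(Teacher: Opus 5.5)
Your architecture is the paper's. Items~1 and~3 are read off from $\diag(x)Ax=0$ and Lemma~\ref{lem:face_lem}, and $L\cap\partial\Delta^3$ is covered by Lemma~\ref{lem:L_stability}. Each vertex is destabilised by an incident edge along which the flow leaves it. $\rint E_{12}-L$ is split by which of $(Ax)_3$, $(Ax)_4$ is positive (the paper's $I_1,I_2$), and items~3(c)--(d) send nearby face trajectories to the far side of $p$. Your added estimate $x_3(t)\ge x_3(0)e^{\eta t}$ near $x$ is a valid and more explicit way to close that step.

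The gap is in the one place where this proof has real content, the sign bookkeeping: you have read Graph~(\ref{subfig:III}) backwards, so your matrix is $-A$. The arrowheads in Fig.~\ref{fig:combined_cases} mark edge heads, so the edges are $(1,4),(2,3),(3,1),(3,4),(4,2)$. That is, $a_{14},a_{23},a_{31},a_{34},a_{42}<0$ and $a_{12}=0$, which is what the paper's map $\sigma$ encodes and the orientation consistent with item~3(b). With your signs, $3$ would be the sink of the subgraph on $\{1,3,4\}$, and trajectories in $\rint F_{-2}$ would tend to $e_3$, not $e_4$. Your ``sink $4$'' contradicts your own edge list.

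The reversal breaks your vertex step. By \eqref{eq:limit_edge} with the correct signs, the flow on $\rint E_{23}$ goes to $e_3$, on $\rint E_{34}$ to $e_4$, and on $\rint E_{24}$ to $e_2$. So your witnesses $E_{32}$, $E_{43}$, $E_{24}$ say nothing about the instability of $e_3$, $e_4$, $e_2$. Your $E_{14}$ witness for $e_1$ reaches the right conclusion only because two errors cancel: with your own $a_{41}<0$, the flow on $E_{14}$ would go to $e_1$. The correct witnesses are the paper's $\sigma$: $E_{14}\to e_4$, $E_{23}\to e_3$, $E_{31}\to e_1$, and $E_{42}\to e_2$.

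In the edge step the reversal is harmless; in fact $a_{31}<0<a_{32}$ and $a_{41}>0>a_{42}$. However, your ``consequently'' needs more than each form changing sign at $p$: the two forms must have opposite signs on each side of $p$. This follows from \eqref{eq:edge_kernel_condition}. With $p=y(r)$, it gives $(Ay(s))_3=-a_{13}(s-r)/(1+s)$ and $(Ay(s))_4=-a_{14}(s-r)/(1+s)$, and $\sgn(a_{13})=-\sgn(a_{14})$.
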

\begin{proof}
    We show that any
    $x\in \{e_3,e_4\}\cup (E_{12}-L)$ is unstable. For each $i = 1,2,3,4$, define 
    $$\sigma(i):=
    \begin{cases}
     4 & \mbox{if } i = 1; \\
     3 & \mbox{if } i = 2; \\
     1 & \mbox{if } i = 3; \\
    2 & \mbox{if } i = 4. 
    \end{cases}
    $$ 
    It follows from the sign pattern of $A$ (represented by Graph~(\ref{subfig:III}) in Fig.~\ref{fig:combined_cases}) and~\eqref{eq:limit_edge} that if $x(0)\in \rint E_{i\sigma(i)}$, then $\lim_{t\to\infty} x(t) = e_{\sigma(i)} \neq e_i$. 

    We now consider the stability of any $x\in \rint E_{12} -L$. By Lemma~\ref{lem:4cycle} and~\eqref{eq:edge_kernel_condition}, $\rint E_{12} -L$ can be decomposed into two subsets as 
    % \begin{equation}
    \begin{align*}
        I_1& := \{ x \in\rint E_{12} \mid 
a_{31} x_1+a_{32} x_2>0\}, \\
I_2 & :=\{ x \in\rint E_{12} \mid 
a_{41} x_1+a_{42} x_2>0\}.
    \end{align*}
    % \end{equation}
    We first show that any $x\in I_1$ is unstable. 
    By item~(4) of Lemma~\ref{lem:face_lem}, we have that for any $x(0)\in \rint F_{-4}$ (which can be chosen arbitrarily close to the given~$x$), there exists a unique $z\in \rint E_{12}$ satisfying $a_{31} z_1+a_{32} z_2<0$ such that $\lim_{t\to\infty} x(t)=z$. Since $x\neq z$, it follows that $x$ is unstable. 
    The arguments for proving instability of $x\in I_2$ are similar to those above. 
    Using again item~(4) of Lemma~\ref{lem:face_lem}, we have that for any $x(0)\in \rint F_{-3}$, there exists a unique $z\in \rint E_{12}$ satisfying $a_{41} z_1+a_{42} z_2<0$, such that $\lim_{t\to\infty} x(t)=z$. 
    This completes our proof.
\end{proof}

\begin{proposition}
Let $\mathcal{G}_A$ be the digraph in~(\ref{subfig:IV}) of Fig.~\ref{fig:combined_cases}. The following items hold:
\begin{enumerate}
    \item In this case, $L$ intersects $\partial \Delta^3$ inside $F_{-1}$ and at $e_1$. 
    The set of equilibria of dynamics~\eqref{eq:zsreplicator} in $\partial \Delta^3$ is given by
    $$(L\cap \partial \Delta^3)\cup E_{12}\cup E_{13}\cup E_{14}.$$ 
    \item Each $x\in E_{12}\cup E_{13}\cup E_{14}-\{e_1\}$ is unstable. Each $x\in L\cap \partial\Delta^3$ is stable but not asymptotically stable. 
    \item The following hold on the faces of $\Delta^3$:
    \begin{enumerate}
        \item Any $x \in \rint F_{-1}-L$ belongs to a stable periodic orbit.
        \item For any $x(0) \in \rint F_{-2}$, there exists a unique $z\in \rint E_{14}$, with $z_1=x_1(0)$, such that $
        \lim_{t\to\infty} x(t)=z$.
        \item For any $x(0) \in \rint F_{-3}$, there exists a unique $z\in \rint E_{12}$, with $z_1=x_1(0)$, such that $
        \lim_{t\to\infty} x(t)=z$.
        \item For any $x(0) \in \rint F_{-4}$, there exists a unique $z\in \rint E_{13}$, with $z_1=x_1(0)$, such that $
        \lim_{t\to\infty} x(t)=z$.
    \end{enumerate}
\end{enumerate}
\end{proposition}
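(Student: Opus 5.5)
Following the convention announced before Proposition~1 (equilibria come from $\diag(x)Ax=0$, face dynamics from Lemma~\ref{lem:face_lem}), the proof reduces to the instability claim of item~2. The stability of points of $L\cap\partial\Delta^3$ is Lemma~\ref{lem:L_stability}.

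First fix the sign pattern of Graph~(\ref{subfig:IV}). The only edges form the $3$-cycle on $\{2,3,4\}$, so $a_{12}=a_{13}=a_{14}=0$. Hence $e_1$ is isolated in $\mathcal{G}_A$ and $e_1\in L$, matching item~3 of Theorem~\ref{thm:fineclass}. Consequently each of the edges $E_{12},E_{13},E_{14}$ consists entirely of equilibria, which confirms item~1. The face classes are as follows: $F_{-1}$ has type~(\ref{subfig:boundary-i}), and each $F_{-i}$ with $i\in\{2,3,4\}$ has type~(\ref{subfig:boundary-vi}), with the single edge lying between the two vertices other than~$1$. This justifies item~3 through items~1 and~6 of Lemma~\ref{lem:face_lem}; one only has to check that the labeling puts the limit point on the stated edge. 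For instance, in $F_{-2}$ the edge is $4\to3$, and the limit lies on $E_{14}$, consistent with~\eqref{eq:limit_edge}.

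For instability, I will mimic the argument of Proposition~\ref{prop:iii}: show that every neighborhood of a given equilibrium contains initial conditions whose trajectories converge to a point at positive distance from it. Take first $x\in\rint E_{12}$, so $x=(x_1,x_2,0,0)$ with $x_1,x_2>0$. Initial points $y(0)\in\rint F_{-4}$ arbitrarily close to $x$ exist, and by item~3(d) of the proposition (item~6 of Lemma~\ref{lem:face_lem}) their trajectories converge to $z\in\rint E_{13}$ with $z_1=y_1(0)\approx x_1$. Then $z_2=0$ while $x_2>0$, so $\|z-x\|\ge x_2$. Since $x_2>0$ is fixed and $y(0)$ can be taken arbitrarily close to $x$, this is instability. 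By the cyclic symmetry $2\to3\to4\to2$, points of $\rint E_{13}$ are handled using $F_{-2}$ (limits on $E_{14}$), and points of $\rint E_{14}$ using $F_{-3}$ (limits on $E_{12}$).

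The remaining points are the vertices $e_2,e_3,e_4$, where the coordinate used above vanishes. Here I will use the $3$-cycle edges and~\eqref{eq:limit_edge}, as in Proposition~1, with $\sigma(2),\sigma(3),\sigma(4)$ chosen so that $e_{\sigma(i)}$ is the head of the cycle edge leaving~$i$. Then trajectories starting in $\rint E_{i\sigma(i)}$ arbitrarily near $e_i$ converge to $e_{\sigma(i)}\neq e_i$. This completes the instability claim.

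The main obstacle is checking that the cyclic orientation actually gives the limits of item~3 on the correct edges. In particular, the limit in $F_{-i}$ must be on the edge joining $e_1$ to the sink of the single edge in that face, and that sink must match the labeling. The argument for $\rint E_{1k}$ relies on this, together with the conserved coordinate $z_1=x_1(0)$.
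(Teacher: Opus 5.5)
Your proof is correct and is essentially the paper's argument. Each point of $E_{1i}-\{e_1\}$ is destabilized by initial conditions in the face containing $E_{1i}$ whose single edge is $i\to\sigma(i)$; by item~6 of Lemma~\ref{lem:face_lem} these converge to $E_{1\sigma(i)}$, with the same map $\sigma(2)=3$, $\sigma(3)=4$, $\sigma(4)=2$. The paper lets this face argument cover $e_2,e_3,e_4$ as well (e.g.\ $\|z-e_2\|\ge 1$ for $z\in E_{13}$), so your separate edge-based vertex step is valid but not needed.

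One slip to fix: in $F_{-2}$ the edge is $3\to4$, i.e.\ $a_{34}<0$. It is this orientation that drives $x_3\to 0$ and puts the limit on $E_{14}$; an edge $4\to3$ would instead give a limit on $E_{13}$.
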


\begin{proof}
We show that any
    $x\in E_{12}\cup E_{13}\cup E_{14}-\{e_1\}$ is unstable. For each $i\in \{2,3,4\}$, define $$\sigma(i):=
    \begin{cases}
     3 & \mbox{if } i = 2; \\
     4 & \mbox{if } i = 3; \\
     2 & \mbox{if } i = 4. 
    \end{cases}
    $$ 
    It then follows from the sign pattern of $A$ (represented by Graph~(\ref{subfig:IV}) in Fig.~\ref{fig:combined_cases}) and item~(6) of Lemma~\ref{lem:face_lem} that if $x(0)\in\rint F_{-i}$, then there exists a unique $z\in E_{1\sigma(i)}$ such that $\lim_{t\rightarrow\infty}x(t)=z\notin E_{1i}$. 
\end{proof}

\begin{proposition}
    Let $\mathcal{G}_A$ be the digraph in~(\ref{subfig:V}) of Fig.~\ref{fig:combined_cases}. The following items hold:
    \begin{enumerate}
        \item 
        In this case, $L$ intersects $\partial \Delta^3$ inside $E_{12}$ and $E_{34}$.
        The set of equilibria of dynamics~\eqref{eq:zsreplicator} in $\partial \Delta^3$ is given by
        $$ E_{12}\cup E_{34}.$$ 
        \item Each $x\in E_{12}\cup E_{34}-L$ is unstable. Each $x\in L\cap \partial\Delta^3$ is stable but not asymptotically stable.
        \item The following hold on the faces of $\Delta^3$:
        \begin{enumerate}
        \item For any $x(0) \in \rint F_{-1}$, there exists a unique $z\in \rint E_{34}$ with  $a_{23}z_3+a_{24}z_4<0$, such that $\lim_{t\to\infty} x(t)=z$.
        \item For any $x(0) \in \rint F_{-2}$, there exists a unique $z\in \rint E_{34}$ with $a_{13}z_3+a_{14}z_4<0$, such that $\lim_{t\to\infty} x(t)=z$.
        \item For any $x(0) \in \rint F_{-3}$, there exists a unique $z\in \rint E_{12}$ with $a_{41}z_1+a_{42}z_2<0$, such that $\lim_{t\to\infty} x(t)=z$.
        \item For any $x(0) \in \rint F_{-4}$, there exists a unique $z\in \rint E_{12}$ with $a_{31}z_1+a_{32}z_2<0$, such that $\lim_{t\to\infty} x(t)=z$. 
        \end{enumerate}
    \end{enumerate}
\end{proposition}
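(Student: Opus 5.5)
Following the convention stated before the propositions, the description of the boundary equilibria and of the face dynamics will be read off from $\diag(x)Ax=0$ and from Lemma~\ref{lem:face_lem}. The substantive work is the instability claim in item~2; the stability of $L\cap\partial\Delta^3$ is already covered by Lemma~\ref{lem:L_stability}.

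\emph{Sign pattern.} For the $4$-cycle~(\ref{subfig:V}) with $a_{12}=a_{34}=0$ we may write
$$\sgn(a_{13})=\sgn(a_{32})=\sgn(a_{24})=\sgn(a_{41})\neq 0.$$
By Lemma~\ref{lem:twoedge4cycle}, $L$ meets $\rint E_{12}$ and $\rint E_{34}$.

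\emph{Boundary equilibria.} Every point of $E_{12}\cup E_{34}$ is an equilibrium, because $a_{12}=a_{34}=0$. The edges $E_{13},E_{14},E_{23},E_{24}$ carry nonzero payoff entries, so by~\eqref{eq:limit_edge} they contain no interior equilibria. On each face $F_{-i}$ the induced subgraph is a directed path of type~(\ref{subfig:boundary-iv}). For instance, $F_{-1}$ has edges $3\to2\to4$, so $j=2$ and $\{k,\ell\}=\{3,4\}$. Since a path has no $3$-cycle, Lemma~\ref{lem:3cycle} shows there is no equilibrium in $\rint F_{-i}$.

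\emph{Face dynamics.} Item~4 of Lemma~\ref{lem:face_lem}, applied to each face with the appropriate labels, gives the limits stated in item~3. On $F_{-1}$ the middle vertex is $j=2$, so trajectories converge to points of $\rint E_{34}$ satisfying $a_{23}z_3+a_{24}z_4<0$. The same reading of the path on each of the other three faces yields the remaining subitems.

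\emph{Instability of $x\in E_{12}-L$.} If $x$ is a vertex $e_1$ or $e_2$, I use the edges: for example $e_1$ on $E_{13}$ or $E_{14}$, one of which flows away from $e_1$ by the sign pattern and~\eqref{eq:limit_edge}. Now let $x\in\rint E_{12}-L$, and let $y$ be the point where $L$ crosses $\rint E_{12}$. By~\eqref{eq:edge_kernel_condition}, the linear forms $a_{31}x_1+a_{32}x_2$ and $a_{41}x_1+a_{42}x_2$ vanish exactly at $y$. As in Proposition~\ref{prop:iii}, they have opposite signs on the two components $I_1,I_2$ of $\rint E_{12}-L$, and their signs are consistent with $\sgn(a_{31})=-\sgn(a_{32})$.

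If $x\in I_1$, so that $a_{31}x_1+a_{32}x_2>0$, pick initial points in $\rint F_{-4}$ arbitrarily close to $x$. By item~3(d) these trajectories converge to points $z$ with $a_{31}z_1+a_{32}z_2<0$, that is, to points on the other side of $y$. Hence they leave a fixed neighborhood of $x$: choose its radius smaller than $\dist(x,\{w\in E_{12}: a_{31}w_1+a_{32}w_2\le 0\})$. If $x\in I_2$, use item~3(c) on $F_{-3}$ instead. The set $E_{34}-L$ is handled symmetrically with faces $F_{-1}$ and $F_{-2}$.

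The main obstacle is the sign bookkeeping. One must check, using the $4$-cycle relations, that the halves $I_1$ and $I_2$ are exactly complementary, so that every nonequilibrium edge point is repelled through at least one adjacent face. This amounts to verifying that $a_{31}x_1+a_{32}x_2$ and $a_{41}x_1+a_{42}x_2$ have opposite signs off $y$. That follows from $a_{23}/a_{13}=a_{24}/a_{14}=-r$ together with $\sgn(a_{13})=-\sgn(a_{14})$.
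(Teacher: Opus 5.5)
Your proposal is correct and follows the paper's proof: you split $\rint E_{12}-L$ and $\rint E_{34}-L$ into the half-edges $I_1,\dots,I_4$, use item~4 of Lemma~\ref{lem:face_lem} on the two adjacent faces to show that each half is repelled across the point of $L$, and take the stability of $L\cap\partial\Delta^3$ from Lemma~\ref{lem:L_stability}. You also go slightly beyond the paper: you treat the vertices $e_1,\dots,e_4$ through the edge flows, which the paper's proof never mentions, and you fix a uniform $\varepsilon$ via the distance to the opposite closed half-edge, where the paper leaves this implicit. One small fix: Lemma~\ref{lem:3cycle} as stated concerns points of $L$, so to exclude equilibria in $\rint F_{-i}$ you should cite the row computation in its necessity part, or simply item~4 of Lemma~\ref{lem:face_lem}, rather than the lemma itself.
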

\begin{proof}
We need to show that any
$x\in E_{12}\cup E_{34}-L$ is unstable. 
Decompose $\rint E_{12} -L$ into the disjoint union of $I_1$ and $I_2$ as introduced in the proof of Proposition~\ref{prop:iii}. Then, by the same arguments in that proposition, we have that any $x\in \rint E_{12} -L$ is unstable.

% We first consider the stability of any $x\in \rint E_{12} -L$.

% By Lemma~\ref{} $\rint E_{12} -L$ can be decomposed into two sets as 
% \begin{align*}
%     I_1&:=\{x\in\rint E_{12}\mid
% a_{31}x_1+a_{32}x_2>0\},\\
% I_2&:=
% \{x\in\rint E_{12}\mid
% a_{41}x_1+a_{42}x_2>0\}.
% \end{align*}
%      by Lemma~\ref{lem:twoedge4cycle}. It then follows from item~(4) of Lemma~\ref{lem:face_lem} that 
%     for any $x(0)\in \rint F_{-3}$, there exists a unique $z\in \rint E_{12}$ satisfying $a_{41} z_1+a_{42} z_2<0$, such that $\lim_{t\to\infty} x(t)=z$. Therefore, any $x\in \rint E_{12}$ with $a_{41} z_1+a_{42} z_2>0$ is unstable. Moreover, for any $x(0)\in \rint F_{-4}$, there exists a unique $z\in \rint E_{12}$ satisfying $a_{31} z_1+a_{32} z_2<0$, such that $\lim_{t\to\infty} x(t)=z$. Therefore, any $x\in \rint E_{12}$ with $a_{31} z_1+a_{32} z_2>0$ is unstable.

Similarly, we can decompose $\rint E_{34} -L$ into two disjoint subsets as follows: 
\begin{align*}
I_3:=\{z\in\rint E_{34}\mid
a_{13}z_3+a_{14}z_4>0\},
\\
I_4:=\{z\in\rint E_{34}\mid
a_{23}z_3+a_{24}z_4>0\}. 
\end{align*} 
It follows from item~(4) of Lemma~\ref{lem:face_lem} that for any $x(0)\in \rint F_{-1}$, there exists a unique $z\in \rint E_{34}$ satisfying $a_{23}z_3+a_{24}z_4<0$, such that $\lim_{t\to\infty} x(t)=z$, which implies that any $x\in I_4$ is unstable. Also,  for any $x(0)\in \rint F_{-2}$, there exists a unique $z\in \rint E_{34}$ satisfying $a_{13}z_3+a_{14}z_4<0$, such that $\lim_{t\to\infty} x(t)=z$ and hence, any $x\in I_3$ is unstable. 
\end{proof}

}
    
    The phase portraits for the five cases (I)--(V) are shown in Fig.~\ref{fig:phase_portraits}.
    
    %There exists a $z\in E_{jk}$, with $j,k\in \{1,2,3,4\}\backslash \{i\}$, and $a_{ij}\neq 0,a_{ik}\neq 0$, such that
    % $\lim_{t\to\infty} x(t)=z$; see, e.g.,~\cite[case~33 in Fig.6]{Bomze1983}. {  add the description}
% \end{proposition}

% +++++++++

% Under condition \eqref{eq:assume_pf0}, $K$ is a (nonempty) closed line segment in $\Delta^3$. Specifically,
% $$
% K \cap \rint\Delta^3 = \nll(A) \cap \rint\Delta^3 = (z^-, z^+)
% $$
% where the endpoints $z^-, z^+ \in \partial\Delta^3$. Thus, $K = [z^-, z^+]$ and $K \cap \partial\Delta^3 = \{z^-, z^+\}$.

%------------------------------------------------------------
% TAC-style: degenerate cases (edge-point / vertex-point)
% No pivot assumption
%------------------------------------------------------------

% \subsection{Conditions}\label{subsec:degenerate_edge_vertex}

% \begin{Lemma}
%     If $\nll(A)\cap \rint(\Delta^3)\neq \varnothing$, then there exist two points $z^+,z^-$ such that $\nll(A)\cap \rint(\Delta^3)=\{z^-,z^+\}$.
% \end{Lemma}

% \vspace{-0.3em}
\section{Conclusion and Outlook}
\label{sec:conclusion}

This paper studies four-strategy conservative replicator dynamics induced by constant payoff matrices. We derived necessary and sufficient conditions for coexistence by relating the payoff matrix to an associated directed graph, which yields exactly five digraph isomorphism classes. This classification also determines the possible boundary phase portraits. Moreover, we rigorously prove that, under coexistence, every non-equilibrium trajectory in $\rint\Delta^3$ is a Lyapunov-stable periodic orbit. Together, these results provide a complete characterization of the phase portraits on \(\Delta^3\) and show how the global behavior is organized jointly by the skew-symmetric structure and the topology of the underlying digraph. In our future work, we will extend the present four-dimensional analysis toward a characterization of coexistence for conservative replicator dynamics in arbitrary even dimensions.

\pagebreak

\appendix

{ 
\section{Proof of Lemma~\ref{lem:face_lem}}
\label{app:boundary_proofs}
\begin{proof}
The periodic phase portrait in item~(1) appears in~\cite[Fig.~7]{Zeeman1980} and~\cite[Section 6]{BoonePiliouras2019WINE}, where a logarithmic first integral and the Poincar\'e--Bendixson theorem are used to prove that every non-equilibrium trajectory in the relative interior is a stable periodic orbit. For item~(2) and~(3), the Lyapunov function $V(x)=-x_j$ yields the convergence to $e_j$ by applying LaSalle's invariance principle, as in the standard Lyapunov analysis of replicator dynamics~\cite{HofbauerSigmund1998,Sandholm2010}.
 
It remains to identify the limiting point in items~(4)--(6). 

For item~(4), we have $a_{jk}>0$, $a_{j\ell}<0$, and $a_{k\ell}=0$. The dynamics restricted to $F_{-i}$ is 
\begin{equation*}
\begin{aligned}
\dot{x}_j&=x_j(a_{jk}x_k+a_{j\ell}x_\ell),\\ \dot{x}_k&=-a_{jk}x_jx_k<0, \\ \dot{x}_\ell&=-a_{j\ell}x_jx_\ell>0. 
\end{aligned}
\end{equation*}
Thus, $x_k(t)$ and $x_\ell(t)$ converge by monotonicity and boundedness. Since $x_j(t)=1-x_k(t)-x_\ell(t)$, $x_j(t)$ also converges. Moreover, $x_\ell(t)\geq x_\ell(0)>0$. If $\lim_{t\to\infty}x_j(t)>0$, then $\dot{x}_\ell(t)$ would be bounded away from zero for sufficiently large $t$, contradicting the convergence of $x_\ell(t)$. Hence, there exists a $z\in E_{k\ell}$ such that $ \lim_{t\rightarrow\infty}x(t)= z.$ 

We then consider the invariant function $$ J(x):=a_{j\ell}\log x_k-a_{jk}\log x_\ell $$ with $$ \dot{J} = a_{j\ell}\frac{\dot{x}_k}{x_k} -a_{jk}\frac{\dot{x}_\ell}{x_\ell} =0. $$ Since $x_\ell(t)\geq x_\ell(0)>0$, the invariance of $J$ rules out $x_k(t)\to0$. Thus, $z_k>0$ and $z_\ell>0$, which implies that $ z\in\rint E_{k\ell}. $ Moreover, we define $$ q(t):=a_{jk}x_k(t)+a_{j\ell}x_\ell(t), $$ such that $\dot{x}_j(t)=x_j(t)q(t)$. A direct computation gives $$ \dot{q}(t) = -x_j(t)\bigl(a_{jk}^2x_k(t)+a_{j\ell}^2x_\ell(t)\bigr)<0. $$ Hence, $q(t)$ is strictly decreasing. We show that its limit is negative. Suppose, to the contrary, that
$$
q_\infty:=\lim_{t\to\infty}q(t)\geq0.
$$
Since $q(t)$ is strictly decreasing, $q(t)>q_\infty\geq0$ for every $t\geq0$. Hence,
$$
\dot{x}_j(t)=x_j(t)q(t)>0,
$$
so $x_j(t)\geq x_j(0)>0$ for all $t\geq0$, contradicting $\lim_{t\rightarrow\infty}x_j(t)=0$. Therefore,
$$
a_{jk}z_k+a_{j\ell}z_\ell=q_\infty<0.
$$ Finally, parameterize $z\in \rint E_{k\ell}$ by $z_k=s$ and $z_\ell=1-s$, where $s\in(0,1)$. The restriction of $J$ to this edge satisfies $$ \frac{d}{ds}J(z(s)) = \frac{a_{jk}s+a_{j\ell}(1-s)}{s(1-s)}. $$ It is therefore strictly decreasing on the portion of $\rint E_{k\ell}$ satisfying $$ a_{jk}z_k+a_{j\ell}z_\ell<0. $$ Since the limiting point $z$ belongs to this portion and satisfies $J(z)=J(x(0))$, it is the unique point satisfying the constraints. This proves item~(4).

For item~(5), we have $a_{jk}<0$, $a_{j\ell}<0$, and $a_{k\ell}=0$. Recall the function
$$
q(t):=a_{jk}x_k(t)+a_{j\ell}x_\ell(t),
$$
for which $\dot{x}_j(t)=x_j(t)q(t)$ and, as computed in item~(4),
$$
\dot{q}(t)
=
-x_j(t)\bigl(a_{jk}^2x_k(t)+a_{j\ell}^2x_\ell(t)\bigr)<0.
$$
Since $q(0)<0$, we have $q(t)\leq q(0)<0$, and hence
$$
x_j(t)
=
x_j(0)\exp\left(\int_0^t q(s)\,ds\right)
\leq x_j(0)e^{q(0)t},
$$
which implies that $\lim_{t\to\infty}x_j(t)=0$. Moreover, since
$$
\dot{x}_k=-a_{jk}x_jx_k>0,
\qquad
\dot{x}_\ell=-a_{j\ell}x_jx_\ell>0,
$$
we have that $x_k(t)$ and $x_\ell(t)$ converge by monotonicity and boundedness. Therefore, there exists a $z\in\rint E_{k\ell}$ such that
$\lim_{t\to\infty}x(t)=z.$

Consider the same invariant function $J$ as in item~(4) and parameterize $\rint E_{k\ell}$ by $z_k=s$ and $z_\ell=1-s$, we have
$$
\frac{d}{ds}J(z(s))
=
\frac{a_{jk}s+a_{j\ell}(1-s)}{s(1-s)}<0.
$$
Hence, $J$ is strictly decreasing on $\rint E_{k\ell}$, so
$J(z)=J(x(0))$ uniquely determines the limiting point $z$.

For item~(6), we have $a_{j\ell}=a_{k\ell}=0$ and $a_{jk}<0$. Hence, $$ \dot{x}_\ell=0,\qquad \dot{x}_j=a_{jk}x_jx_k<0,\qquad \dot{x}_k=-a_{jk}x_jx_k>0. $$ Thus, $x_\ell(t)=x_\ell(0)$, while $x_j(t)$ and $x_k(t)$ converge by monotonicity. If $\lim_{t\to\infty}x_j(t)>0$, then, since $x_k(t)\geq x_k(0)>0$, $\dot{x}_j(t)$ would be bounded above by a negative constant for all sufficiently large $t$, contradicting the convergence of $x_j(t)$. Therefore, we have $ \lim_{t\to\infty}x_j(t)=0,$ which implies that $ \lim_{t\to\infty}x(t)=z\in\rint E_{k\ell}$ with $z$ satisfying  $z_\ell=x_\ell(0), z_k=1-x_\ell(0). $  This completes our proof.
\end{proof}
}

%\nocite{*}
\printbibliography
\end{document}